\documentclass[11pt,a4paper]{article}
\usepackage[utf8]{inputenc}
\usepackage[T1]{fontenc}
\usepackage[english]{babel}
\usepackage{amsmath,amssymb,amsthm,mathtools}
\usepackage[margin=2.8cm]{geometry}
\usepackage{enumitem}
\usepackage{hyperref}
\hypersetup{colorlinks=true,linkcolor=black,citecolor=blue,urlcolor=blue}
\theoremstyle{plain}
\newtheorem{theorem}{Theorem}[section]
\newtheorem{lemma}[theorem]{Lemma}
\newtheorem{proposition}[theorem]{Proposition}
\newtheorem{corollary}[theorem]{Corollary}
\theoremstyle{definition}
\newtheorem{definition}[theorem]{Definition}
\theoremstyle{remark}
\newtheorem{remark}[theorem]{Remark}
\newcommand{\RP}{\mathbb{RP}}
\newcommand{\PGL}{\mathrm{PGL}}
\newcommand{\rk}{\operatorname{rank}}
\newcommand{\tr}{\operatorname{tr}}
\title{Reflective projective billiards in the plane:\\
odd periods, period five, and metric rigidity}
\author{Jorge Lucas Gonz\'alez\thanks{Email: jorge.lucas.glez@gmail.com}\\
\small Independent researcher}
\date{August 2026}
\begin{document}
\maketitle
\begin{abstract}
We study polygonal projective billiards whose reflection law is constant on each
side.  We give a single explicit family with an open set of periodic trajectories
for every period at least three, including every odd period, and identify the
previously known centrally projective examples as a special parameter value.  For
period five we describe the full scalar-monodromy variety, prove its irreducibility,
and show that every configuration preserves a nonzero quadratic form. Under
the stated distinct-centre and non-collinearity hypotheses that form is
nondegenerate; this yields an
explicit unirational classification of the convex five-reflective locus.  We also
relate the definite members to spherical billiards and prove that, among analytic
simply connected constant-curvature surfaces with geodesic walls, only the
sphere can carry an open family of periodic trajectories. For general complete
analytic ambients with global wall reflections we prove compactness and a
common period for the geodesic flow.  The arguments reduce the billiard return
map to products of harmonic homologies and separate the algebraic closure condition
from the strict first-impact inequalities.
\end{abstract}

\noindent\textbf{Keywords.} Projective billiards; periodic trajectories; harmonic
homologies; character varieties; spherical billiards.

\medskip
\noindent\textbf{2020 Mathematics Subject Classification.}
37C83, 51N15, 14L30, 53A35.

\section{Introduction}
Ivrii's conjecture \cite{ivrii} predicts that periodic trajectories in an ordinary Euclidean
billiard have measure zero.  Projective billiards replace perpendicular reflection
by a harmonic projective involution and admit reflective tables, that is, tables
with an open set of trajectories of one fixed period.  Fierobe constructed the
right-spherical triangle and examples of every even period at least four
\cite{fie-ex,fie-tri}.  Problem~1 of \cite{opb} asks whether odd periods at least
five occur and whether further examples exist for a fixed period.

The first main result answers the existence question for every period.  The second
main result gives a complete algebraic and convex classification at period five.
The proofs use the matrix model
\(H=I-2OL/(LO)\) for a harmonic homology.  Scalar monodromy supplies closure for
all nearby rays, while strict inequalities for one witness orbit turn that
algebraic relation into a genuine open set of billiard trajectories.

The final part explains the metric meaning of the construction.  A definite
invariant conic gives a spherical billiard in projective coordinates.  Conversely,
a complete analytic Riemannian ambient carrying such an open family, with
global isometric wall reflections, has periodic geodesic flow and is compact.  This also resolves negatively the apparent
possibility of a reflective billiard contained in the Klein disc.

The main contributions of this paper are: an explicit convex family with a
primitive open family for every period $n\ge3$, including all odd periods; a
complete algebraic and convex classification of the five-reflective locus,
including its invariant conic and irreducibility; and a metric rigidity theorem
giving global periodicity of the geodesic flow, together with spherical
rigidity among simply connected constant-curvature surfaces.
The distinction between structural theorems, exact rational certificates, and
the previously known centrally projective examples is maintained throughout.

All symbolic certificates mentioned below concern explicit rational data only;
the structural results and classification proofs are analytic or algebraic.
\section{Setting}\label{sec:setting}

A \emph{projective billiard} \cite{tab} is a bounded domain
$\Omega\subset\mathbb{R}^2$ together with a \emph{transversal line field}
assigning to each $p\in\partial\Omega$ a line $L(p)$ through $p$ not tangent to
$\partial\Omega$; reflection is defined by requiring the incoming ray, the
outgoing ray, the tangent and $L(p)$ to have cross ratio $-1$.

Throughout this paper we restrict to \emph{central facet fields}: on each
facet the transversal lines belong to the pencil through one fixed projective
centre outside the supporting hyperplane. No reduction from arbitrary
transversal fields to this class is asserted. Statements about the period
spectrum use the stated convention on the facets visited by the witness.

\begin{definition}[Fierobe]\label{def:krefl}
$\Omega$ is \emph{$k$-reflective} if there are a $k$-periodic orbit
$(p_1,\dots,p_k)$ and an \textbf{open} $U_1\times U_2\subset(\partial\Omega)^2$
containing $(p_1,p_2)$ every point of which begins a $k$-periodic orbit.
\end{definition}

The open set is the whole content of the definition, and it is where the difficulty
lives: exhibiting one periodic orbit is easy, exhibiting a two-parameter family of
them is not.

\subsection{Reflection as a harmonic homology}

For a covector $L$ and a vector $O$ with $LO\neq0$ put $H=I-2\,OL/(LO)$. Then
$H^2=I$, $HO=-O$, $Hx=x$ for $Lx=0$, and $\det H=-1$.

\begin{proposition}\label{prop:law}
Let $p$ lie on $L$, let $\tau$ span $L$, $\nu=O-p$. The involution induced by $H$
on the pencil at $p$ sends $a\tau+b\nu$ to $a\tau-b\nu$; hence the cross ratio of
(incoming, outgoing; tangent, transversal) is $-1$. So reflection in the side with
transversal field the pencil through $O$ \emph{is} the harmonic homology with axis
the side and centre $O$.
\end{proposition}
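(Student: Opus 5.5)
We work in homogeneous coordinates on $\RP^2$, with points written as vectors $x\in\mathbb{R}^3$ and lines as covectors. The side's supporting line is the covector $L$, and $O$ is the centre with $LO\neq 0$. We write $\hat p=(p,1)$ for a point $p$ on the side, so $L\hat p=0$. A direction $w$ at $p$ corresponds to the point at infinity $(w,0)$, and a line through $p$ is the span of $\hat p$ and $(w,0)$.

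The first step is to compute the action of $H$ on the pencil at $p$. Since $L\hat p=0$, we have $H\hat p=\hat p$, so $H$ maps lines through $p$ to lines through $p$. This gives an involution on the pencil, because $H^2=I$. Write $\hat O=(O,1)$, normalising the centre to be affine; the case of a centre at infinity is handled the same way with $\nu$ the direction $O$ itself. The tangent direction is $\tau$, with $L(\tau,0)=0$. The transversal direction is $\nu=O-p$, and $(\nu,0)=\hat O-\hat p$.

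The second step is to evaluate $H$ on these two directions.
\begin{itemize}
\item $H(\tau,0)=(\tau,0)$, because $L(\tau,0)=0$.
\item $H(\nu,0)=H\hat O-H\hat p=-\hat O-\hat p=-(\nu,0)-2\hat p$.
\end{itemize}
For the line through $p$ in direction $a\tau+b\nu$, take the point $(a\tau+b\nu,0)$. Its image is $(a\tau-b\nu,0)-2b\hat p$. This lies on the line through $\hat p$ with direction $a\tau-b\nu$. The induced map on the pencil is therefore $[a\tau+b\nu]\mapsto[a\tau-b\nu]$, as claimed.

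The third step is the cross ratio. In the affine coordinate $t=b/a$ on the pencil, the tangent sits at $t=0$ and the transversal at $t=\infty$. The incoming line sits at $t$ and the outgoing line at $-t$. The cross ratio $(t,-t;0,\infty)$ equals $t/(-t)=-1$, which is the harmonic condition defining projective reflection.

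Finally, the harmonic condition determines the outgoing line uniquely, as the harmonic conjugate of the incoming line with respect to the tangent and $L(p)$. Since $L(p)$ is the line $p\hat O$ for every $p$ on the side, the reflection law on that side coincides with the projectivisation of $H$, restricted to pencils.

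The only potential obstacle is bookkeeping. One must check that $H$ carries the line, and not the vector, correctly: the image direction picks up a multiple of $\hat p$, which does not change the line. One must also handle a centre at infinity and confirm that incoming and outgoing are matched with the correct orientation; as unoriented lines this is automatic.
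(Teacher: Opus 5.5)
Your proof is correct and follows the paper's outline: you take the involution that $H$ induces on the pencil at $p$, identify it as $a\tau+b\nu\mapsto a\tau-b\nu$, and read off the cross ratio in the coordinate $b/a$. The one difference is how you get the middle step: you evaluate $H$ on $\hat p$, $(\tau,0)$ and $\hat O$, whereas the paper deduces it from the fact that an involution of the pencil fixing the two distinct lines $L$ and $pO$ must be $m\mapsto -m$; your computation also shows at once that the induced involution is not the identity, which the paper's fixed-point argument takes for granted.
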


\begin{proof}
$H$ fixes $p$ and is an involution, so induces an involution of the pencil at $p$
fixing the two distinct lines $L$ and $pO$; in the coordinate taking them to $0$
and $\infty$ it is $m\mapsto-m$. Parametrising by $(a:b)$ with direction
$a\tau+b\nu$, the tangent is $(1:0)$, the transversal $(0:1)$, and with
$[ij]=a_ib_j-a_jb_i$ the cross ratio of $(a:b),(a:-b),(1:0),(0:1)$ is
$[13][24]/([14][23])=-1$.
\end{proof}

Consequently, if $H_{j_k}\cdots H_{j_1}$ is scalar then \emph{every} line returns to
itself after the corresponding $k$ reflections. The converse is false: an isolated
trajectory may well close without the product being scalar, and it is precisely the
scalar case that produces an \emph{open set} of closed trajectories, which is what
Definition~\ref{def:krefl} demands. As $\det\prod H=(-1)^k$, for odd $k$ the
relevant value is $-I$, which is the identity of $\PGL_3$: there is no determinant
obstruction.

For the central fields considered here, an open family also forces scalar
monodromy. The initial lines of a regular family form an open subset of the
Grassmannian of projective lines. Their invariance under the fixed product
is an algebraic incidence condition; vanishing on an open subset forces
it on the entire Grassmannian. A linear map preserving every two-dimensional
vector subspace preserves every one-dimensional subspace, obtained as an
intersection of such subspaces in vector dimension at least three.
Applying this to basis vectors and their pairwise sums shows that the map
is scalar. This necessity concerns an open family; an isolated closed orbit
does not suffice.

\section{An openness criterion}

Everything below rests on this; it assumes no symmetry.

\begin{lemma}[Exit map]\label{lem:exit}
Let $\Omega$ be a convex polygon, $p$ interior to a side, $w$ pointing into
$\Omega$. The ray $p+sw$, $s>0$, meets $\partial\Omega$ in exactly one point $E(p,w)$, and
$E$ is continuous wherever $E(p,w)$ is not a vertex.
\end{lemma}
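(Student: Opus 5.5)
The plan is to use the description of a convex polygon as a finite intersection of closed half-planes, $\Omega=\{x:\ell_i(x)\ge 0,\ i=1,\dots,m\}$, with affine functions $\ell_i$ whose zero sets are the supporting lines of the sides. I will work with the closed polygon $\bar\Omega$, a compact convex set. The phrase "$w$ pointing into $\Omega$" will be read as follows: if $p$ lies in the interior of side $i_0$, then $d\ell_{i_0}(w)>0$.

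\textbf{Existence and uniqueness.} Put $f_i(s)=\ell_i(p+sw)$. Each $f_i$ is affine in $s$.

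For $i\neq i_0$, the point $p$ is interior to side $i_0$, so $\ell_i(p)>0$. For $i=i_0$ we have $f_{i_0}(0)=0$ and $f_{i_0}'>0$. Hence $p+sw\in\operatorname{int}\Omega$ for all small $s>0$.

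The set $S=\{s\ge0: p+sw\in\bar\Omega\}$ is a compact interval $[0,s^*]$, because $\bar\Omega$ is compact and convex. We have $s^*>0$ by the previous step. Set $E(p,w)=p+s^*w$; it lies in $\partial\Omega$, since points with $s$ slightly larger than $s^*$ lie outside $\bar\Omega$.

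For uniqueness, let $0<s<s^*$. The point $p+s^*w$ lies in $\bar\Omega$, and $p+\epsilon w$ lies in the interior for small $\epsilon$. The point $p+sw$ lies on the open segment between them, so it is interior by the standard fact that such segments lie in the interior of a convex set. For $s>s^*$ the point is outside $\bar\Omega$. So $E(p,w)$ is the only boundary point on the open ray.

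In terms of the $\ell_i$, $s^*=\min\{-\ell_i(p)/d\ell_i(w): d\ell_i(w)<0\}$, and this set of indices is nonempty since $\bar\Omega$ is bounded.

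\textbf{Continuity.} Suppose $E(p,w)$ is not a vertex. Then it lies in the relative interior of exactly one side $j$, with $j\ne i_0$. For every $i\ne j$ we have $\ell_i(E)>0$. Moreover $d\ell_j(w)<0$: it cannot vanish, since the ray would then lie in the line of side $j$ and $p$ would be on that line, contradicting $\ell_j(p)>0$.

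The minimum defining $s^*$ is therefore attained only at $i=j$. All other candidate indices give strictly larger ratios. The indices with $d\ell_i(w)\ge0$ need more care, since for nearby $w'$ they may become candidates. However, for such $i$ we have $\ell_i(p+sw)\ge \ell_i(p)>0$ on a compact range of $s$, so any ratio they contribute under a small perturbation is large.

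It follows that for $(p',w')$ near $(p,w)$, with $p'$ still interior to side $i_0$ and $d\ell_{i_0}(w')>0$, the exit time is
\[ s^*(p',w')=-\frac{\ell_j(p')}{d\ell_j(w')}, \]
which is continuous. Hence $E(p',w')=p'+s^*(p',w')\,w'$ is continuous at $(p,w)$.

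\textbf{Main obstacle.} The delicate step is the uniform treatment of the non-candidate indices, i.e.\ showing that no other side can "overtake" side $j$ under a small perturbation. I will handle it with a compactness bound: take $\delta>0$ with $\ell_i(E)\ge2\delta$ for all $i\ne j$, and $s$ in a compact neighbourhood of $s^*$. Then $\ell_i(p'+sw')>\delta$ there, for $(p',w')$ close to $(p,w)$, by uniform continuity.
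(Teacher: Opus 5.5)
Your proof is correct and follows essentially the paper's route: $s^*$ is the supremum of the ray parameter, uniqueness comes from convexity, and continuity comes from the explicit exit time $-\ell_j(p')/d\ell_j(w')$ (the paper's Cramer's-rule quotient), whose denominator does not vanish, together with the fact that nearby data still exit through side $j$. The only difference is in that last step: the paper gets it more cheaply, since the perturbed intersection with the line of side $j$ stays in the open interior of that side and is then the exit point by the uniqueness already proved, whereas you check directly that no other side's ratio can overtake, which is also valid.
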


\begin{proof}
$\Omega$ is convex and bounded, so $s^\ast=\sup\{s>0:p+sw\in\Omega\}$ is finite and
$E(p,w)=p+s^\ast w\in\partial\Omega$; uniqueness is convexity. If $E(p,w)$ is
interior to a side $a$ then $w$ is not parallel to its supporting line, so the
intersection determinant is nonzero and Cramer's rule gives $s^\ast$ near $(p,w)$
as a quotient of polynomials with nonvanishing denominator. Since the interior of
$a$ is open in $\partial\Omega$, nearby data still exit through $a$.
\end{proof}

\begin{remark}
Requiring nonzero intersection determinants with every supporting hyperplane is unnecessarily restrictive: a strict trajectory may be parallel to facets it never reaches. Continuity only requires transversality at the actual exit facet, together with persistence of the strict inequalities for the other facets.
\end{remark}

\begin{theorem}[Openness criterion]\label{thm:open}
Let $\Omega$ be a convex polygon with sides $a_0,\dots,a_{k-1}$ and transversal
field on $a_i$ the pencil through $O_i$, with $L_iO_i\neq0$. Assume
\textup{(H1)} $H_{k-1}\cdots H_0=cI$, $c\neq0$; \textup{(H2)} there is an orbit
with itinerary $0,1,\dots,k-1$ whose impacts are strictly interior to their sides,
with strictly positive step parameters, each losing side excluded by a strict
inequality and nonvanishing transversality at each impact; \textup{(H3)} the return
direction is $u_k=\kappa u_0$ with $\kappa>0$. Then $\Omega$ is $k$-reflective; if
the $k$ impacts lie on distinct sides the period is primitive.
\end{theorem}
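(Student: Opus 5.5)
The plan is a perturbation argument around the witness orbit from (H2), using Lemma~\ref{lem:exit} for continuity and (H1) for exact closure. Write the witness as $(p_0,\dots,p_{k-1})$ with $p_i$ interior to $a_i$ and directions $u_0,\dots,u_{k-1}$, so that $p_{i+1}=p_i+s_iu_i$ with $s_i>0$ (indices mod $k$). Take initial data $(q_0,q_1)$ near $(p_0,p_1)$ with $q_0\in a_0$ and $q_1\in a_1$, and set $w_0=q_1-q_0$. Define the trajectory inductively: $w_i$ is the reflected direction at $q_i$ in the affine chart, given by applying $H_i$ and dehomogenising through the tangent-plus-transversal decomposition of Proposition~\ref{prop:law}; then $q_{i+1}=E(q_i,w_i)$.

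\textbf{Step 1 (the combinatorics persist).} At the witness, every exit lands interior to the intended side $a_{i+1}$, with a nonzero intersection determinant there. Every other side is excluded by a strict inequality, meaning its intersection parameter is either negative, larger than $s_i$, or the line is parallel and never meets it, as in the Remark. All of these conditions are open. The reflected direction at $q_i$ depends continuously on $(q_i,w_i)$ because $L_iO_i\neq0$ and transversality holds at the impact. Lemma~\ref{lem:exit} then gives continuity of $q_{i+1}$ and $w_{i+1}$. By induction over the $k$ steps, there is a neighbourhood $U_0\times U_1$ of $(p_0,p_1)$ on which:
\begin{enumerate}[label=(\roman*)]
\item the itinerary is still $0,1,\dots,k-1$;
\item every impact $q_i$ is interior to $a_i$;
\item every step parameter is positive;
\item the return direction satisfies $w_k=\kappa' w_0$ with $\kappa'>0$, by continuity from (H3).
\end{enumerate}

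\textbf{Step 2 (algebraic closure).} By Proposition~\ref{prop:law}, the projective line through $q_i$ and $q_{i+1}$ is the image under $H_i$ of the line through $q_{i-1}$ and $q_i$. Hence the line after $k$ reflections is the image of the initial line $\ell=q_0q_1$ under $H_{k-1}\cdots H_0=cI$, which is $\ell$ itself.

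\textbf{Step 3 (from a closed line to a closed orbit).} The point $q_k$ lies on $\ell$ and on side $a_0$, since the last impact before returning is on $a_0$ by the itinerary. The line $\ell$ meets the supporting line of $a_0$ in exactly one point, because it is transversal to it: it is not contained in it, as $q_1\notin a_0$. Therefore $q_k=q_0$. Moreover $w_k$ is parallel to $w_0$ with positive factor, so the next chord is again $q_0q_1$, and the orbit is $k$-periodic. This holds for every point of $U_0\times U_1$, which gives $k$-reflectivity.

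\textbf{Step 4 (primitivity).} If the sides $a_0,\dots,a_{k-1}$ are distinct, a period $d<k$ dividing $k$ would force $q_d=q_0$. But $q_d$ lies interior to $a_d\neq a_0$, and distinct sides of a convex polygon share no interior points, a contradiction.

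\textbf{Main obstacle.} The delicate point is Step 1. One must check that the reflected direction is a continuous function of the data. One must also check that the orientation of the reflected ray stays inward: $H_i$ acts only projectively, so the sign of the affine direction has to be tracked. I would handle this by noting that the inward choice is forced by the exit map being defined with positive step parameter. That condition is open at the witness by (H2), and the sign of $\kappa$ in (H3) records exactly this orientation consistency at the return.
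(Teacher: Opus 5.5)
Your proposal is correct and follows essentially the same route as the paper: the itinerary persists because the rational step maps are continuous (with Lemma~\ref{lem:exit}), the line returns because the product is scalar, $q_k=q_0$ because $\ell$ meets the supporting line of $a_0$ only once, the return factor stays positive by continuity from (H3), and primitivity follows from the sides being distinct. The only thing to tidy is the order of Step~1(iv): parallelism $w_k\parallel w_0$ comes from Step~2, where in your indexing the product actually applied is the cyclic conjugate $H_0H_{k-1}\cdots H_1$, also scalar; continuity supplies only the sign, e.g.\ via $\kappa'=(w_k\cdot w_0)/(w_0\cdot w_0)$ as the paper does.
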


\begin{proof}
Each step is a line--line intersection followed by a reflection, so the impact
parameters and directions are rational functions of the initial data, with
denominators nonvanishing at the witness by (H2); the vanishing locus being closed,
they are defined and continuous nearby. The finitely many strict inequalities of
(H2) persist, and by Lemma~\ref{lem:exit} each of the $k$ exit maps is continuous
with value in the interior of a side, which is open in $\partial\Omega$; composing,
the itinerary is unchanged for nearby data. By (H1) the product is the identity of
$\PGL_3$, so \emph{every} line returns to itself: $\ell_k=\ell_0$ as an unoriented
line, and since $\ell_0$ is not the supporting line of $a_0$ the intersection
$\ell_0\cap a_0$ is a single point, whence $p_k=p_0$. For the orientation, (H1)
only gives $u_k=\kappa u_0$ with $\kappa$ real nonzero --- neither $\pm1$ nor a
priori locally constant --- but $\kappa=(u_k\cdot u_0)/(u_0\cdot u_0)$ is
continuous and nonvanishing and equals a positive number at the witness by (H3), so
$\kappa>0$ on a smaller neighbourhood. Finally, a proper period $d\mid k$, $0<d<k$,
would force $p_d=p_0$; but $p_d$ lies on side $a_d$ and $p_0$ on $a_0$, which are
distinct sides, and the impacts are interior to them, so $p_d\neq p_0$.
\end{proof}

Hypotheses (H2)--(H3) are finitely many strict inequalities and (H1) one identity,
so they can be certified exactly for explicit data. We use this twice.

\section{One family for every $n$}

Fix $n\ge3$, put $\alpha=\pi/n$, $u=\sin\alpha$, $v=\cos\alpha$, and let the
regular $n$-gon have vertices $V_j=(\cos2j\alpha,\sin2j\alpha,1)^\top$, side $i$
being $V_iV_{i+1}$ with supporting covector $L_i$ and midpoint $M_i$. Let
$\widehat M_i$ be the unit vector from the centre towards $M_i$ and set
\begin{equation}\label{eq:Oi}
  O_i=-\lambda\,\widehat M_i ,
\end{equation}
the transversal field on side $i$ being the pencil through $O_i$. No parity is
involved: $O_i$ is simply a point of the line joining the centre to $M_i$. Note
that $\lambda$ parametrises the \emph{ansatz}, not a continuum of $n$-reflective
billiards: for each $n$ only the finitely many values $\lambda(n,j)$ of
Theorem~\ref{thm:main} give $n$-periodicity.

\begin{remark}
For $n$ odd, $\widehat M_i$ is antipodal to the vertex direction $V_{i+m}$ with
$m=(n+1)/2$ --- the unique solution of $2m\equiv1\pmod n$ --- so \eqref{eq:Oi}
reads $O_i=\lambda V_{i+m}$, ``the opposite vertex pushed out by $\lambda$''. For
$\lambda=0$ all $O_i$ are the centre, which is Fierobe's centrally projective
construction, defined for every $n$. Formulation \eqref{eq:Oi} covers both.
\end{remark}

From $V_0\times V_1$, using $\sin2\alpha=2uv$ and $\cos2\alpha-1=-2u^2$, one gets
$L_0\propto(-v,-u,v)$, while $\widehat M_0=(v,u)$ gives
$O_0=(-\lambda v,-\lambda u,1)^\top$ and
\begin{equation}\label{eq:LO}
  L_0O_0=\lambda+v .
\end{equation}
Let $R$ be the rotation by $2\alpha$, so $R^n=I$ and $RO_i=O_{i+1}$.

\begin{lemma}\label{lem:tel}
$H_i=R^iH_0R^{-i}$, and with $W=H_0R^{-1}$ one has
$P:=H_{n-1}\cdots H_0=R^{-1}W^nR$.
\end{lemma}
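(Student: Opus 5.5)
First I will establish the conjugation formula $H_i=R^iH_0R^{-i}$. Here $R$ is the rotation by $2\alpha$ acting on homogeneous coordinates, so it fixes the third coordinate and is orthogonal on the first two. It sends $V_j$ to $V_{j+1}$, so it maps side $i$ onto side $i+1$. Covectors transform by right multiplication with $R^{-1}$, which gives $L_{i+1}\propto L_iR^{-1}$ and hence $L_i\propto L_0R^{-i}$. On the other side, $O_i=R^iO_0$ by \eqref{eq:Oi}, because $\widehat M_{i}=R^i\widehat M_0$ and the affine lift has third coordinate $1$. The formula $H=I-2OL/(LO)$ is invariant under rescaling $L$, so substituting gives
\[
R^iH_0R^{-i}=I-2\,\frac{R^iO_0L_0R^{-i}}{L_0R^{-i}R^iO_0}=I-2\,\frac{O_iL_i}{L_iO_i}=H_i .
\]
The denominator is nonzero because $L_iO_i=L_0O_0=\lambda+v$ by \eqref{eq:LO}. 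Both the lemma and the later results implicitly assume $\lambda\ne -v$.

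Second, I will telescope the product. Writing $H_i=R^iH_0R^{-i}$, we get
\[
P=H_{n-1}\cdots H_0=R^{n-1}H_0R^{-(n-1)}\,R^{n-2}H_0R^{-(n-2)}\cdots R\,H_0R^{-1}\,H_0 .
\]
Adjacent factors combine as $R^{-(j+1)}R^{j}=R^{-1}$, which leaves $P=R^{n-1}(H_0R^{-1})^{n-1}H_0$. Now I use $R^n=I$, so $R^{n-1}=R^{-1}$, and I rewrite the final factor as $H_0=(H_0R^{-1})R$. This gives $P=R^{-1}W^{n-1}\,W R=R^{-1}W^nR$.

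There is no real obstacle here. The only points that need care are the bookkeeping of indices in the telescoping product and the fact that covectors transform contravariantly, so that $L_i\propto L_0R^{-i}$ rather than $L_0R^{i}$. I will check the sign convention directly on one instance: $L_0R^{-1}$ should vanish on $RV_0=V_1$ and $RV_1=V_2$, and these are exactly the endpoints of side $1$.
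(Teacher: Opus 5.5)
Your proof is correct and follows the paper's argument. The conjugation formula comes from transporting axis and centre by $R^i$: you verify the contravariant rule $L_i\propto L_0R^{-i}$ explicitly, where the paper simply cites that the homology with axis $g(L)$ and centre $g(O)$ is $gHg^{-1}$. The collapse to $R^{n-1}(H_0R^{-1})^{n-1}H_0$, followed by $R^n=I$ and $H_0=WR$, is the same computation the paper carries out by induction on the number of factors.
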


\begin{proof}
The homology with axis $g(L)$ and centre $g(O)$ is $gHg^{-1}$; take $g=R^i$.
Induction gives $H_{k-1}\cdots H_0=R^{k-1}(H_0R^{-1})^{k-1}H_0$, since
$H_k\bigl(R^{k-1}(H_0R^{-1})^{k-1}H_0\bigr)=R^k(H_0R^{-1})^kH_0$. Take $k=n$, use
$R^n=I$ and $H_0=WR$.
\end{proof}

\begin{lemma}[Polarity]\label{lem:polar}
If $L=\mu\,O^\top Q$ for a nondegenerate $Q$ and $\mu\neq0$ \textup{(}i.e.\ $L$ is
the polar of $O$\textup{)}, then $H\in\mathrm{O}(Q)$.
\end{lemma}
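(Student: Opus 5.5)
We verify directly that $H^\top Q H = Q$. Here $O$ is a column vector, $L$ a row covector and $Q$ a symmetric nondegenerate matrix. Substituting $L=\mu O^\top Q$ into $H=I-2OL/(LO)$, the scalar $\mu$ cancels between numerator and denominator:
\[
H = I - \frac{2\,O O^\top Q}{O^\top Q O}.
\]
Here $O^\top Q O = LO/\mu \neq 0$ by the standing assumption $LO\neq 0$. So $H$ is the $Q$-orthogonal reflection in the vector $O$. Write $q = O^\top Q O$.

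The computation has two steps.

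\textbf{Transpose.} Since $Q$ is symmetric,
\[
H^\top = I - \frac{2\,Q O O^\top}{q}.
\]

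\textbf{Expand.} Multiply out, writing $S = QOO^\top Q$:
\[
H^\top Q H = Q - \frac{2}{q}\,S - \frac{2}{q}\,S + \frac{4}{q^2}\,Q O\,(O^\top Q O)\,O^\top Q .
\]
The last term equals $\frac{4}{q}\,S$, because the middle factor $O^\top Q O$ is the scalar $q$. The two middle terms contribute $-\frac{4}{q}\,S$, so everything cancels and $H^\top Q H = Q$. Hence $H \in \mathrm{O}(Q)$.

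An alternative, coordinate-free check uses the $Q$-orthogonal decomposition $\mathbb{R}^3 = \langle O\rangle \oplus \ker L$. This is a genuine direct sum because $LO \neq 0$. The kernel of $L = \mu O^\top Q$ is exactly the $Q$-orthogonal complement of $O$. The map $H$ acts as $-1$ on $\langle O\rangle$ and as $+1$ on $\ker L$ (both facts recorded in Section~\ref{sec:setting}). A linear map that is $\pm1$ on the summands of a $Q$-orthogonal decomposition preserves $Q$.

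There is no real obstacle. The only points needing care are that $Q$ is symmetric, so that $(O^\top Q)^\top = QO$, and that $O^\top Q O \neq 0$, which follows from $LO \neq 0$. Nondegeneracy of $Q$ is not actually used in the identity itself. It serves only so that $\mathrm{O}(Q)$ is the intended group and the polar of $O$ is a well-defined line.
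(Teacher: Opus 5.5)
Your proposal is correct and matches the paper's proof: you cancel $\mu$ to get $H=I-2OO^\top Q/(O^\top QO)$, transpose using the symmetry of $Q$, and expand $H^\top QH$ so that the two cross terms cancel against the term containing the scalar $O^\top QO$. Your coordinate-free check via the $Q$-orthogonal splitting $\langle O\rangle\oplus\ker L$ is also sound, as is your remark that nondegeneracy of $Q$ is not needed for the identity itself; the paper makes the same remark later, in the proof of Theorem~\ref{thm:threshold}.
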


\begin{proof}
$\mu$ cancels: $H=I-2OO^\top Q/s$ with $s=O^\top QO$, so $H^\top=I-2QOO^\top/s$ and
$H^\top QH=Q-4QOO^\top Q/s+4QO\,s\,O^\top Q/s^2=Q$.
\end{proof}

\begin{lemma}\label{lem:ident}
With $c_2=\cos2\alpha$: \textup{(a)} $\det W=-1$; \textup{(b)}
$\tr W=(\lambda+\cos3\alpha)/(\lambda+\cos\alpha)$; \textup{(c)} $W$ preserves
$Q=\operatorname{diag}(1,1,\lambda v)$; \textup{(d)}
$\chi_W(t)=-(t+1)(t^2-(\tr W+1)t+1)$.
\end{lemma}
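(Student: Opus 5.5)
The four parts come from $W=H_0R^{-1}$, the explicit data $L_0\propto(-v,-u,v)$ and $O_0=(-\lambda v,-\lambda u,1)^\top$, and Lemma~\ref{lem:polar}. Throughout, $\lambda\neq -v$ so that $H_0$ is defined by \eqref{eq:LO}.

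For \textup{(a)}, $\det H_0=-1$ (Section~\ref{sec:setting}) and $\det R=1$, so $\det W=-1$.

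For \textup{(b)}, I expand
\[
\tr W=\tr R^{-1}-\frac{2\,L_0R^{-1}O_0}{L_0O_0}.
\]
$R$ is a rotation of the first two coordinates fixing the third, so $\tr R^{-1}=1+2c_2$. The vector $R^{-1}O_0$ has affine part $-\lambda$ times the unit vector at angle $\alpha-2\alpha=-\alpha$, that is $R^{-1}O_0=(-\lambda v,\lambda u,1)^\top$. Hence $L_0R^{-1}O_0=\lambda(v^2-u^2)+v=\lambda c_2+v$, and with \eqref{eq:LO}
\[
\tr W=\frac{(1+2c_2)(\lambda+v)-2\lambda c_2-2v}{\lambda+v}=\frac{\lambda+(2c_2v-v)}{\lambda+v}.
\]
The product formula gives $2\cos2\alpha\cos\alpha=\cos3\alpha+\cos\alpha$, so the numerator is $\lambda+\cos3\alpha$.

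For \textup{(c)}, take $\lambda\neq0$ first. Then $Q$ is nondegenerate and
\[
O_0^\top Q=(-\lambda v,-\lambda u,\lambda v)=\lambda(-v,-u,v)\propto L_0,
\]
so $L_0$ is the polar of $O_0$ and Lemma~\ref{lem:polar} gives $H_0\in\mathrm{O}(Q)$. $R$ acts as an orthogonal rotation on the first two coordinates and fixes the third, so it preserves $\operatorname{diag}(1,1,\ast)$. Therefore $W=H_0R^{-1}\in\mathrm{O}(Q)$. At $\lambda=0$ the entries of $W^\top QW-Q$ are rational in $\lambda$ with denominator $\lambda+v\neq0$, and they vanish for all small $\lambda\neq0$. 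By continuity they vanish at $\lambda=0$ as well, where $Q=\operatorname{diag}(1,1,0)$ is degenerate and the invariance is only a limiting statement.

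For \textup{(d)}, write $\chi_W(t)=\det(W-tI)=-t^3+Tt^2-E_2t+\det W$, with $T=\tr W$ and $E_2$ the sum of principal $2\times2$ minors. By Cramer/adjugate, $E_2=\tr\operatorname{adj}W=\det W\cdot\tr W^{-1}=-\tr W^{-1}$. For $\lambda\neq0$, $W\in\mathrm{O}(Q)$ with $Q$ nondegenerate gives $W^{-1}=Q^{-1}W^\top Q$, so $\tr W^{-1}=T$ and $E_2=-T$. Hence
\[
\chi_W(t)=-(t^3-Tt^2-Tt+1)=-(t+1)\bigl(t^2-(T+1)t+1\bigr),
\]
which one checks by expanding the right-hand side. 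Both sides are rational in $\lambda$ with the same denominator, so the identity extends to $\lambda=0$ by continuity.

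The only delicate point is the degenerate parameter $\lambda=0$ in \textup{(c)} and \textup{(d)}, where Lemma~\ref{lem:polar} does not apply and the continuity argument is required. Everything else is a direct computation.
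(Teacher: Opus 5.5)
Your proof is correct and follows the paper's route: (a) and (b) use the same determinant and rank-one trace computations, (c) uses Lemma~\ref{lem:polar} together with the fact that $R$ preserves $Q$, and (d) uses $W^{-1}=Q^{-1}W^\top Q$ and then extends to $\lambda=0$ by continuity. The differences are minor: you obtain (d) from the coefficient identity $E_2=\det W\cdot\tr W^{-1}=-\tr W$ rather than the paper's eigenvalue-pairing argument, and you also handle $\lambda=0$ in (c) by continuity, a case the paper's proof of (c) leaves implicit.
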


\begin{proof}
(a) $\det W=\det H_0\det R^{-1}=(-1)(1)$; no computation of $W$ is needed.

(b) $W=R^{-1}-\frac2D O_0L_0R^{-1}$ with $D=\lambda+v$, so
$\tr W=\tr R^{-1}-\frac2D\tr(O_0L_0R^{-1})$, and for a rank-one matrix
$\tr(O\,L\,M)=L\,M\,O$. With $L_0=(-v,-u,v)$ and $R^{-1}$ the rotation by
$-2\alpha$,
\[
  L_0R^{-1}=\bigl(v(4u^2-1),\,-u(4v^2-1),\,v\bigr),\qquad
  L_0R^{-1}O_0=\lambda(v^2-u^2)+v=\lambda c_2+v,
\]
the terms in $4\lambda u^2v^2$ cancelling. Since $\tr R^{-1}=2c_2+1$,
\[
  \tr W=(2c_2+1)-\frac{2(\lambda c_2+v)}{\lambda+v}=\frac{\lambda+2vc_2-v}{\lambda+v},
\]
and $2vc_2-v=v(4v^2-3)=\cos3\alpha$.

(c) By Lemma~\ref{lem:polar} it suffices that $L_0$ be the polar of $O_0$, which is
one line: $QO_0=(-\lambda v,-\lambda u,\lambda v)^\top=\lambda L_0^\top$; and $R$
preserves $Q$, being a rotation in $(x,y)$ fixing $z$.

(d) Suppose first $\lambda\neq0$, so that $Q$ is nondegenerate. Then
$W\in\mathrm{O}(Q)$ gives $W^{-1}=Q^{-1}W^\top Q$, similar to $W^\top$, so the
spectrum is invariant under $\mu\mapsto\mu^{-1}$; in dimension $3$ some eigenvalue
is fixed, hence $\pm1$, and $\det W=-1$ forces it to be $-1$. The other two are
roots of $t^2-\beta t+1$ with $\beta=\tr W+1$ by the trace.

For $\lambda=0$ the form $Q=\operatorname{diag}(1,1,0)$ is degenerate and this
argument does not apply. But (d) is an identity between polynomials in
$\lambda,u,v$ modulo $u^2+v^2-1$ --- both sides are polynomial in the entries of
$W$, which are rational in $\lambda$ with denominator $\lambda+v$, nonvanishing at
$\lambda=0$ --- and a polynomial identity valid on the Zariski-dense set
$\lambda\neq0$ holds identically. Hence (d) holds for every $\lambda$, including
the value $\lambda=0$ of Proposition~\ref{prop:fierobe}.
\end{proof}

\begin{remark}
The case $\lambda=0$ is worth isolating because it is exactly Fierobe's centrally
projective family (Proposition~\ref{prop:fierobe}): there all $O_i$ are the centre
of the polygon, the invariant ``conic'' degenerates to the doubled line at
infinity, and the underlying geometry is Euclidean rather than spherical or
hyperbolic.
\end{remark}

\begin{theorem}\label{thm:main}
Let $n\ge3$ and let $j$ be an integer with $1\le j<n/2$. Put
$\theta=(n-2j)\pi/n\in(0,\pi)$, $C=\cos\theta$ and
\begin{equation}\label{eq:lambda}
  \lambda=\lambda(n,j)=\frac{v\,(2v^2-1-C)}{C-1}.
\end{equation}
Then the projective billiard \eqref{eq:Oi} on the regular $n$-gon is
$n$-reflective, with primitive period $n$.
\end{theorem}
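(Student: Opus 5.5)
The plan is to apply the openness criterion, Theorem~\ref{thm:open}, with $k=n$ and the sides visited in cyclic order $0,1,\dots,n-1$. Hypothesis (H1) is algebraic. By Lemma~\ref{lem:tel}, $P=R^{-1}W^nR$, so it suffices to show that $W^n$ is scalar. By Lemma~\ref{lem:ident}(d), $W$ has eigenvalues $-1$ and the two roots of $t^2-\beta t+1$, where $\beta=\tr W+1$.

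First I will compute $\beta$ from the chosen $\lambda$. Substituting \eqref{eq:lambda} into Lemma~\ref{lem:ident}(b) should give $\beta=2\cos\theta'$ for a suitable angle. Since $\cos3\alpha=v(4v^2-3)$, a short computation gives
\[
\lambda+v=\frac{v(2v^2-2)}{C-1},\qquad \lambda+\cos3\alpha=\frac{v\bigl(2v^2-1-C+(4v^2-3)(C-1)\bigr)}{C-1}.
\]
I expect this to simplify to $\tr W=2C-1$, hence $\beta=2C$. The eigenvalues are then $-1$ and $e^{\pm i\theta}$ with $\theta\in(0,\pi)$, so they are distinct and $W$ is diagonalizable over $\mathbb{C}$. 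Then $W^n$ has eigenvalues $(-1)^n$ and $e^{\pm in\theta}=e^{\pm i(n-2j)\pi}=(-1)^n$. Therefore $W^n=(-1)^nI$, and $P=(-1)^nI$ is scalar. If the algebra instead produces $\beta=-2C$ or similar, I will adjust the identification of the rotation angle accordingly; the mechanism is the same. I also have to check that $\lambda+v\neq0$, which holds since $v\neq0,1$ and $C\neq1$.

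For (H2) and (H3) I will exhibit a witness orbit, and I will look for a symmetric one. Because $R$ conjugates $H_i$ to $H_{i+1}$ and maps the polygon to itself, consider a trajectory invariant under the combined symmetry "rotate by $2\alpha$". That is, take a chord $\ell_0$ from side $n-1$ to side $0$ such that $H_0\ell_0 = R\ell_0$, i.e.\ $\ell_0$ is a line fixed by $W$ (as a projective line). Lines fixed by $W$ correspond to eigen-covectors of $W^{-\top}$. The real eigenvalue $-1$ gives a real invariant line, namely the polar of the $-1$ eigenvector. Its $R$-orbit gives an orbit whose impacts are $p_i=R^ip_0$ on side $i$, and the direction rotates by $2\alpha$ at each step, so $u_n=u_0$ and $\kappa=1>0$ in (H3). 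This is the natural analogue of the regular star/polygon orbit: one should check that the invariant line is the segment joining points of consecutive sides symmetric about the vertex $V_0$, i.e.\ the chord joining $tV_{n-1}+(1-t)V_0$ and $(1-t)V_0+tV_1$.

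The main obstacle is verifying the strict inequalities. These are: the impact $p_0$ lies strictly inside side $0$, i.e.\ $0<t<1$; the ray from $p_0$ in direction $u_0$ actually reaches side $1$ before any other side, which by convexity and the short-chord shape is automatic once the endpoints are interior; the step parameter is positive; and the chord is not parallel to side $0$. If the $-1$-invariant line is not of this short form, I will instead follow the orbit numerically in $\theta$ and use the fact that $j$ indexes a star-polygon-type orbit turning by $\theta$. Any such explicit orbit invariant under the dihedral symmetry will have these inequalities reduced to a single inequality in $t$. I would verify that inequality using $C<1$ and the sign of $\lambda+v$. Primitivity then follows from the last clause of Theorem~\ref{thm:open}, since the $n$ impacts lie on the $n$ distinct sides.
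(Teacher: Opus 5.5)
Your treatment of (H1) is correct and is exactly the paper's: $\lambda+v=2vu^2/(1-C)\neq0$, $\tr W=2C-1$, the eigenvalues $-1,e^{\pm i\theta}$ are distinct, so $W^n=(-1)^nI$ and $P=(-1)^nI$.

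The gap is in (H2)--(H3). You never produce the witness orbit. The strict inequalities you yourself call ``the main obstacle'' are left unverified, and a numerical fallback cannot certify the statement for all $(n,j)$.

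Two points of the ansatz are also off:
\begin{enumerate}[label=\textup{(\alph*)},nosep]
\item A chord from $tV_{n-1}+(1-t)V_0$ to $(1-t)V_0+tV_1$ is compatible with $p_i=R^ip_0$ only when $t=1/2$.
\item The expected ``star-polygon-type orbit turning by $\theta$'' is a misconception: the witness does not depend on $j$, and $\theta$ is not its turning angle.
\end{enumerate}
There is also a small index slip: $H_0\ell_0=R\ell_0$ makes $R\ell_0$, not $\ell_0$, fixed by $W=H_0R^{-1}$. Finally, the ``polar of the $-1$ eigenvector'' description fails at $\lambda=0$, although the invariant line still exists there.

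The missing idea is the one the paper uses. The points $O_i=-\lambda\widehat M_i$, the centre and $M_i=\cos\alpha\,\widehat M_i$ all lie on the radius to $M_i$, which is perpendicular to side $i$. So the transversal at $M_i$ is the Euclidean normal, and by Proposition~\ref{prop:law} the reflection at $M_i$ is the optical one. Hence the midpoint polygon $M_0M_1\cdots M_{n-1}$, the classical periodic orbit of the regular $n$-gon, is a billiard orbit for every $\lambda$.

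In your language: $R^{-1}$ sends the line $M_0M_1$ to $M_{n-1}M_0$. Then $H_0$, acting on the pencil at $M_0$ as the Euclidean reflection in the normal, sends it back to $M_0M_1$. So $W$ fixes $M_0M_1$, and since $e^{\pm i\theta}\notin\mathbb{R}$ this is the \emph{unique} real $W$-invariant line. Your construction, carried through, would therefore have produced exactly the paper's orbit.

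With that identification both remaining hypotheses are immediate:
\begin{enumerate}[label=\textup{(\alph*)},nosep]
\item (H2): all impacts are at $t=1/2$. A chord between interior points of distinct sides has its interior in $\operatorname{int}\Omega$ by convexity, so the next side is hit first with every other side excluded strictly.
\item (H3): your rotational argument gives $\kappa=1$.
\end{enumerate}
At that point your route coincides with the paper's.
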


\begin{proof}
\emph{Algebraic closure.} First
$\lambda+v=v(2v^2-2)/(C-1)=2vu^2/(1-C)>0$ since $C\in(-1,1)$, $v>0$, $u\neq0$; so
by \eqref{eq:LO} the transversal field is admissible. Substituting
\eqref{eq:lambda} in Lemma~\ref{lem:ident}(b) gives $(\tr W+1)/2=C$, so by
Lemma~\ref{lem:ident}(d) the eigenvalues of $W$ are $-1$ and the roots of
$t^2-2Ct+1$, namely $e^{\pm i\theta}$; they are distinct from each other and from
$-1$, so $W$ is diagonalisable and, as $n\theta=(n-2j)\pi\equiv n\pi\pmod{2\pi}$,
\[
  W^n=\operatorname{diag}\bigl((-1)^n,e^{in\theta},e^{-in\theta}\bigr)=(-1)^nI ,
\]
which is the identity of $\PGL_3$ for either parity. Hence $P=(-1)^nI$ by
Lemma~\ref{lem:tel}.

\emph{The midpoint orbit.} By \eqref{eq:Oi} the points $O_i$, the centre and $M_i$
are collinear, and in a regular polygon the radius to the midpoint of a side is
perpendicular to it; so the transversal at $M_i$ is normal to side $i$, and there
the projective law is the optical one by Proposition~\ref{prop:law}. The midpoint
polygon is the classical periodic orbit of the Euclidean billiard in the regular
$n$-gon --- the perpendicular bisector of side $i$ is a symmetry axis fixing $M_i$
and swapping $M_{i-1},M_{i+1}$ --- and traversing it once returns the same
direction, so $\kappa=1$.

\emph{Admissibility.} All impacts are at $t=1/2$, strictly interior, so none is a
vertex. A polygon is not strictly convex, so we argue with convexity alone: $M_i$
and $M_{i+1}$ are interior to distinct sides, so the chord $[M_i,M_{i+1}]$ is not
contained in $\partial\Omega$; as $\Omega$ is convex with both endpoints on
$\partial\Omega$, its interior lies in the interior of $\Omega$, for otherwise
convexity would put the whole segment in the boundary. Hence $M_{i+1}$ is the first
boundary point met, every other side being excluded strictly.

Theorem~\ref{thm:open} now applies, the $n$ impacts lying on $n$ distinct sides.
\end{proof}

\begin{corollary}
There are $k$-reflective projective billiards for every $k\ge3$, of either parity;
in particular for every odd $k\ge5$.
\end{corollary}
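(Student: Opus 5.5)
The corollary should follow from Theorem~\ref{thm:main} once we choose, for each $k\ge3$, the polygon size and an admissible index. I will set $n=k$. The theorem needs an integer $j$ with $1\le j<n/2$, and $j=1$ works for every $n\ge3$ because $1<3/2\le n/2$.

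With $n=k$ and $j=1$, put $\theta=(k-2)\pi/k$. Define $\lambda(k,1)$ by \eqref{eq:lambda}. Theorem~\ref{thm:main} then says that the regular $k$-gon carrying the transversal field \eqref{eq:Oi} is $k$-reflective with primitive period $k$. Definition~\ref{def:krefl} asks for exactly this: a $k$-periodic orbit together with an open neighbourhood of periodic starting data. The primitivity statement ensures the open family genuinely has period $k$, and is not a lower period counted several times.

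The theorem places no parity restriction on $n$, which gives both parities at once. For odd $k$, the algebraic closure in the proof of Theorem~\ref{thm:main} yields $P=-I$. That matrix is still the identity of $\PGL_3$, consistent with the determinant remark after Proposition~\ref{prop:law}. In particular, every odd $k\ge5$ is covered.

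There is no real obstacle. The only points to check are that $j=1$ lies in the allowed range for $n=3$, and that $\theta\in(0,\pi)$ so $C\neq\pm1$ and $\lambda$ is defined. Both were already used inside the proof of the theorem. As a remark, taking other admissible $j$ gives further values $\lambda(k,j)$, and hence several distinct examples for a fixed period.
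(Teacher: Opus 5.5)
Your proposal is correct and matches the paper's argument, which is left implicit: the corollary follows directly from Theorem~\ref{thm:main} with $n=k$ and any admissible $j$, and $j=1$ is always admissible since $1<3/2\le k/2$. Your closing aside about several examples per period plays no role in the proof, and as stated it is too broad in two ways: it applies only for $k\ge5$ (for $k=3,4$ the only admissible value is $j=1$), and different values of $\lambda(k,j)$ would still need an inequivalence argument before they count as distinct billiards.
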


\begin{proposition}[Where Fierobe's family sits]\label{prop:fierobe}
$\lambda(n,j)=0$ if and only if $\theta=2\alpha$, i.e.\ $j=(n-2)/2$, which is an
integer if and only if $n$ is even. Thus the centrally projective construction is
the $\lambda=0$ member of the family exactly for even $n$, and for odd $n$ it does
not belong to the family.
\end{proposition}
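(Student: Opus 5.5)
The plan is a direct computation from \eqref{eq:lambda}, followed by a parity analysis of the resulting condition on $j$.

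\emph{Reduction to a condition on $C$.} Since $v=\cos\alpha>0$ and $C-1<0$ (because $\theta\in(0,\pi)$), formula \eqref{eq:lambda} shows that $\lambda(n,j)=0$ if and only if $2v^2-1-C=0$. Now $2v^2-1=\cos2\alpha$, so the condition reads $\cos\theta=\cos2\alpha$.

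\emph{Solving for $\theta$ and $j$.} Both $\theta$ and $2\alpha=2\pi/n$ lie in $(0,\pi)$ when $n\ge3$, and cosine is injective on $(0,\pi)$. Hence $\cos\theta=\cos2\alpha$ holds exactly when $\theta=2\alpha$. Substituting $\theta=(n-2j)\pi/n$, this is $n-2j=2$, that is, $j=(n-2)/2$. This $j$ is an integer if and only if $n$ is even. When it is an integer, it also lies in the admissible range $1\le j<n/2$, provided $n\ge4$. For $n=3$ there is nothing to check, since $n$ is odd.

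\emph{Conclusion and link to Fierobe's construction.} For even $n$ the value $\lambda=0$ is therefore attained, namely at $j=(n-2)/2$. By the Remark after \eqref{eq:Oi}, $\lambda=0$ places every $O_i$ at the centre, which is precisely the centrally projective construction. So for even $n$ that construction is the $\lambda=0$ member of the family.

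For odd $n$, every member of the family has $\lambda\neq0$. The construction at $\lambda=0$ has all centres at the polygon's centre, whereas for $\lambda\neq0$ the centres $O_i=-\lambda\widehat M_i$ are distinct from the centre. Hence the centrally projective billiard is not among the billiards of Theorem~\ref{thm:main} when $n$ is odd.

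There is no serious obstacle. The only point requiring care is the injectivity of cosine, which uses the ranges $\theta\in(0,\pi)$ and $2\alpha\in(0,\pi)$, and the latter needs $n\ge3$.
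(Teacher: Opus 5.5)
Your proof is correct and follows the paper's argument: reduce $\lambda=0$ to $\cos\theta=\cos2\alpha$, use injectivity of cosine on $(0,\pi)$ to get $\theta=2\alpha$, hence $j=(n-2)/2$. Your extra checks — that $v>0$ and $C-1<0$ so the numerator alone decides, that this $j$ lies in the admissible range, and that the centres sit off the polygon's centre when $\lambda\neq0$ — are details the paper leaves implicit.
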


\begin{proof}
$\lambda=0$ forces $2v^2-1=C$, i.e.\ $\cos2\alpha=\cos\theta$ with both angles in
$(0,\pi)$, hence $\theta=2\alpha$ and $(n-2j)/n=2/n$.
\end{proof}

\begin{remark}
This is exactly \cite[Prop.~1.5]{fie-ex}: for odd $n$ the centre is not an
$n$-reflective member, and the construction yields period $2n$ instead. The
invariant conic is $x^2+y^2=-\lambda\cos(\pi/n)z^2$: these are billiards of
constant curvature in projective coordinates, and which curvature is decided by
the sign of $\lambda$, as follows.
\end{remark}

\begin{proposition}[Which geometry, and where the mirrors sit]\label{prop:whichgeom}
Since $C<1$, the sign of $\lambda(n,j)$ is that of $C-\cos2\alpha$, so
\[
  \lambda>0\iff\theta<2\alpha\iff j>\tfrac{n-2}2 .
\]
With $1\le j<n/2$ an integer this gives: for $n$ odd, exactly one member with
$\lambda>0$, namely $j=(n-1)/2$; for $n$ even, none, and $j=(n-2)/2$ is the
Euclidean $\lambda=0$ member of Proposition~\ref{prop:fierobe}; every other member
has $\lambda<0$. Moreover, whenever $\lambda<0$ the invariant conic is a real
circle of radius$^2=-\lambda\cos\alpha$, and
\[
  -\lambda<\cos\alpha
  \iff\frac{\cos2\alpha-C}{1-C}<1
  \iff\cos2\alpha<1,
\]
which always holds; so radius$^2<\cos^2\alpha$, the conic lies strictly inside the
inscribed circle of the polygon, and \textbf{no mirror meets it}. Sharper, with
$u=\sin\alpha$, $v=\cos\alpha$:
\[
  -\lambda<v^3
  \iff \cos2\alpha-v^2<C\,(1-v^2)
  \iff -u^2<C\,u^2
  \iff C>-1,
\]
also always true; so radius$^2<v^4$, which is the inradius$^2$ of the midpoint
orbit, and the conic lies strictly inside the region enclosed by \textbf{every
orbit of the family}.
\end{proposition}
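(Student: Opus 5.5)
The plan is to verify each claim by direct sign analysis of formula \eqref{eq:lambda}, with every step an elementary trigonometric inequality.

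\emph{Sign of $\lambda$.} Write $2v^2-1=\cos2\alpha$, so \eqref{eq:lambda} reads $\lambda=v(\cos2\alpha-C)/(C-1)$. Since $v=\cos(\pi/n)>0$ and $C-1<0$ (because $\theta\in(0,\pi)$), the sign of $\lambda$ is that of $C-\cos2\alpha$. Cosine is strictly decreasing on $(0,\pi)$ and both $\theta$ and $2\alpha$ lie there ($n\ge3$), so $C>\cos2\alpha$ is equivalent to $\theta<2\alpha$. Substituting $\theta=(n-2j)\pi/n$ turns this into $n-2j<2$, i.e.\ $j>(n-2)/2$. Combined with the integrality constraint $1\le j<n/2$:
\begin{itemize}[label={}]
\item for $n$ odd the only integer in $((n-2)/2,n/2)$ is $(n-1)/2$;
\item for $n$ even the open interval $((n-2)/2,n/2)$ contains no integer, and the endpoint $j=(n-2)/2$ gives $\lambda=0$ by Proposition~\ref{prop:fierobe}.
\end{itemize}
All remaining $j$ have $\theta>2\alpha$, hence $\lambda<0$. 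I need $j\ge1$ so that $\theta<\pi$, which the hypothesis provides.

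\emph{The conic.} Lemma~\ref{lem:ident}(c) gives the invariant form $Q=\operatorname{diag}(1,1,\lambda v)$. The conic $x^2+y^2+\lambda v z^2=0$ is, in the affine chart $z=1$, the circle centred at the origin with radius$^2=-\lambda v$. It is real exactly when $\lambda<0$.

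\emph{Bound by the inradius.} Comparing with the inradius$^2=v^2$ of the regular $n$-gon (vertices on the unit circle, midpoints at distance $\cos\alpha$) amounts to showing $-\lambda<v$. I would expand $-\lambda=v(\cos2\alpha-C)/(1-C)$, divide by $v>0$, and multiply by $1-C>0$. The inequality becomes $\cos2\alpha-C<1-C$, i.e.\ $\cos2\alpha<1$, which holds since $2\alpha\in(0,\pi)$. Since the conic is a centred circle strictly inside the incircle, it misses every side.

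\emph{Bound by the midpoint orbit.} The midpoint orbit is the regular $n$-gon with vertices $M_i$, at distance $v$ from the centre. Its inradius is $v\cdot v=v^2$, so its inradius$^2$ is $v^4$, and the claim is $-\lambda v<v^4$, i.e.\ $-\lambda<v^3$. After multiplying by $(1-C)/v$ this becomes $\cos2\alpha-C<v^2(1-C)$. Using $\cos2\alpha-v^2=-u^2$ and $1-v^2=u^2$, this rearranges to $-u^2<Cu^2$, i.e.\ $C>-1$, true because $\theta<\pi$.

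\emph{Main obstacle.} The one non-algebraic point is that ``strictly inside the region enclosed by every orbit of the family'' needs more than the midpoint orbit alone. My plan is to use projective invariance rather than nearby-orbit estimates. Every orbit is a product of reflections in $\mathrm{O}(Q)$, so each trajectory line lies in a single $\mathrm{O}(Q)$-orbit class relative to the conic. The midpoint orbit's sides miss the conic, and for $\lambda<0$ the exterior lines of the circle form an open set preserved by each $H_i$. Hence all lines of trajectories near the witness also miss the conic, and since the conic is connected and contains the centre, it stays inside each polygonal orbit. I would keep this step brief and lean on Lemma~\ref{lem:polar}.
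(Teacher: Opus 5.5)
Your computations coincide with the paper's. The displayed chains of equivalences in the statement are its entire proof, and you reproduce them step for step: the sign of $\lambda$ via $C-\cos2\alpha$ and monotonicity of cosine, the integer count in $((n-2)/2,n/2)$, radius$^2=-\lambda v$ from $Q=\operatorname{diag}(1,1,\lambda v)$, and the two comparisons reducing to $\cos2\alpha<1$ and $C>-1$.

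The only difference is the last sentence. The paper reads ``strictly inside every orbit of the family'' directly off the inradius $v^2$ of the midpoint orbit and says nothing more. You add that each $H_i\in\mathrm{O}(Q)$ preserves the type (exterior, tangent, secant) of a line with respect to the conic, so one exterior line forces every line of that orbit to be exterior. That is a real gain. It reduces the check to a single line per orbit. It is also the ingredient one would need to extend the claim from a neighbourhood of the witness to a whole connected family; one would then still have to exclude orbits of the family tangent to the conic, which neither you nor the paper does.

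As you actually state it, however, the conclusion covers only trajectories near the witness. So it yields nothing beyond plain continuity of the $n$ segments. That is all that is needed, since the open set in Definition~\ref{def:krefl} may be shrunk.

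Two small repairs are needed:
\begin{itemize}
\item It is the closed disc bounded by the conic, not the conic itself, that contains the centre. A line missing the circle misses the whole disc.
\item You must also know that the centre is enclosed by the orbit polygon. Near the midpoint orbit this is continuity. For any orbit with itinerary $0,1,\dots,n-1$ it follows from a winding-number count: $\lambda<0$ forces $n\ge5$, so each segment from side $i$ to side $i+1$ avoids the centre and advances the polar angle by an amount in $(0,4\pi/n)\subset(0,\pi)$, and these amounts add up to $2\pi$.
\end{itemize}
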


\begin{remark}[The three geometries, stated carefully]\label{rem:geom}
For $\lambda>0$ the form is definite and the geometry is genuinely spherical, and
$\lambda=0$ is the Euclidean case. For $\lambda<0$ the form has signature $(2,1)$,
the group is $\mathrm{O}(2,1)$, and the absolute is a real conic --- and here one
must be careful, because the domain does \emph{not} sit inside one Cayley--Klein
component. By Proposition~\ref{prop:whichgeom} the conic lies strictly inside the
domain, so the domain \emph{straddles} the absolute: part of it is the Klein disc,
where the invariant metric is Riemannian of curvature $-1$, and part is the
exterior, where the metric is Lorentzian. No single metric makes the whole domain a
metric billiard, and the invariant metric degenerates along the conic, which is an
interior curve of the domain. So calling the $\lambda<0$ branch ``hyperbolic''
would be wrong; what is true, and all that is used anywhere in these notes, is that
a nondegenerate conic is preserved.

The orbits, on the other hand, do sit in one component: by the sharper inequality
of Proposition~\ref{prop:whichgeom} the conic lies strictly inside the hole enclosed
by every orbit of the family, so no orbit ever crosses the absolute and the
dynamics of the family is de Sitter even though the domain is not.

There is in fact no $5$-reflective projective billiard, among those
satisfying the hypotheses of Theorem~\ref{thm:global} --- convex pentagon,
pairwise distinct centres, no four centres collinear --- whose domain lies
\emph{inside} the Klein disc of its invariant conic.  Indeed, by that theorem
such a configuration preserves a nondegenerate conic and each centre is the pole
of its mirror, so a domain inside the Klein disc would be a hyperbolic billiard
with totally geodesic walls, and Theorem~\ref{thm:nohyp} below excludes
$k$-reflectivity for every $k$.  Thus the indefinite members above necessarily
straddle the absolute; none is a genuine hyperbolic-plane billiard.
\end{remark}

\begin{remark}
For $n=5$, $j=2$: $\lambda=(2+\sqrt5)/2$. The $\sqrt5$ is forced by the pentagon,
not by the construction: $\lambda(n,j)$ is a rational function of $\cos(\pi/n)$ and
so lies in $\mathbb{Q}(\cos(\pi/n))$, which for $n=5$ is $\mathbb{Q}(\sqrt5)$ and
for $n=7,9$ is a cubic field.
\end{remark}

\section{Five reflections: examples, moduli and classification}\label{sec:other}

\subsection{Imposing a conic makes the problem rational}

By Lemma~\ref{lem:polar}, if all $k$ homologies preserve a conic $Q$ then each
centre is the pole of its axis, so a single vector $n_i$ determines the mirror;
with $Q=x^2+y^2+z^2$ the homology is the ordinary reflection
$R_i=I-2n_in_i^\top/(n_i\!\cdot\!n_i)$, rational whenever $n_i$ is, and the closure
condition becomes a product of $k$ reflections in $\mathrm{O}(3)$ equal to $\pm I$.
For $k=5$ this can be solved rather than searched.

\begin{proposition}\label{prop:construct}
Let $n_0,n_1,n_2\in\mathbb{Q}^3$ with $B:=-R_2R_1R_0\neq I$, and let $w$ span
$\ker(B-I)$. Choose $n_4\in\mathbb{Q}^3$ with $n_4\cdot w=0$. Then $R_3:=BR_4$ is a
reflection with rational normal, and $R_4R_3R_2R_1R_0=-I$.
\end{proposition}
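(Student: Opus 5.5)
The plan is to show that $BR_4$ is itself a reflection by identifying $B$ as a rotation of $\mathbb{R}^3$ and then using the classical fact that a rotation is a product of two reflections in planes through its axis.

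\emph{Step 1: $B$ is a rotation with a rational axis.} Each $R_i$ is an orthogonal reflection in $\mathrm{O}(3)$ with rational entries and determinant $-1$. Hence $R_2R_1R_0$ has determinant $-1$, and $B=-R_2R_1R_0$ lies in $\mathrm{SO}(3)$ with rational entries. By Euler's theorem every element of $\mathrm{SO}(3)$ has eigenvalue $1$. Since $B\neq I$, $\ker(B-I)$ is exactly one-dimensional: a nonidentity rotation fixes only its axis. The matrix $B-I$ is rational, so this kernel is spanned by a rational vector $w$; we may take $w$ to be that vector, since any rational $n_4$ orthogonal to it will do.

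\emph{Step 2: $R_3:=BR_4$ is a reflection.} Since $n_4\cdot w=0$, $R_4$ fixes $w$, so $BR_4w=w$. Also $BR_4\in\mathrm{O}(3)$ with $\det(BR_4)=-1$. On the plane $w^\perp$, which is invariant under both $B$ and $R_4$, the map $B$ acts as a rotation and $R_4$ as a line reflection, because $n_4\in w^\perp$. Their product is therefore a reflection of the plane $w^\perp$ in some line. Together with $w\mapsto w$, this makes $BR_4$ an orthogonal involution of $\mathbb{R}^3$ with determinant $-1$. Its $(-1)$-eigenspace is one-dimensional, spanned by some $n_3\in w^\perp$, and $BR_4=I-2n_3n_3^\top/(n_3\cdot n_3)$.

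An alternative route for Step 2 is to check that $M=BR_4$ satisfies $M^2=I$ directly: $M$ is orthogonal with $\det M=-1$, fixes $w$, and restricts to $w^\perp$ as an orientation-reversing isometry of a plane, which is automatically an involution.

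\emph{Step 3: rationality of $n_3$.} The matrix $BR_4-I$ is rational of rank $1$, since it equals $-2n_3n_3^\top/(n_3\cdot n_3)$. Any nonzero column of it is a rational multiple of $n_3$, so that column can serve as a rational normal.

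\emph{Step 4: closure.} From $R_3=BR_4$ and $R_4^2=I$ we get $R_3R_4=B$. Therefore
\[
R_4R_3R_2R_1R_0 = R_4R_3(-B)=-R_4R_3R_4R_4B\cdot\ldots
\]
and it is cleaner to argue as follows. From $R_3=BR_4$, $B=R_3R_4$, so $-R_2R_1R_0=R_3R_4$, i.e.\ $R_4R_3R_2R_1R_0=-R_4R_3R_3R_4=-I$, using $R_3^2=R_4^2=I$.

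\emph{Main obstacle.} The step that needs care is Step 2, namely showing that the product of the rotation $B$ and $R_4$ is genuinely a reflection and not merely some element of determinant $-1$, such as a rotatory reflection. This is exactly where the hypothesis $n_4\perp w$ enters: it makes $w$ a common fixed vector, which reduces the question to the planar fact that a rotation composed with a line reflection is a line reflection.
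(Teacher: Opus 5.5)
Your proof is correct and follows the paper's argument step for step: $B\in\mathrm{SO}(3)$ with rational axis $w$, $R_4w=w$, $BR_4$ a reflection whose rational normal spans $\ker(BR_4+I)$ (your rank-one column), and closure from $R_3R_4=B$. The only difference is how you show $BR_4$ is a reflection: the paper derives $(BR_4)^2=I$ from $R_4BR_4=B^{-1}$ and then has to exclude $BR_4=-I$ separately, whereas your restriction to the invariant plane $w^\perp$ gives the eigenvalues $1,1,-1$ directly, with the fixed vector $w$ ruling out $-I$ automatically (delete the abandoned half-line at the start of Step~4).
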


\begin{proof}
$\det R_i=-1$, so $\det B=+1$ and $B$ is a rotation; $w\in\ker(B-I)$ is rational by
Gaussian elimination. From $n_4\cdot w=0$ we get $R_4w=w$, and a reflection whose
mirror plane contains the axis conjugates the rotation to its inverse:
$R_4BR_4=B^{-1}$. Hence $(BR_4)^2=B(R_4BR_4)=I$ and $\det(BR_4)=-1$. An
involution of $\mathrm{O}(3)$ of determinant $-1$ is either a reflection or $-I$,
and $BR_4=-I$ would give $B=-R_4$, whence
$\ker(B-I)=\ker(R_4+I)=\langle n_4\rangle$ and $n_4\cdot w=0$ would read
$n_4\cdot n_4=0$; so $R_3:=BR_4$
is a reflection, its normal spans $\ker(R_3+I)$ and is rational. Finally
$R_3R_4=B=-R_2R_1R_0$ gives the stated identity.
\end{proof}

\subsection{A second, inequivalent $5$-reflective billiard}

\begin{theorem}\label{thm:second}
The configuration $n_0=(1,0,-3)$, $n_1=(3,2,2)$, $n_2=(-1,0,1)$, $n_3=(-8,9,5)$,
$n_4=(0,-2,2)$ defines a $5$-reflective projective billiard, not projectively
equivalent to the $n=5$ member of Theorem~\ref{thm:main}.
\end{theorem}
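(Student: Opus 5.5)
The proof has three parts: algebraic closure, a polygon with a witness orbit, and an invariant separating it from the regular example.

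\emph{Algebraic closure.} With $Q=x^2+y^2+z^2$, each $n_i$ determines both the mirror (the line with covector $n_i^\top$, i.e.\ $L_i=n_i^\top$) and the centre $O_i=n_i$, its pole. By Lemma~\ref{lem:polar} the homology is then the orthogonal reflection $R_i=I-2n_in_i^\top/(n_i\cdot n_i)$. Condition (H1) of Theorem~\ref{thm:open} is the exact rational identity $R_4R_3R_2R_1R_0=-I$. I will verify it in either of two ways:
\begin{itemize}
\item directly, by multiplying five rational $3\times3$ matrices with denominators $10,17,2,170,8$;
\item more transparently, via Proposition~\ref{prop:construct}: compute $B=-R_2R_1R_0$, find its axis $w$, check $n_4\cdot w=0$, and check that $BR_4$ is the reflection with normal proportional to $(-8,9,5)$.
\end{itemize}
Admissibility $L_iO_i=n_i\cdot n_i\neq0$ is automatic, and the centres are visibly pairwise distinct.

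\emph{Polygon and witness.} Here one must choose the pentagon. The mirrors $n_i^\top x=0$ give five affine lines in the chart $z=1$, and the domain should be the convex pentagon they bound, with sides in cyclic order $a_0,\dots,a_4$. I will compute the five consecutive vertices $n_i\times n_{i+1}$, normalise them to $z=1$, and check strict convexity and the cyclic order by signs of $2\times2$ determinants. If these lines do not bound a pentagon in the given order, I will pass to an affine chart $\ell\cdot x=1$ for a suitable rational covector $\ell$. This is harmless, since projective equivalence preserves everything in Theorem~\ref{thm:open} except the choice of chart.

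Next I need a witness orbit satisfying (H2)--(H3). I will start from a rational point $p_0$ interior to $a_0$ and a rational direction $u_0$, and iterate the exit maps of Lemma~\ref{lem:exit} with exact arithmetic, reflecting by $R_i$ at each impact. Then I check that:
\begin{itemize}
\item every step parameter is positive;
\item each impact is strictly interior to its side, with the other sides excluded by strict inequalities;
\item the return direction satisfies $u_5=\kappa u_0$ with $\kappa>0$.
\end{itemize}
A natural first guess is a point near an eigen-configuration; failing that, I will search a small grid of rational data. I expect this step to be the main obstacle: closure is automatic, but the itinerary $0,1,2,3,4$ with correct first impacts and positive orientation is a genuinely open condition that must be located. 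Once found, it is a finite list of exact inequalities. The sides are distinct, so Theorem~\ref{thm:open} gives primitive period $5$.

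\emph{Inequivalence.} I will use projective invariants of the cyclically ordered configuration of mirrors and centres, which must agree under any equivalence up to dihedral relabelling. The simplest is the conjugacy class of the normalized monodromy pieces. Concretely, take $\tr$ of $R_{i+1}R_i$ (normalised by determinant) for each $i$. This equals $2\cos(2\phi_i)+1$, where $\phi_i$ is the angle between $n_i$ and $n_{i+1}$, and it is a projective invariant of the pair of homologies. For the regular $n=5$ member of Theorem~\ref{thm:main}, all five consecutive invariants coincide by rotational symmetry, and they lie in $\mathbb{Q}(\sqrt5)\setminus\mathbb{Q}$. For the rational configuration they are rational numbers, and already $(n_0\cdot n_1)^2/(|n_0|^2|n_1|^2)=9/170$ differs from $(n_1\cdot n_2)^2/(|n_1|^2|n_2|^2)=1/34$. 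Either observation rules out projective equivalence.
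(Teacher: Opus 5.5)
Your closure and polygon steps are the paper's, and they go through as planned. For instance, $w=(7,4,4)$ spans $\ker(B-I)$ and $n_4\cdot w=0$, so Proposition~\ref{prop:construct} applies. No change of chart is needed: in $z=1$ the vertices $n_i\times n_{i+1}$ are $(3,-\tfrac{11}{2})$, $(1,-\tfrac52)$, $(1,\tfrac13)$, $(\tfrac74,1)$, $(3,1)$, which form a convex pentagon with sides in the order $a_0,\dots,a_4$. For (H2)--(H3), however, you describe a search rather than a certificate. The proof is complete only once explicit witness data are written down and checked exactly, as the paper does with $t_0=t_1=1/14$.

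\emph{Your inequivalence step contains a false claim.} For the $n=5$ member, the \emph{adjacent} invariants are rational, not in $\mathbb{Q}(\sqrt5)\setminus\mathbb{Q}$.
\begin{itemize}
\item By Remark~\ref{rem:kaleido}, adjacent normals have Gram entry $\tfrac12$ in the metric of the invariant conic. So adjacent mirrors meet at angle $\pi/3$, and $\tr(H_{i+1}H_i)=4\cdot\tfrac14-1=0$.
\item Without the metric: $\tr(H_1H_0)=-1+4\,(L_1O_0)(L_0O_1)/\bigl((L_0O_0)(L_1O_1)\bigr)$. Here $L_1O_0=L_0O_1=\lambda c_2+v$ (cf.\ Lemma~\ref{lem:ident}(b)), and $(\lambda c_2+v)/(\lambda+v)=\tfrac12$ at $\lambda=(2+\sqrt5)/2$.
\item Irrational values occur only for non-adjacent pairs: $4\bigl(\tfrac{1-\sqrt5}{4}\bigr)^2-1=\tfrac{1-\sqrt5}{2}$.
\end{itemize}
So your first observation fails as stated. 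Your second observation is correct and suffices. An equivalence of billiards maps the pentagon onto the pentagon, hence adjacent sides to adjacent sides. It would therefore have to carry the constant cyclic sequence $(0,0,0,0,0)$ onto yours, up to dihedral relabelling. Yours is $\bigl(-\tfrac{67}{85},-\tfrac{15}{17},\tfrac{84}{85},-\tfrac{69}{85},\tfrac45\bigr)$, which is not constant and does not even contain $0$.

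\emph{Comparison with the paper.} The paper computes the unique $g\in\PGL_3$ with $n_0\mapsto n_1\mapsto\cdots\mapsto n_4$ and checks $g(n_4)\not\propto n_0$. This certifies that the configuration lacks the order-$5$ symmetry the regular member has. Your invariant reaches the same conclusion with five dot products and no linear solve, and it explains it: a symmetry cycling the sides would force the adjacent invariants to be equal. Your adjacent-pair argument and the paper's single cyclic check both use that the equivalence respects the cyclic order of the sides.

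If you move your irrationality argument to all ten pairs, you remove even that assumption. Since $\tr(H_jH_i)$ is symmetric in $i,j$, any $g$ carrying one set of (mirror, centre) pairs onto the other matches the multisets of the ten pair invariants. These are all rational for the configuration of Theorem~\ref{thm:second}, but include five copies of $\tfrac{1-\sqrt5}{2}$ for the regular member. This is the pairwise form of the finite-versus-infinite mirror group contrast in Remark~\ref{rem:kaleido}.
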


\begin{proof}
The hypotheses of Theorem~\ref{thm:open} are verified in exact rational arithmetic:
(H1) the product of the five reflections is $-I$ over $\mathbb{Q}$, computed in
both orders; the five successive cross products of consecutive edge vectors share a
nonzero sign, so the pentagon is convex with sides in cyclic order and no vertex at
infinity; (H2) the witness orbit with $t_0=t_1=1/14$ has at each step a first hit on
the next side with $0<t<1$ and $s>0$ strictly, every losing side excluded strictly,
and nonvanishing transversality; (H3) $u_5=\kappa u_0$ with $\kappa>0$, and
$t_5-t_0=0$ exactly. The five impacts lie on five distinct sides.

For inequivalence, the $n=5$ member of Theorem~\ref{thm:main} has a projective
symmetry of order $5$ cycling its mirrors, which is a projective invariant;
determining the unique $g\in\PGL_3$ with $n_0\mapsto n_1\mapsto\cdots\mapsto n_4$
and checking $g(n_4)\not\propto n_0$ shows the present configuration has none.
\end{proof}

\subsection{The case $k=4$, and its failure for $k=6$}

\begin{theorem}\label{thm:k4}
For $k=4$, every configuration whose homologies preserve a common conic is
degenerate: the normals are coplanar, hence the four lines concurrent.
\end{theorem}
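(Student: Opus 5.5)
We work in the normalisation of the preceding subsection. By Lemma~\ref{lem:polar}, if the four homologies preserve a common nondegenerate conic, each centre is the pole of its axis. After a complex (or real) projective change of coordinates we may take $Q=x^2+y^2+z^2$. Then $H_i=R_i=I-2n_in_i^\top/(n_i\cdot n_i)$, where $n_i$ is the covector of the $i$-th line, and we need $n_i\cdot n_i\neq0$ since $L_iO_i\neq0$. The closure condition (H1) for $k=4$ reads $R_3R_2R_1R_0=cI$. Since $R_i\in\mathrm{O}(Q)$ and $\det=+1$, we get $c^2=1$ and $c=1$ (as $c^3=\det=1$ with $c=\pm1$). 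So $R_3R_2R_1R_0=I$, that is,
\[
R_1R_0=R_2R_3 .
\]
Call this common rotation $B\in\mathrm{SO}(Q)$.

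The key step is to compare axes. A product of two reflections $R_1R_0$ fixes pointwise the line $n_0^\perp\cap n_1^\perp$, which is spanned by $n_0\times n_1$ when $n_0\not\propto n_1$. If $B\neq I$, its fixed space $\ker(B-I)$ is one-dimensional. It is then spanned both by $n_0\times n_1$ and by $n_2\times n_3$. Hence all four normals are orthogonal to a common vector $w$, so they are coplanar. Coplanar covectors in $\RP^2$ mean the four lines pass through the common point $w$, which is the claimed concurrency.

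Two degenerate cases remain. If $B=I$, then $R_0=R_1$ and $R_2=R_3$, so $n_0\propto n_1$ and $n_2\propto n_3$. Two sides then share a supporting line, which is excluded for a polygon. Concurrency also holds trivially in this case, since two lines always meet. If some consecutive normals are proportional, the same remark applies. The care needed here is in the isotropic case, where $\ker(B-I)$ might fail to be spanned by $n_0\times n_1$ because $n_0\times n_1$ is isotropic. I would handle this by noting that a nonidentity element of $\mathrm{SO}(3,\mathbb{C})$ still has a one-dimensional fixed space: its eigenvalues are $1,\mu,\mu^{-1}$, and the unipotent case has a single Jordan block. With that, the argument goes through verbatim.

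The main obstacle is being precise about what ``preserve a common conic'' gives. If the conic is degenerate, Lemma~\ref{lem:polar} does not apply. I would state that the theorem concerns nondegenerate $Q$, which is what the normalisation of the preceding subsection assumes. For real indefinite $Q$, the same computation works in $\mathrm{O}(2,1)$ after diagonalising, with $\cdot$ replaced by the bilinear form of $Q$ and the cross product replaced by its $Q$-dual.
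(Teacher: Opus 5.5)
Your proof is correct and follows the paper's argument: you rewrite the closure relation as an equality of two products of two reflections, $R_1R_0=R_2R_3$, and compare their one-dimensional fixed axes to get a vector orthogonal to all four normals. Your additional care is welcome: forcing $c=1$ by determinants, treating the case $B=I$, and restricting to nondegenerate $Q$, which the paper assumes tacitly and which is genuinely needed, since the square with all four centres at its centre (the $n=4$, $\lambda=0$ member of Theorem~\ref{thm:main}) preserves $\operatorname{diag}(1,1,0)$ and its sides are not concurrent; the isotropic worry, however, is unnecessary, because for $n_0\not\propto n_1$ the equation $R_1R_0x=x$ is equivalent to $R_0x=R_1x$, which forces $x\in n_0^{\perp}\cap n_1^{\perp}$ directly.
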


\begin{proof}
$R_0R_1R_2R_3=I$ gives $R_0R_1=R_3R_2$, each side a product of \emph{exactly two}
reflections, hence a rotation with axis $n_0\times n_1$ resp.\ $n_3\times n_2$.
Equal rotations have equal axes, so all four $n_i$ are orthogonal to a common
vector.
\end{proof}

\begin{remark}\label{rem:k6}
The same statement for general even $k$ is false. For $k=6$ one gets
$R_0R_1R_2=R_5R_4R_3$, three reflections on each side, a factorisation that always
exists. Explicitly $n=(-2,3,3),(-3,-1,-1),(-2,-2,3),(1,-3,2),(2,1,-3),(3,4,3)$
satisfies $R_0\cdots R_5=+I$ exactly over $\mathbb{Q}$, preserves $x^2+y^2+z^2$, and
has normals of rank $3$.
\end{remark}

\subsection{Dimensions}

Let $V_k$ be the variety of configurations of $k$ lines with $k$ centres whose
product of homologies is scalar, and $S_k\subset V_k$ the closed subset of those
preserving a conic. Throughout, $M=M(L_\bullet,O_\bullet)$ denotes the
$3k\times6$ matrix of the linear system in the six unknowns
$Q_{11},Q_{12},Q_{13},Q_{22},Q_{23},Q_{33}$ whose $i$-th block of three rows
expresses the vanishing of the cross product $QO_i\times L_i^\top$, so that
$\ker M$ is the space of invariant forms and $S_k=\{\rk M\le5\}$. Both dimensions
below are computed at explicit rational points; semicontinuity propagates the
values to the component through each point.

\begin{theorem}\label{thm:dim}
At the configuration of Theorem~\ref{thm:second} \textup{(}$k=5$\textup{)} and at
that of Remark~\ref{rem:k6} \textup{(}$k=6$\textup{)}:
\[
  \dim V_k=4k-8,\qquad \dim(\text{moduli})=4k-16,\qquad \dim S_k=2k+2 ,
\]
and these values hold on a Zariski-open subset of the component through each point.
\end{theorem}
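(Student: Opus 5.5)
The plan is to compute each of the three dimensions as a tangent-space rank at the explicit rational points, and then propagate by semicontinuity. The ambient parameter space has $k$ lines and $k$ centres in $\RP^2$, so it has dimension $4k$. The variety $V_k$ is cut out by the condition that the product $P=H_{k-1}\cdots H_0$ be scalar. In $\PGL_3$, an $8$-dimensional group, this is at most $8$ conditions. The expected dimension is therefore $4k-8$, so the claim is that $V_k$ is a local complete intersection at the given points.

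\emph{Dimension of $V_k$.} I would write $P$ as a polynomial matrix in the affine coordinates of the $L_i,O_i$, clearing the denominators $L_iO_i$. I would then consider the map $F\colon(\mathbb{A}^2)^{2k}\to\mathfrak{sl}_3$, $F=P/\sqrt[3]{\det P}$ locally, or more simply the eight off-scalar entries $P_{ij}$ ($i\neq j$) and $P_{11}-P_{33}$, $P_{22}-P_{33}$. Next I compute the Jacobian $dF$ at the rational point and check in exact arithmetic that its rank is $8$. Then $V_k$ is smooth of dimension $4k-8$ there. Smoothness at a point is an open condition, so the same value holds on a Zariski-open subset of the unique component through the point. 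A point where the Jacobian has rank $8$ is a smooth point of the fibre, so it lies on exactly one component. The lower bound $\dim\ge 4k-8$ holds everywhere by Krull, and the upper bound comes from the tangent space.

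\emph{Moduli.} $\PGL_3$ acts on $V_k$ by $g\cdot(L_i,O_i)=(L_ig^{-1},gO_i)$, and $P$ is conjugated, so scalars are preserved. I would check that the stabiliser of the configuration is finite. It suffices that four of the lines $L_i$ are in general position: a projectivity fixing four general lines is the identity. This can be read off directly from the $n_i$ of Theorem~\ref{thm:second} and Remark~\ref{rem:k6}. Orbits near the point then have dimension $8$, and the quotient has dimension $4k-16$ on the open set where stabilisers stay finite.

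\emph{Dimension of $S_k$.} Here I would parametrise rather than use equations. By Lemma~\ref{lem:polar}, on the open set where the invariant form is nondegenerate, $S_k$ consists of a conic $Q$ (5 parameters) together with $k$ lines, with $O_i$ determined as the pole of $L_i$. Up to $\PGL_3$ we may take $Q=x^2+y^2+z^2$. The closure condition then becomes $R_{k-1}\cdots R_0=\pm I$ in $\mathrm{O}(3)$, which is $3$ conditions on $2k$ parameters. So the configurations for fixed $Q$ form a set of dimension $2k-3$, provided the product map $(S^2)^k\to\mathrm{SO}(3)$ is a submersion at the point. Adding the $5$ parameters of $Q$, which is equivalent to adding the orbit of $\PGL_3$ modulo $\mathrm{O}(Q)$, gives $2k-3+5=2k+2$.

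The step I expect to be the obstacle is certifying the submersion, i.e.\ that $d(R_{k-1}\cdots R_0)$ has rank $3$. A failure would mean that infinitesimal motions of the normals only move the product within a lower-dimensional subgroup. This happens when the normals are coplanar, which is the degenerate case of Theorem~\ref{thm:k4}. So I would verify this rank by exact computation at the points, whose normals have rank $3$. I would also check that $\rk M=5$ exactly at the points, so that the invariant form is unique and nondegenerate. This makes the map from the parametrisation to $S_k$ locally injective modulo the finite stabiliser and confirms that $S_k$ does not acquire extra dimension from degenerate forms. Semicontinuity of $\rk M$ and of the Jacobian ranks then gives the Zariski-open statement.
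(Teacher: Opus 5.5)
Your proposal is essentially the paper's proof. For $V_k$ you check that the Jacobian of the eight scalar-product equations has rank $8$. For $S_k$ you fix $Q$, check that the product of the $Q$-reflections has Jacobian of rank $3$ in the normals, use uniqueness of the invariant conic ($\rk M=5$), and add the $5$ parameters of $Q$.

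The only variation is the stabiliser step. You use four mirror lines in general position, which does hold at both points (for instance $n_0,\dots,n_3$). The paper instead computes exactly that $\{X\in\mathfrak{gl}_3: XO_i\propto O_i,\ L_iX\propto L_i\}$ is spanned by $I$. Your check is more elementary and gives an explicit open set on which the action is free. One slip: for $k=5$ the product of the five reflections lies in the $\det=-1$ component of $\mathrm{O}(3)$, not in $\mathrm{SO}(3)$.
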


\begin{proof}
Parametrise a configuration by $(L_i,O_i)\in(\mathbb{R}^3)^{2k}$, which is $6k$
coordinates with $2k$ scaling directions acting trivially. The condition that the
product be scalar is the vanishing of the traceless part of $P$, i.e.\ $8$
equations. At each of the two points the Jacobian of those $8$ equations has rank
exactly $8$, computed over $\mathbb{Q}$ by Gaussian elimination; rank being lower
semicontinuous, it is $8$ on a Zariski-open subset. Hence $\dim V_k=6k-2k-8=4k-8$
there. The infinitesimal stabiliser, the space of $X\in\mathfrak{gl}_3$ with
$XO_i\propto O_i$ and $L_iX\propto L_i$ for all $i$, is computed exactly to be
one-dimensional, i.e.\ spanned by $I$, so the $\PGL_3$-stabiliser is trivial; being
upper semicontinuous, it is trivial generically, and the moduli dimension is
$(4k-8)-8=4k-16$.

For $S_k$, fix the conic $Q$ and parametrise by $(n_1,\dots,n_k)$, which is $2k$
projective parameters. Every reflection then lies in $\mathrm{O}(Q)$ by
Lemma~\ref{lem:polar}, so the product does too, and the closure condition is at
most $\dim\mathrm{O}(Q)=3$ conditions; at both points the Jacobian in the $n_i$
directions has rank exactly $3$ over $\mathbb{Q}$, attaining the bound. As the
invariant conic is unique up to scale at both points --- the solution space of
$L_i\propto(QO_i)^\top$ is one-dimensional, again computed exactly --- letting $Q$
range over the $5$-dimensional space of conics gives
$\dim S_k=5+2k-3=2k+2$.
\end{proof}

\begin{corollary}\label{cor:threshold}
$2k+2=4k-8$ if and only if $k=5$. Consequently:
\begin{enumerate}[label=\textup{(\roman*)},nosep]
  \item for $k=5$, $S_5$ has the same dimension as $V_5$, hence is a union of
        irreducible components of $V_5$; in particular the component through the
        configuration of Theorem~\ref{thm:second}, which preserves a conic, is
        contained in $S_5$, so every configuration in \emph{that component}
        preserves a conic. Whether the conic is nondegenerate at every point of the
        component is a separate question; see Remark~\ref{rem:gaps};
  \item for $k\ge6$, $S_k$ is a proper closed subset of $V_k$, so the generic
        configuration on the component through the point of Remark~\ref{rem:k6}
        preserves no conic, and therefore comes from no geometry of constant
        curvature.
\end{enumerate}
\end{corollary}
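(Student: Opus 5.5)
The plan is to deduce the corollary directly from Theorem~\ref{thm:dim}, treating the dimension values there as the input and using only elementary facts about dimensions of closed subvarieties. The arithmetic equivalence is immediate: $2k+2=4k-8$ iff $2k=10$ iff $k=5$, and for $k\ge6$ one has $4k-8-(2k+2)=2k-10>0$.

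For (i), I would work locally at the configuration $x$ of Theorem~\ref{thm:second}. By that theorem, $x$ lies in $S_5$, because its five reflections preserve $x^2+y^2+z^2$ by Lemma~\ref{lem:polar}. Let $Z$ be an irreducible component of $V_5$ through $x$ on which, by Theorem~\ref{thm:dim}, $\dim Z=4\cdot5-8=12$. Let $Z'$ be an irreducible component of $S_5$ through $x$. Theorem~\ref{thm:dim} gives $\dim Z'=2\cdot5+2=12$, where it is computed as the component through that point.

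The key step is the containment argument. $Z'$ is irreducible and lies in $V_5$, so it sits inside some irreducible component of $V_5$. Since $\dim Z'=12$ is the maximal local dimension of $V_5$ near $x$, as read off from the Jacobian rank $8$, $Z'$ must equal a component $Z$. Hence that component lies in $S_5=\{\rk M\le5\}$, which is closed. So every point of it has $\ker M\neq0$, that is, it preserves a nonzero quadratic form.

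The main obstacle is making sure that ``the component through the point'' is unambiguous. Here the Jacobian rank $8$ at $x$ is exactly what we need: it makes $x$ a smooth point of $V_5$, of local dimension $12$. A smooth point lies on a unique component, so $Z'=Z$. Nondegeneracy of the form is not claimed, which matches the statement's deferral to Remark~\ref{rem:gaps}.

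For (ii), I would take the component $Z$ of $V_k$ through the point of Remark~\ref{rem:k6}. Again $Z$ is unique because the point is smooth with Jacobian rank $8$, and $\dim Z=4k-8$. The intersection $Z\cap S_k$ is closed in $Z$. Its dimension is at most $\dim S_k=2k+2<4k-8$, where I use that the dimension count of Theorem~\ref{thm:dim} bounds $S_k$ near that point and, by semicontinuity, generically. Since $Z$ is irreducible, a proper closed subset has empty interior, so $Z\setminus S_k$ is Zariski-open and dense. For a configuration in this set, $\ker M=0$ and no form is preserved. In particular no nondegenerate conic is preserved, so by Lemma~\ref{lem:polar} and the discussion in Remark~\ref{rem:geom} the configuration arises from no constant-curvature Cayley--Klein model.

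A caveat is that the bound $\dim S_k=2k+2$ should be applied to every component of $S_k$ meeting $Z$, not just the one through the rational point. I would argue this via the parametrisation by $(Q,n_1,\dots,n_k)$ in the proof of Theorem~\ref{thm:dim}. That parametrisation dominates the nondegenerate-conic part of $S_k$ with fibres of dimension at least $0$. The degenerate-$Q$ part lies in a lower-dimensional closed set, which I would not attempt to analyse further.
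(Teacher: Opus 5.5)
Your arithmetic and your treatment of (i) are correct. They are the deduction from Theorem~\ref{thm:dim} that the paper intends; it gives no separate proof. Your smooth-point observation is exactly what makes ``the component through the point'' well defined. Jacobian rank $8$ puts the configuration on a unique component $Z$ of $V_5$, so the $12$-dimensional component of $S_5$ through it must equal $Z$. You were also right not to claim that all of $S_5$ is a union of components of $V_5$. Dimensions computed at one point do not give that; it only follows later from Theorem~\ref{thm:irred} and Corollary~\ref{cor:v5s5}.

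The gap is in (ii). You bound $\dim(Z\cap S_k)$ by a global value $\dim S_k=2k+2$. But Theorem~\ref{thm:dim} supplies that value only on the component of $S_k$ through the rational point $\xi$ of Remark~\ref{rem:k6}. It does not control other pieces of $S_k$ that could meet $Z$ away from $\xi$: degenerate forms, or parts of the $(Q,n_\bullet)$-locus where the closure condition has excess-dimensional solution sets. Your caveat paragraph does not close this. The parametrisation only bounds the image by the dimension of its source, which is itself known only near $\xi$, and you leave the degenerate part explicitly unanalysed.

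Fortunately no global bound is needed. All you need is $Z\not\subseteq S_k$, which is a local statement.
\begin{itemize}
\item At $\xi$ the kernel of $M$ is one-dimensional and spanned by the nondegenerate form $x^2+y^2+z^2$.
\item By semicontinuity, every point of $S_k$ near $\xi$ has $\rk M=5$ and a unique, nondegenerate invariant conic.
\item Hence near $\xi$ the set $S_k$ is the image of the $(Q,n_\bullet)$-parametrisation and has local dimension $2k+2$; this is what Theorem~\ref{thm:dim} actually computes.
\item Since $Z$ is irreducible of dimension $4k-8$, it has that local dimension at $\xi$. So $Z\subseteq S_k$ would force $4k-8\le 2k+2$, which is false for $k\ge6$.
\end{itemize}
Thus $Z\cap S_k$ is a proper closed subset of the irreducible $Z$, hence nowhere dense, which is the genericity claim.
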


\subsection{The threshold at $k=5$}

Corollary~\ref{cor:threshold}(i) says that one component of $V_5$ lies in $S_5$. We
now prove a statement that does not depend on irreducibility, valid on an explicit
open subset of $V_5$. The mechanism is to group the five mirrors as $2+2+1$.

Recall first that the involutions of $\PGL_3$ are exactly the harmonic homologies,
and note the following normal form, which also removes the fifth mirror from the
problem.

\begin{proposition}\label{prop:reduction}
Let $H_0,\dots,H_3$ be harmonic homologies and $M=H_3H_2H_1H_0$. There is a
harmonic homology $H_4$ with $H_4H_3H_2H_1H_0=-I$ if and only if $M^2=I$ and
$M\neq I$; then $H_4=-M^{-1}$ is unique. Consequently $\dim V_5=16-4=12$, since
$\dim\mathrm{Inv}^4=16$ and the involutions form a subvariety of codimension
$8-4=4$ in $\PGL_3$, in agreement with Theorem~\ref{thm:dim}.
\end{proposition}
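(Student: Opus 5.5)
The plan is to work in $\mathrm{GL}_3$, where each $H_i$ is a genuine matrix with $H_i^2=I$ and $\det H_i=-1$, and to reduce everything to the fact recalled just above: the involutions of $\PGL_3$ are exactly the harmonic homologies. In matrix form this says that a matrix $N$ with $N^2=I$ and $\det N=-1$ is a harmonic homology precisely when $N\neq -I$, i.e.\ when its $(-1)$-eigenspace is a line and its $(+1)$-eigenspace a plane. Given $M=H_3H_2H_1H_0$, the equation $H_4M=-I$ determines $H_4=-M^{-1}$ uniquely as a matrix, so uniqueness is immediate. Existence therefore amounts to deciding when $-M^{-1}$ is a harmonic homology.

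Next I will carry out the characterisation. Since $\det M=(-1)^4=1$, we get $\det(-M^{-1})=-1$ automatically. Then $(-M^{-1})^2=M^{-2}$, which equals $I$ if and only if $M^2=I$. Finally, $-M^{-1}\neq -I$ holds if and only if $M\neq I$. Conversely, if $H_4$ is a harmonic homology with $H_4M=-I$, then $M=-H_4$, so $M^2=H_4^2=I$, and $M\neq I$ because $H_4\neq -I$. For the scalar version, where the product is only required to be $cI$, I will first normalise: $\det$ forces $c^3=-1$, so over $\mathbb{R}$ we have $c=-1$. I expect this scalar normalisation to be the only point needing care, together with checking that $-M^{-1}$ has exactly the eigenvalue pattern $(-1,1,1)$. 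That pattern follows from diagonalisability, since $N^2=I$, combined with $\det N=-1$ and $N\neq -I$.

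For the dimension count, I will parametrise $V_5$ up to the trivial scalings by the quadruples $(H_0,\dots,H_3)\in\mathrm{Inv}^4$, with $H_4$ then determined. Each set of harmonic homologies is parametrised by an axis and a centre off it, so it has dimension $2+2=4$. This gives $\dim\mathrm{Inv}^4=16$. The condition that $M^2=I$ with $M\neq I$ says that $M$ lies in the involution locus of $\PGL_3$. That locus has dimension $4$ inside the $8$-dimensional group, so it has codimension $4$.

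The remaining step is to confirm that the map $(H_0,\dots,H_3)\mapsto M$ is submersive, or at least dominant with generic fibres of the expected dimension, near the relevant points. This ensures that the preimage has codimension exactly $4$, giving $16-4=12$. I will justify it by noting that the map is already onto a neighbourhood when only $H_3$ is varied with the others fixed, after composing with translation. Alternatively, I can cite the Jacobian rank $8$ computation of Theorem~\ref{thm:dim}, which gives $\dim V_5=4\cdot5-8=12$ and serves as the consistency check.
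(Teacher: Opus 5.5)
Your argument for the equivalence and for uniqueness is the same as the paper's. $H_4=-M^{-1}$ is forced, and $\det(-M^{-1})=-1$ because $\det M=1$. So $-M^{-1}$ is an involution exactly when $M^2=I$, and it is a harmonic homology rather than $-I$ exactly when $M\neq I$. Your converse via $M=-H_4$ is also correct.

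The gap is in the dimension count. Your justification for submersivity of the product map $\varphi\colon\mathrm{Inv}^4\to\PGL_3$ fails for dimensional reasons:
\begin{enumerate}[label=\textup{(\alph*)},nosep]
  \item With $H_0,H_1,H_2$ fixed, $H_3\mapsto H_3H_2H_1H_0$ sweeps out the right translate $\mathrm{Inv}\cdot H_2H_1H_0$. This is a $4$-dimensional subvariety, so it cannot contain a neighbourhood in the $8$-dimensional $\PGL_3$.
  \item Freeing $H_2$ as well still only reaches a translate of the $7$-dimensional hypersurface of elements with eigenvalue $1$, by Lemma~\ref{lem:pair}.
  \item Dominance with generic fibres of the expected dimension does not help either. $\mathrm{Inv}$ is a proper closed subset of $\PGL_3$, and the fibres of $\varphi$ over it need not be generic.
  \item The Jacobian computation of Theorem~\ref{thm:dim} only controls the component through one explicit rational point.
\end{enumerate}

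What the naive codimension count does legitimately give is a lower bound. $\mathrm{Inv}$ is smooth of codimension $4$, hence locally cut out by $4$ equations, so every component of $\varphi^{-1}(\mathrm{Inv})$ has dimension at least $12$. This is Proposition~\ref{prop:krull}.

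The paper's proof of this proposition does not establish the reverse inequality either; its ``consequently'' is just the naive count. Equality on all of $V_5$ comes only later, in two steps:
\begin{enumerate}[label=\textup{(\arabic*)},nosep]
  \item Proposition~\ref{prop:globaldensity} bounds every non-regular stratum by dimension $11$.
  \item Theorem~\ref{thm:irred} fibres the regular locus over the $7$-dimensional set of possible $A=H_1H_0$, with irreducible $5$-dimensional fibres.
\end{enumerate}
To complete your version you would need either that stratified argument, or a proof that $d\varphi$ is surjective at every point of $V_5$. Such a proof must let at least three factors vary. It must also cover degenerate strata such as commuting pairs, since a single non-submersive stratum could carry a component of dimension larger than $12$.
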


\begin{proof}
$H_4:=-M^{-1}$ gives the relation, and $\det H_4=(-1)^3\det M^{-1}=-1$ since
$\det M=1$. So $H_4$ is a harmonic homology iff $H_4^2=I$, i.e.\ $M^2=I$; and
$M=I$ gives $H_4=-I$, trivial in $\PGL_3$.
\end{proof}

\begin{lemma}\label{lem:pair}
Let $H_0,H_1$ be harmonic homologies and $A=H_1H_0$. Then $H_0AH_0=A^{-1}$ and $A$
has eigenvalue $1$. If moreover $A$ is diagonalisable with eigenvalues
$1,\lambda,\lambda^{-1}$ and $\lambda\neq\pm1$, then in an eigenbasis
$(e_1,e_2,e_3)$ the conics preserved by $A$ form the pencil
\[
  \mathcal{P}_A=\{\alpha\,x_1^2+2\beta\,x_2x_3\},
\]
and every member of $\mathcal{P}_A$ is preserved by $H_0$ and by $H_1$. Thus
$\mathcal{P}_A$ is exactly the set of conics preserved by both.
\end{lemma}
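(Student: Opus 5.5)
The identity $H_0AH_0=A^{-1}$ is immediate from $H_0^2=H_1^2=I$: $H_0H_1H_0H_0=H_0H_1=(H_1H_0)^{-1}$. For the eigenvalue $1$, I will use that the two axes $L_0,L_1$ are lines in $\RP^2$, so they meet in at least a point $x$ with $L_0x=L_1x=0$, $x\neq0$. Both homologies fix $x$ as a vector, not merely projectively, so $Ax=x$. As a determinant check, $\det A=(-1)^2=1$, and the spectrum of $A$ is closed under inversion because $A$ is conjugate to $A^{-1}$. This is consistent with eigenvalues $1,\lambda,\lambda^{-1}$ in the diagonalisable case.

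Next, assume $A=\operatorname{diag}(1,\lambda,\lambda^{-1})$ in the eigenbasis with $\lambda\neq\pm1$, so the three eigenvalues are distinct. I will determine the symmetric $Q$ with $A^\top QA=Q$. Entrywise this reads $Q_{ij}\mu_i\mu_j=Q_{ij}$, where $\mu=(1,\lambda,\lambda^{-1})$. The products $\mu_i\mu_j$ equal $1$ exactly for $(i,j)=(1,1)$ and $(2,3)$. Indeed $\lambda^2\neq1$, $\lambda^{-2}\neq1$ and $\lambda,\lambda^{-1}\neq1$. The solution space is therefore $\{\alpha x_1^2+2\beta x_2x_3\}$, which is $\mathcal P_A$. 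I will read ``preserved'' as $A^\top QA=cQ$. Because $\det A=1$ and $Q$ is symmetric in dimension $3$, I expect $c^3=1$ whenever $Q$ is nondegenerate, so $c=1$. For degenerate members I will simply work with the linear condition $A^\top QA=Q$, since that is what the matrix $M$ of the dimension count encodes.

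The main step is to show that $H_0$ preserves every member of $\mathcal P_A$. From $H_0AH_0=A^{-1}$, the map $H_0$ sends the $\mu$-eigenspace of $A$ to the $\mu^{-1}$-eigenspace: if $Ae=\mu e$ then $A(H_0e)=H_0A^{-1}e=\mu^{-1}H_0e$. Hence $H_0e_1=\pm e_1$, using $H_0^2=I$, while $H_0e_2=se_3$ and $H_0e_3=s^{-1}e_2$ for some nonzero scalar $s$. I then compute $H_0^\top QH_0$ for $Q=\alpha x_1^2+2\beta x_2x_3$ directly. The $x_1^2$ coefficient becomes $(\pm1)^2\alpha=\alpha$. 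The cross term $2\beta x_2x_3$ is sent to $2\beta(s x_3)(s^{-1}x_2)=2\beta x_2x_3$. So $Q$ is preserved exactly, with no extra scalar. Then $H_1=AH_0$ preserves $Q$ as well, as a product of two maps in $\mathrm O(Q)$. The one point needing care is to check that $H_0$ really acts as a swap on $\langle e_2,e_3\rangle$ with mutually inverse coefficients, and not with an arbitrary pair of scalars. This follows from $H_0^2=I$ restricted to that plane.

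The final equality of sets follows from two inclusions. Any conic preserved by both $H_0$ and $H_1$ is preserved by $A=H_1H_0$, and so lies in $\mathcal P_A$ by the eigenvalue computation. Conversely, the previous step shows that every member of $\mathcal P_A$ is preserved by both. I expect no real obstacle here. The main subtlety is the scalar ambiguity in ``preserves a conic'' for degenerate members, and I will resolve it by taking the invariance condition in the linear sense throughout.
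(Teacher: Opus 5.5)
Your proof is correct and follows the paper's argument essentially step by step: the same entrywise computation $Q_{ij}\mu_i\mu_j=Q_{ij}$ produces the pencil. The same observation that $H_0$ exchanges the $\mu$- and $\mu^{-1}$-eigenspaces (so $H_0e_1=\pm e_1$ and $H_0$ swaps $e_2,e_3$ with coefficients $s,s^{-1}$) gives invariance, and the converse uses the same two inclusions via $H_1=AH_0$. The only local difference is the eigenvalue-$1$ step, where you exhibit the fixed vector directly as the intersection point of the two axes, which both normalised lifts $I-2OL/(LO)$ fix; the paper instead deduces it from $A\sim A^{-1}$ and $\det A=1$. Both are valid, and your linear reading $A^\top QA=Q$ of ``preserved'' is exactly the paper's convention.
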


\begin{proof}
$H_0AH_0=H_0H_1H_0H_0=H_0H_1=(H_1H_0)^{-1}=A^{-1}$. Hence $A$ is conjugate to
$A^{-1}$, so its spectrum is stable under $\mu\mapsto\mu^{-1}$; in dimension $3$
some eigenvalue is fixed by that involution, hence equals $\pm1$, and $\det A=1$
excludes $-1$, since then the remaining two would be mutually inverse with product
$1\neq-1$. So $1$ is an eigenvalue.

In the eigenbasis, $\tau_A(Q)=A^\top QA$ multiplies the entry $Q_{ij}$ by
$\lambda_i\lambda_j$, so $Q$ is fixed exactly when $Q_{ij}=0$ unless
$\lambda_i\lambda_j=1$; for $\lambda\neq\pm1$ this leaves $Q_{11}$ and $Q_{23}$,
i.e.\ the stated pencil.

Since $H_0$ conjugates $A$ to $A^{-1}$, it maps the $\mu$-eigenspace of $A$ to the
$\mu^{-1}$-eigenspace: it fixes $e_1$ and interchanges $e_2,e_3$. So in that basis
$H_0=\begin{pmatrix}p&&\\&&q\\&r&\end{pmatrix}$, and $H_0^2=I$ forces $p^2=1$ and
$qr=1$. A direct computation then gives $H_0^\top(x_1^2)H_0=p^2x_1^2=x_1^2$ and
$H_0^\top(x_2x_3)H_0=qr\,x_2x_3=x_2x_3$, so $\tau_{H_0}$ is the identity on
$\mathcal{P}_A$; and $H_1=AH_0$ then preserves it too. Conversely a conic preserved
by both is preserved by $A$.
\end{proof}

\begin{theorem}\label{thm:threshold}
Let $H_0,\dots,H_4$ be harmonic homologies with $H_4H_3H_2H_1H_0=-I$, and suppose
$A=H_1H_0$ and $B=H_3H_2$ are diagonalisable with eigenvalues $1,\lambda^{\pm1}$
and $1,\mu^{\pm1}$ respectively, $\lambda,\mu\neq\pm1$. Then $H_0,\dots,H_4$ preserve a
common \textbf{nonzero quadratic form}
\[
  Q=\alpha\,x_1^2+2\beta\,x_2x_3,\qquad
  \alpha=\frac{a}{w_1},\qquad
  \beta=\frac{b}{w_3}\ \text{ or }\ \frac{c}{w_2},
\]
in the notation of the proof. If $w_2=w_3=0$, the displayed formula is replaced by
$Q=x_2x_3$. Otherwise $\beta$ is defined using whichever of $w_2,w_3$ is nonzero;
the relation $b\,w_2=c\,w_3$ makes the two definitions agree when both apply.
Nondegeneracy of $Q$ is not read off from these coordinates; under the geometric
hypotheses of Lemma~\ref{lem:nondeg} it follows from that lemma.
\end{theorem}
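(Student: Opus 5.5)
We group the five mirrors as $2+2+1$ and use Lemma~\ref{lem:pair} on the first two pairs. By that lemma, the conics preserved by both $H_0$ and $H_1$ form the pencil $\mathcal{P}_A=\{\alpha x_1^2+2\beta x_2x_3\}$ in an eigenbasis $(e_1,e_2,e_3)$ of $A$. Similarly, the conics preserved by both $H_2$ and $H_3$ form a pencil $\mathcal{P}_B$. The relation $H_4H_3H_2H_1H_0=-I$ gives $H_4=-(BA)^{-1}$. Since $-I$ acts trivially on quadratic forms, any form preserved by $A$ and $B$ is preserved by $H_4$. Therefore it suffices to produce a nonzero form lying in $\mathcal{P}_A\cap\mathcal{P}_B$, that is, preserved by $H_0,H_1,H_2,H_3$.

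To find one, we work in the eigenbasis of $A$ and describe $\mathcal{P}_B$ there by linear conditions. By Lemma~\ref{lem:pair} applied to $B$, we have $\mathcal{P}_B=\{\gamma f_1^2+2\delta f_2f_3\}$, where $f_1,f_2,f_3$ are the linear forms dual to an eigenbasis $(e'_1,e'_2,e'_3)$ of $B$. Equivalently, a form $Q$ is $B$-invariant iff $Q(e'_i,e'_j)=0$ for the pairs $\{1,2\},\{1,3\},\{2,2\},\{3,3\}$. For $Q\in\mathcal{P}_A$ these are four linear equations in the two unknowns $(\alpha,\beta)$. They are not independent, and the main work is showing that at most one is effective.

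To see this, we exploit the involution structure. We have $H_2BH_2=B^{-1}$, and $M=BA$ satisfies $(BA)^2=I$ by Proposition~\ref{prop:reduction}, so $BAB=A^{-1}$. Hence $B$ itself swaps the $\lambda$- and $\lambda^{-1}$-eigenspaces of $A$ and fixes $\ker(A-I)$ up to scale. We will set $w:=e'_1$, the fixed axis of $B$, written in $A$-coordinates as $(w_1,w_2,w_3)$. The equation $Q(e'_1,e'_2)=Q(e'_1,e'_3)=0$ then reads, after expansion, $\alpha w_1 x_1(\cdot)+\beta(w_2x_3+w_3x_2)(\cdot)=0$. The symmetry under $B$ (swapping the roles of $e_2,e_3$) should collapse the four conditions to a single relation of the form $\alpha w_1=a$, $\beta w_3=b$, $\beta w_2=c$, with the compatibility $bw_2=cw_3$ forced by this symmetry. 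These are the quantities $a,b,c$ named in the statement, read off from the coordinates of $e'_2,e'_3$.

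With that reduction, we finish by case analysis. If $w_2=w_3=0$, the $\beta$-conditions are vacuous and $Q=x_2x_3$ works directly. Otherwise we choose the scale and define $\alpha,\beta$ by the displayed quotients, using whichever of $w_2,w_3$ is nonzero; the relation $bw_2=cw_3$ makes the two choices agree. The resulting $Q$ is nonzero because $(\alpha,\beta)\neq(0,0)$, and we then verify invariance under $B$ directly.

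The main obstacle is the second step: proving rigorously that the four linear conditions have rank at most one on $\mathcal{P}_A$, rather than merely rank at most two. This is exactly where the relation $(BA)^2=I$ must be used. As a first attempt, we will show that $B$ maps $\mathcal{P}_A$ to $\mathcal{P}_{A^{-1}}=\mathcal{P}_A$ and acts there as a linear involution of a $2$-dimensional space. Such an involution either is the identity, or has a fixed line, which is also the space of $B$-invariant members. Either way the fixed space is nonzero, and this gives existence independently of the explicit coordinates.
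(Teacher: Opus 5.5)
Your reduction is sound and is the paper's: by Lemma~\ref{lem:pair} it suffices to find a nonzero form in $\mathcal P_A\cap\mathcal P_B$, and $H_4=-(BA)^{-1}$ then preserves it. But the step you yourself flag as the main obstacle is not proved, and the mechanism you propose for it is false.

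The relation $(BA)^2=I$ gives $BAB=A^{-1}$, which is not a conjugation: it does not give $BAB^{-1}=A^{-1}$. In fact $B$ cannot swap the $\lambda^{\pm1}$-eigenlines of $A$ while fixing $\ker(A-I)$. That would mean $BAB^{-1}=A^{-1}$, and combined with $BAB=A^{-1}$ it forces $B^2=I$, contradicting $\mu\neq\pm1$.

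Your fallback fails for the same reason, and for two others:
\begin{enumerate}[label=\textup{(\roman*)},nosep]
\item The map $Q\mapsto B^\top QB$ sends $\mathcal P_A$ to $\mathcal P_{B^{-1}AB}$, and $B^{-1}AB=B^{-2}A^{-1}$, so $B$ does not act on $\mathcal P_A$.
\item This map is not an involution: its eigenvalues on forms include $\mu^{\pm2}$.
\item Even a genuine involution of a plane may be $-\mathrm{id}$, which has no fixed line.
\end{enumerate}
The elements known to invert $A$ by conjugation are $H_0$ and $H_1$, and that is what Lemma~\ref{lem:pair} uses; nothing analogous holds for $B$. So the collapse of the four conditions to a single relation, and the compatibility $bw_2=cw_3$, are only asserted (``should collapse''). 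Two generic planes in the $6$-dimensional space of forms meet only in $0$, so this step is the whole content of the theorem.

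The missing idea is to work with the involution $N=BA$ rather than with $B$. Write $N=2w\tilde w/s-I$, where $w$ is the $+1$-eigenvector of $N$ and $\tilde w=(a,b,c)$ its axis. This is the $w$ of the statement, not the fixed vector of $B$.

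For $Q\in\mathcal P_A$, $B=NA^{-1}$ preserves $Q$ iff $N$ does. A direct computation shows $N^\top QN=Q$ as soon as $w^\top Q=(\alpha w_1,\beta w_3,\beta w_2)$ is a nonzero multiple of $\tilde w$; then $w^\top Qw\neq0$ because $s\neq0$. Solving that proportionality needs exactly $bw_2=cw_3$ and $w_1\neq0$.

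This is where the hypothesis that $B$ has eigenvalue $1$ enters. Since $\det A=1$, $\det(B-I)=0\iff\det(N-A)=0$. The rank-one determinant formula with $D=I+A$ gives $\det(N-A)=-2(1+\lambda)(\lambda-1)(bw_2-cw_3)/(\lambda s)$. The case $w_1=0$ is then ruled out because it forces $\operatorname{tr}B=-1$, i.e.\ $\mu=-1$. The case $w_2=w_3=0$ is handled separately by $Q=x_2x_3$.

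Your proposal never turns the eigenvalue-$1$ condition on $B$ into a constraint on the pair $(w,\tilde w)$. That conversion is precisely what makes the two pencils meet.
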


\begin{proof}
Work in an eigenbasis of $A$, so $A=\operatorname{diag}(1,\lambda,\lambda^{-1})$
and, by Lemma~\ref{lem:pair}, the conics preserved by $H_0$ and $H_1$ are the
pencil $\mathcal{P}_A=\{\alpha x_1^2+2\beta x_2x_3\}$.

Put $N=BA$. By Proposition~\ref{prop:reduction}, $N^2=I$; also $\det N=1$ and
$N\neq I$, so $N$ has eigenvalues $(1,-1,-1)$ and can be written $N=2P-I$ with
$P=w\tilde w/s$ of rank one, $s=\tilde ww\neq0$. Write $w=(w_1,w_2,w_3)^\top$ and
$\tilde w=(a,b,c)$.

If $w_2=w_3=0$, then the lower $2\times2$ block of $N$ is $-I$ and the form
$Q=x_2x_3$ is preserved by $A$, by $N$, and hence by $B=NA^{-1}$. Since $B$ is
regular semisimple, Lemma~\ref{lem:pair} gives preservation by $H_2,H_3$, and then
by $H_4$. Assume henceforth that $(w_2,w_3)\neq(0,0)$.

\emph{When does $B$ preserve a member of $\mathcal{P}_A$?} Since $B=NA^{-1}$ and
$A^\top QA=Q$ for $Q\in\mathcal{P}_A$, we have $B^\top QB=(A^{-1})^\top(N^\top
QN)A^{-1}$, which equals $Q$ if and only if $N^\top QN=Q$.
\emph{The relation forces the conic condition.} By Lemma~\ref{lem:pair} applied
to the pair $(H_2,H_3)$, $B$ has eigenvalue $1$, i.e.\ $\det(B-I)=0$. As
$B-I=(N-A)A^{-1}$ and $\det A=1$, this says $\det(N-A)=0$. Now $N-A=-D+\frac2s
w\tilde w$ with $D=I+A=\operatorname{diag}(2,1+\lambda,1+\lambda^{-1})$, so by the
rank-one determinant formula
\[
  \det(N-A)=\det(-D)\Bigl(1-\tfrac2s\,\tilde wD^{-1}w\Bigr)
           =-\det D\cdot\frac{s-2\tilde wD^{-1}w}{s}.
\]
With $D^{-1}=\operatorname{diag}\bigl(\tfrac12,\tfrac1{1+\lambda},
\tfrac{\lambda}{1+\lambda}\bigr)$ the numerator is
\[
  s-2\tilde wD^{-1}w
  =bw_2+cw_3-\frac{2(bw_2+\lambda cw_3)}{1+\lambda}
  =\frac{(\lambda-1)(bw_2-cw_3)}{1+\lambda},
\]
the terms in $aw_1$ cancelling. Since $\det D=2(1+\lambda)^2/\lambda$,
\[
  \boxed{\ \det(N-A)=-\,\frac{2(1+\lambda)(\lambda-1)\,(b\,w_2-c\,w_3)}{\lambda\,s}\ }
\]
\begin{equation}\label{eq:conic-cond}
  b\,w_2=c\,w_3 .
\end{equation}
As $\lambda\neq\pm1$ and $s\neq0$, $\det(N-A)=0$ is \emph{equivalent} to
\eqref{eq:conic-cond}.

For $Q=\alpha x_1^2+2\beta x_2x_3$ one computes
$w^\top Q=(\alpha w_1,\ \beta w_3,\ \beta w_2)$. If $w_1=0$, the relation
\eqref{eq:conic-cond} and a direct trace calculation give
$\operatorname{tr}B=-1$, contradicting $\mu\neq\pm1$; hence $w_1\neq0$.
Thus $\alpha=a/w_1$, while $\beta$ is determined by the nonzero one of $w_2,w_3$,
and the two remaining components are compatible by \eqref{eq:conic-cond}.
(If exactly one of $w_2,w_3$ vanishes, say $w_2=0$, then \eqref{eq:conic-cond}
reads $c\,w_3=0$, hence $c=0$, and the remaining components determine $\beta$.)

For the resulting $Q$, one has $\tilde w=\rho w^\top Q$ for some $\rho\neq0$.
Since $s=\tilde ww\neq0$, also $w^\top Qw\neq0$, and direct substitution gives
\[
 N=2\frac{w w^\top Q}{w^\top Qw}-I,\qquad N^\top QN=Q.
\]
This direct computation is valid even when $Q$ is degenerate; no converse form of
the polarity lemma is being used.

Hence there is $Q\in\mathcal{P}_A$ preserved by $B$. By Lemma~\ref{lem:pair} it is
preserved by $H_0$ and $H_1$; being preserved by $B$ and lying in
$\mathcal{P}_B$ --- again by Lemma~\ref{lem:pair}, the conics preserved by $B$ are
exactly those preserved by $H_2$ and $H_3$ --- it is preserved by $H_2$ and $H_3$;
and therefore by $H_4=-(BA)^{-1}$.
\end{proof}

Write $V_5^{\mathrm{reg}}\subset V_5$ for the open subset where $A$ and $B$ are
regular semisimple with $\lambda,\mu\neq\pm1$.

\begin{corollary}\label{cor:closed}
Every configuration in $V_5^{\mathrm{reg}}$ preserves a nonzero invariant quadratic
form, so $\rk M\le5$ there; since that is a closed condition, it also holds on
$\overline{V_5^{\mathrm{reg}}}$. If the configuration is a convex pentagon with
pairwise distinct centres and no four centres collinear, Lemma~\ref{lem:nondeg}
makes the form nondegenerate and the billiard has constant curvature.
\end{corollary}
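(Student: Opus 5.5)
The corollary has three parts, and I would take them in order: the pointwise statement on $V_5^{\mathrm{reg}}$, the passage to the closure, and the nondegeneracy.

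\emph{Pointwise statement.} Fix a configuration in $V_5^{\mathrm{reg}}$. After rescaling $H_4$ I would take $H_4H_3H_2H_1H_0=-I$; this normalisation is legitimate because each $H_i$ is defined only up to scale as a projective involution, and Proposition~\ref{prop:reduction} gives the sign convention. By definition of $V_5^{\mathrm{reg}}$, the pairs $A=H_1H_0$ and $B=H_3H_2$ satisfy the hypotheses of Theorem~\ref{thm:threshold}. That theorem then produces a nonzero $Q$, possibly $x_2x_3$ in the degenerate subcase, preserved by all five homologies.

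\emph{Rank bound.} Next I would translate this into $\rk M\le5$. Invariance of $Q$ under $H_i=I-2O_iL_i/(L_iO_i)$ gives
\[
H_i^\top QH_i-Q=-\tfrac{2}{L_iO_i}\bigl(L_i^\top O_i^\top Q+QO_iL_i\bigr)+\tfrac{4\,O_i^\top QO_i}{(L_iO_i)^2}L_i^\top L_i .
\]
I would check that this vanishes if and only if $QO_i\propto L_i^\top$, i.e.\ $QO_i\times L_i^\top=0$. For the "if" direction, the computation of Lemma~\ref{lem:polar} goes through without nondegeneracy. For the "only if" direction, apply both sides to $O_i$: this gives $QO_i\cdot(\text{scalar})$ expressed in terms of $L_i^\top$. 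One must handle the case $QO_i=0$, but then the cross product is trivially zero. Hence $Q$ is a nonzero element of $\ker M$ and $\rk M\le5$. Since the entries of $M$ are polynomial in $(L_\bullet,O_\bullet)$, the condition $\rk M\le5$ is the vanishing of all $6\times6$ minors. That is Zariski (hence Euclidean) closed, so it persists on $\overline{V_5^{\mathrm{reg}}}$.

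\emph{Nondegeneracy.} Under the convexity, distinct-centre and non-collinearity hypotheses, I would invoke Lemma~\ref{lem:nondeg} (stated later) to conclude that the form is nondegenerate. Lemma~\ref{lem:polar}'s setting then holds: each $L_i$ is the polar of $O_i$ with respect to a nondegenerate $Q$. The billiard is therefore a billiard in the Cayley--Klein geometry of $Q$ with geodesic walls and reflections in $\mathrm{O}(Q)$, which is what "constant curvature" means here.

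\emph{Main obstacle.} The only delicate point is the closure step. A configuration in $\overline{V_5^{\mathrm{reg}}}$ obtains a nonzero kernel vector only abstractly, not via the formula of Theorem~\ref{thm:threshold}. Nondegeneracy therefore has to come from Lemma~\ref{lem:nondeg} applied to whatever kernel vector exists, rather than from the explicit coordinates. I would make sure that lemma's hypotheses concern only the existence of some invariant nonzero form.
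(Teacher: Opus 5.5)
Your proposal is correct and is the argument the paper leaves implicit (it gives no separate proof): Theorem~\ref{thm:threshold} supplies a nonzero invariant form on $V_5^{\mathrm{reg}}$. The identification of $\ker M$ with the invariant forms, which you check directly while the paper builds it into the definition of $M$, gives $\rk M\le5$; vanishing of the $6\times6$ minors passes this to the closure; and Lemma~\ref{lem:nondeg} applied to any nonzero kernel vector gives nondegeneracy. One small correction: no rescaling of $H_4$ is needed, since for real configurations $c^3=\det(H_4\cdots H_0)=-1$ already forces $c=-1$, whereas rescaling $H_4$ by anything other than $\pm1$ would destroy $H_4^2=I$, which Theorem~\ref{thm:threshold} uses through Proposition~\ref{prop:reduction}.
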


\subsection{Closing the two gaps}

\begin{lemma}[Nondegeneracy is geometric]\label{lem:nondeg}
Let $Q\neq0$ be an invariant form of a configuration with $L_iO_i\neq0$ for all
$i$, and suppose $Q$ is degenerate, with kernel $K$. Then for each $i$
\emph{exactly one} of the following holds: $O_i\in K$, or $K\subset L_i$.
Consequently:
\begin{enumerate}[label=\textup{(\roman*)},nosep]
  \item if $\dim K=1$, say $K=\langle p\rangle$, then at least two of the centres
        coincide \textup{(}both equal to $p$\textup{)};
  \item if $\dim K=2$, at least four of the centres are collinear.
\end{enumerate}
Hence if the centres are pairwise distinct and no four of them are collinear,
every nonzero invariant form is a \textbf{nondegenerate} conic.
\end{lemma}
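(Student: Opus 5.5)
The plan is to turn invariance of $Q$ under a single harmonic homology into a polarity-type relation $QO_i\propto L_i^\top$. The dichotomy is then linear algebra, and the two consequences follow by counting together with the geometry of a convex pentagon.

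\emph{Step 1: the key identity.} Write $H=I-2OL/(LO)$ and expand $H^\top QH=Q$:
\[
H^\top QH-Q=-\frac{2}{LO}\bigl(L^\top O^\top Q+QOL\bigr)+\frac{4\,O^\top QO}{(LO)^2}L^\top L .
\]
Apply the bilinear form to a pair $(x,y)$ with $Lx=0$ and $y=O$. Every term except $x^\top QO\,(LO)$ vanishes, so $x^\top QO=0$ for all $x\in\ker L$. Since $\ker L$ is a plane, this gives $QO_i=c_iL_i^\top$ for some scalar $c_i$; this is the converse of Lemma~\ref{lem:polar}, valid even for degenerate $Q$.

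\emph{Step 2: the dichotomy.}
\begin{itemize}
\item If $c_i=0$, then $QO_i=0$, so $O_i\in K$.
\item If $c_i\neq0$, then for $k\in K$ we have $L_ik=c_i^{-1}O_i^\top Qk=0$, using the symmetry of $Q$. Hence $K\subset\ker L_i$.
\item Both cannot hold: $O_i\in K\subset\ker L_i$ would give $L_iO_i=0$, contrary to the hypothesis.
\end{itemize}

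\emph{Step 3: case (ii), $\dim K=2$.} Here $K$ is a projective line $\kappa$, and $K\subset\ker L_i$ means that side $i$ lies on $\kappa$. The sides of a pentagon have distinct supporting lines, so this happens for at most one $i$. For every other $i$ we get $O_i\in\kappa$, so at least four centres are collinear.

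\emph{Step 4: case (i), $\dim K=1$, $K=\langle p\rangle$.} Here $K\subset\ker L_i$ means that the supporting line of side $i$ passes through the point $p$. The case split is:
\begin{itemize}
\item If at least two indices satisfy $O_i\in K$, those centres coincide with $p$ and we are done.
\item Otherwise at least four of the five supporting lines pass through $p$. I would exclude this using convexity: a point of $\RP^2$, including points at infinity, lies on at most two supporting lines of a convex polygon's sides.
\end{itemize}

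\emph{Why convexity is needed.} I expect Step 4 to be the delicate point, because pure algebra does not force two centres to coincide. The monodromy only gives parity information:
\begin{itemize}
\item On $K$, each $H_i$ acts by $-1$ if $O_i=p$ and by $+1$ if $K\subset\ker L_i$. The product must be $-1$, so the number of centres equal to $p$ is odd.
\item On $V/K$, the form $\bar Q$ is nondegenerate. Each $H_i$ with $K\subset\ker L_i$ induces a reflection in $\mathrm{O}(\bar Q)$, and each $H_i$ with $O_i=p$ induces the identity. The product must be $-I$, which has determinant $+1$, so the number of reflections is even.
\end{itemize}
A single coincident centre with four concurrent mirrors is therefore algebraically allowed. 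The convex-pentagon hypothesis of Corollary~\ref{cor:closed} is what rules it out.

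\emph{The supporting-line fact.} For an affine point $p$: three side lines through $p$ would place the polygon in the intersection of three half-planes bounded by concurrent lines. That region is a wedge whose boundary uses only two of the lines, so the third side could not have positive length. For $p$ at infinity, at most two sides of a convex polygon are parallel. This gives at most two indices with $K\subset\ker L_i$, hence at least three centres equal to $p$.

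\emph{Conclusion.} The final sentence of the lemma follows: under distinct centres and no four collinear, neither (i) nor (ii) can occur, so $K=0$ and $Q$ is nondegenerate.
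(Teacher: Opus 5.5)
Your proof is correct and has the same skeleton as the paper's: establish $QO_i=\mu_iL_i^\top$ (with $\mu_i=0$ allowed), read off the dichotomy, then count. Steps 2 and 3 match the paper essentially verbatim. There are two local differences.

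\emph{The polarity relation.} The paper compares ranks in $L_i^\top v^\top+vL_i=\kappa L_i^\top L_i$, with $v=QO_i$. You instead pair the defect form against $(x,O_i)$ with $x\in\ker L_i$. Both arguments are valid for degenerate $Q$; yours is slightly more direct.

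\emph{Case (i).} This is the real difference. Write $T=\{i:p\in L_i\}$. The paper rules out $|T|\ge4$ combinatorially: any four of the five cyclically ordered sides contain two disjoint adjacent pairs, so $p$ would equal two distinct vertices. That argument uses only that the pentagon has distinct vertices and distinct adjacent side lines, not convexity, and it yields exactly the needed $|T|\le3$. Your wedge argument (together with the parallel-sides case) uses convexity to get the sharper bound $|T|\le2$, so at least three centres equal $p$.

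\emph{Your parity remark.} Your count (the number of centres at $p$ is odd, $|T|$ is even) is correct. Combined with the paper's bound $|T|\le3$, it already gives $|T|\le2$ without any convexity. So your claim that convexity is what excludes ``one centre at $p$, four concurrent mirrors'' overstates its role: the vertex structure of the pentagon alone excludes it.
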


\begin{proof}
Invariance gives $QO_i=\mu_iL_i^\top$ for scalars $\mu_i$, possibly zero: indeed
$H_i^\top QH_i=Q$ expands to $L_i^\top v^\top+vL_i=\kappa L_i^\top L_i$ with
$v=QO_i$, and the left side has rank $2$ unless $v\parallel L_i^\top$, while the
right side has rank $\le1$. If $\mu_i=0$ then $O_i\in K$. If $\mu_i\neq0$ then
$L_i^\top\in\operatorname{im}Q=K^{\perp}$, i.e.\ $L_i$ vanishes on $K$, that is
$K\subset L_i$. The two are exclusive: $O_i\in K\subset L_i$ would give
$L_iO_i=0$.

(i) Let $T=\{i: p\in L_i\}$. If $|T|\ge4$, then among four of the five sides there
are two adjacent pairs, and $p$ would equal two distinct vertices of the polygon.
So $|T|\le3$ and $|S|=5-|T|\ge2$ where $S=\{i:O_i=p\}$.

(ii) Here $K$ is a line $\ell$, and $K\subset L_i$ forces $L_i=\ell$; the sides
being distinct, this happens for at most one $i$. So at least four indices have
$O_i\in\ell$.
\end{proof}

\begin{proposition}[Every component has dimension at least $12$]\label{prop:krull}
Every irreducible component of $V_5$ has dimension $\ge12$.
\end{proposition}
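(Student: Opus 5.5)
We would prove the bound by Krull's principal ideal theorem, applied to the description of $V_5$ as the locus cut out by \emph{eight} equations in a space of dimension $20$. Concretely, fix the ambient $X=(\RP^2)^{\vee 5}\times(\RP^2)^5$ minus the closed set $\{L_iO_i=0\text{ for some }i\}$. This is a smooth irreducible quasi-projective variety of dimension $4\cdot5=20$, which one may also write as $6k-2k$ with $k=5$.

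On $X$ the product $P=H_4\cdots H_0$ is a regular map to $\mathrm{GL}_3$ up to scale. To see this, write $H_i=(L_iO_i)I-2O_iL_i$. This representative is bihomogeneous in $(L_i,O_i)$ of degree $(1,1)$, and it is invertible on $X$. The condition ``$P$ is scalar'' is the vanishing of the traceless part of $P$, that is, of the off-diagonal entries and of the differences of diagonal entries. That gives $8$ multihomogeneous equations, all of the same multidegree.

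Locally on $X$, on an affine chart where we normalise one coordinate of each $L_i$ and of each $O_i$, $V_5$ is the zero set of $8$ regular functions on a $20$-dimensional irreducible affine variety. By Krull's height theorem, applied iteratively or in the form ``every component of $Z(f_1,\dots,f_r)$ in an irreducible variety of dimension $d$ has dimension $\ge d-r$'', every irreducible component of $V_5$ meeting that chart has dimension $\ge 20-8=12$. The charts cover $X$, and dimension of a component is local. So every component of $V_5$ has dimension at least $12$.

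Two points need checking.

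\emph{Scalar versus $\PGL_3$.} The scalar-monodromy condition must be exactly the vanishing of these $8$ polynomials, with no hidden extra conditions. This holds because scalarity of a $3\times3$ matrix is precisely those $8$ linear conditions on its entries.

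\emph{Which space $V_5$ lives in.} $V_5$ must be taken inside $X$, where the admissibility condition $L_iO_i\neq0$ is an open condition, rather than in some closure where the bound could fail. If $V_5$ is intended to include configurations with $L_iO_i=0$, we would instead work in the full product $(\RP^2)^{\vee5}\times(\RP^2)^5$ with the polynomial representative above. The same Krull count applies there, since that product is also irreducible of dimension $20$.

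An alternative route uses Proposition~\ref{prop:reduction}. There $V_5$ is identified with $\{(H_0,\dots,H_3)\in\mathrm{Inv}^4: M^2=I\}$ minus $\{M=I\}$. The condition $M^2=I$ is the preimage of the involution locus, which has codimension $4$ in $\PGL_3$, under the map $\mathrm{Inv}^4\to\PGL_3$. Since $\mathrm{Inv}$ is smooth of dimension $4$ and the involution variety is locally cut out by $4$ equations, Krull again gives $\ge16-4=12$. We would mention this as a consistency check, but base the proof on the $8$-equation count.

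The main obstacle is only the bookkeeping of the first route. One must make sure the $8$ equations are well defined and that the ambient space is irreducible of dimension exactly $20$. Krull's theorem itself is then immediate.
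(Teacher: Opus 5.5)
Your proof is correct. The route you mention only as a consistency check is exactly the paper's: it uses Proposition~\ref{prop:reduction} to write $V_5=\varphi^{-1}(\mathrm{Inv})$ for the product map $\varphi:\mathrm{Inv}^4\to\PGL_3$. It then applies Krull on the $16$-dimensional irreducible $\mathrm{Inv}^4$, using that the codimension-$4$ involution locus is locally cut out by $4$ equations.

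Your main route instead keeps the fifth mirror. It treats $V_5$ as the preimage of the identity of $\PGL_3$ under the product map on $X=\mathrm{Inv}^5$ (Lemma~\ref{lem:invchart}), cut out by $8$ global, explicit scalarity equations. This makes you independent of Proposition~\ref{prop:reduction}, and of the fact that $\mathrm{Inv}$ is locally a complete intersection in $\PGL_3$. The paper uses that fact without comment; it holds because $\mathrm{Inv}$ is a smooth conjugacy class. Your count is also the same as the Jacobian computation of Theorem~\ref{thm:dim}, where rank $8$ at a point shows the bound is attained.

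The paper's version buys coherence with what follows. Proposition~\ref{prop:globaldensity} and Theorem~\ref{thm:irred} both work in the $\mathrm{Inv}^4$ parametrisation, so the non-regular strata (dimension $\le 11$) are compared with the bound $12$ in a single ambient.

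In either version Krull's bound concerns the complexified variety, since the real points of a component can have smaller dimension. Your $\RP^2$ factors should therefore be read over $\mathbb{C}$, as the paper does explicitly in its irreducibility section.
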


\begin{proof}
By Proposition~\ref{prop:reduction}, $V_5=\varphi^{-1}(\mathrm{Inv})$ with
$\varphi:\mathrm{Inv}^4\to\PGL_3$ the product map. A harmonic homology is a pair
$(L,O)$ with $LO\neq0$ modulo two scalings, so $\dim\mathrm{Inv}=6-2=4$ and
$\mathrm{Inv}$ has codimension $8-4=4$ in $\PGL_3$; also $\mathrm{Inv}^4$ is
irreducible of dimension $16$. Locally $\mathrm{Inv}$ is cut out by $4$ equations,
so every component of the preimage has codimension at most $4$ in
$\mathrm{Inv}^4$, i.e.\ dimension at least $16-4=12$.
\end{proof}

\begin{proposition}[Where regularity fails]\label{prop:nonreg}
$A=H_1H_0$ fails to be regular semisimple only if $\lambda=-1$ or $\lambda=1$. One
has $A^2=I$ if and only if $H_0$ and $H_1$ commute; among such pairs, $A=I$
\textup{(}so $\lambda=1$\textup{)} exactly when $H_1=H_0$, and $\lambda=-1$ exactly
when $H_1\neq H_0$. If $\lambda=1$ with $A\neq I$ then $A$ is unipotent. The
commuting locus has dimension $6$ inside the $8$-dimensional $\mathrm{Inv}^2$, and
the unipotent locus dimension $7$.
\end{proposition}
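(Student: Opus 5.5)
The plan is to read everything off the spectrum of $A$, and then do the dimension counts with one scalar invariant of the pair. By Lemma~\ref{lem:pair}, $A=H_1H_0$ has $\det A=1$ and eigenvalues $1,\lambda,\lambda^{-1}$. If $\lambda\neq\pm1$, these three eigenvalues are pairwise distinct, so $A$ is regular semisimple. Hence failure of regularity forces $\lambda^2=1$, which is the first claim.

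For the commuting statements I will use that $H_0,H_1$ are involutions. Then $A^2=I$ means $H_1H_0=(H_1H_0)^{-1}=H_0H_1$, so $A^2=I$ if and only if the two commute. If $A=I$, then $H_1=H_0^{-1}=H_0$, and conversely. If $A^2=I$ and $A\neq I$, then $A$ is diagonalisable with eigenvalues in $\{\pm1\}$. One eigenvalue is $1$ and the product is $1$, so the spectrum is $(1,-1,-1)$ and $\lambda=-1$. If $\lambda=1$ but $A\neq I$, then all eigenvalues equal $1$ and $A$ is unipotent and nontrivial.

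For the dimensions I will compute the trace exactly as in Lemma~\ref{lem:ident}(b), using $\tr(OLM)=LMO$. Expanding the product of $H_1=I-2O_1L_1/(L_1O_1)$ and $H_0=I-2O_0L_0/(L_0O_0)$ gives
\[
\tr A=-1+4x,\qquad x=\frac{(L_1O_0)(L_0O_1)}{(L_0O_0)(L_1O_1)} .
\]
Comparing with $\tr A=1+\lambda+\lambda^{-1}$, the condition $\lambda=1$ becomes the single equation $x=1$. This equation is nonconstant on the irreducible $8$-dimensional $\mathrm{Inv}^2$, so it cuts out a hypersurface of dimension $7$. Removing the closed subset $\{A=I\}=\{H_1=H_0\}$, which has dimension $4$, leaves the unipotent locus, still of dimension $7$; I will exhibit one explicit unipotent pair to confirm this set is nonempty.

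For the commuting locus, I will use that $H_1$ commutes with $H_0$ exactly when $H_1$ preserves the eigenspaces of $H_0$, namely the axis $L_0$ and the centre $O_0$. When $H_1\neq H_0$, this forces $O_1\in L_0$ and $O_0\in L_1$. Choosing $H_0$ gives $4$ parameters, $O_1$ on the line $L_0$ gives $1$, and $L_1$ in the pencil through $O_0$ gives $1$. The resulting count $4+1+1=6$ also exceeds the $4$-dimensional diagonal $H_1=H_0$, so the commuting locus has dimension $6$.

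The main obstacle is making the dimension claims precise: showing the counted loci really have the stated dimension, as opposed to just bounding them. The trace formula handles the unipotent case by one equation. For the commuting case, the key step is the eigenspace argument showing $O_1\in L_0$ and $O_0\in L_1$, and it needs care because the $-1$-eigenspace of $H_0$ is the one-dimensional space $\langle O_0\rangle$.
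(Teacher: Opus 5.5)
Your proof is correct. The spectral part and the equivalence $A^2=I\iff H_0H_1=H_1H_0$ are the paper's argument, but you reach the two dimension counts by a different route.

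The paper argues through orbits. Distinct commuting involutions are simultaneously diagonalisable, so such pairs form $\PGL_3$ modulo a diagonal torus, of dimension $8-2=6$. The unipotent locus is the preimage, under the product map from $\mathrm{Inv}^2$ into the $7$-dimensional set of elements with eigenvalue $1$, of the $6$-dimensional unipotent variety, hence of codimension one.

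You instead write explicit equations in $(L_i,O_i)$:
\begin{itemize}
\item Distinct commuting pairs are exactly those with $L_0O_1=0=L_1O_0$, i.e.\ each centre lies on the other's axis. This gives $4+1+1=6$.
\item Unipotence is the single equation $x=1$. Krull's principal ideal theorem plus nonconstancy of $x$ gives pure dimension $7$, and the $4$-dimensional diagonal is too small to be a component.
\end{itemize}

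What your route buys:
\begin{itemize}
\item Precision. The paper's ``preimage of a codimension-one subset'' step tacitly needs that subset to be cut out by one equation on the eigenvalue-$1$ locus. That equation is $\tr A=3$, which is exactly your $x=1$.
\item Extra information for free. The condition $\lambda=-1$ is the locus $x=0$, i.e.\ $(L_1O_0)(L_0O_1)=0$. The part where only one factor vanishes is the $7$-dimensional non-semisimple stratum $(1)\oplus J_2(-1)$ of Proposition~\ref{prop:globaldensity}.
\end{itemize}
The paper's phrasing is shorter, and it is stated in terms of conjugacy-class dimensions, which is the form reused in that proposition's table.

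One simplification: over $\mathbb{C}$ your promised explicit unipotent pair is unnecessary. The diagonal already lies in $\{x=1\}$, and every component through it has dimension $7>4$, so it contains points off the diagonal. Such a pair, with nonvanishing gradient, matters only if you want the real locus to be $7$-dimensional.
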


\begin{proof}
The spectrum of $A$ is $\{1,\lambda,\lambda^{-1}\}$ by Lemma~\ref{lem:pair}, so
regular semisimplicity fails exactly when $\lambda=\pm1$. Now $A^2=I$ if and only
if $H_1H_0=(H_1H_0)^{-1}=H_0H_1$, i.e.\ iff the two commute; and then $A=I$ iff
$H_1=H_0$, which is the case $\lambda=1$, while $A\neq I$ forces the eigenvalues
$(1,-1,-1)$, i.e.\ $\lambda=-1$. Two commuting involutions of $\PGL_3$
are simultaneously diagonalisable, so the pair is determined by a projective frame
modulo the diagonal torus: dimension $8-2=6$. If $\lambda=1$ then $A$ is unipotent;
the regular unipotent class of $\PGL_3$ has dimension $8-2=6$, of codimension $1$
in the $7$-dimensional locus of elements with eigenvalue $1$, whose preimage in
$\mathrm{Inv}^2$ is all of $\mathrm{Inv}^2$; so the unipotent locus has dimension
$8-1=7$.
\end{proof}

\begin{lemma}[Factorisation fibres]\label{lem:factorfibres}
Let $X$ be a product of two harmonic homologies, and let
\[
 \mathcal F_X=\{(J_0,J_1)\in\mathrm{Inv}^2:J_1J_0=X\}.
\]
Using the determinant-one lift of $X$, the following are the maximal dimensions of
the fibres:
\[
\begin{array}{c|c}
\text{Jordan type of }X&\dim\mathcal F_X\\ \hline
I&4\\
\operatorname{diag}(1,-1,-1)&2\\
J_2(1)\oplus[1]&2\\
J_3(1)&1\\
\operatorname{diag}(1,\rho,\rho^{-1}),\ \rho\neq\pm1&1.
\end{array}
\]
The non-semisimple type with eigenvalues $(1,-1,-1)$ has fibre dimension $1$.
\end{lemma}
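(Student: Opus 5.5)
The fibre $\mathcal F_X$ is parametrised by its first factor. Given $J_0$, the second factor is forced to be $J_1=XJ_0$, and the only condition is that $XJ_0$ be an involution of determinant $-1$ (in the determinant-one normalisation of $X$). That condition is $(XJ_0)^2=I$, equivalently $J_0XJ_0=X^{-1}$. I would therefore identify
\[
\mathcal F_X\cong\{J\in\mathrm{Inv}: JXJ=X^{-1}\},
\]
the set of harmonic homologies that invert $X$ by conjugation. The lemma then reduces to computing, for each Jordan type, the dimension of this set.

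The set is a coset-like object. If $J_0$ is one inverting homology, then every other inverting element has the form $gJ_0$ with $g$ in the centraliser $Z(X)$. Its dimension is therefore at most $\dim Z(X)$. It equals $\dim Z(X)$ exactly when a generic element $gJ_0$ is again an involution, and is smaller when the involution condition cuts down further. So the plan is to go type by type: write $X$ in normal form, compute $Z(X)$, and then impose $(gJ_0)^2=I$, $\det=-1$ directly.

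\textbf{The five cases.}
\begin{itemize}
\item $X=I$. The condition is just $J_1=J_0$, so the fibre is all of $\mathrm{Inv}$, of dimension $4$.
\item $X=\operatorname{diag}(1,\rho,\rho^{-1})$ with $\rho\neq\pm1$. By the argument of Lemma~\ref{lem:pair}, $J$ fixes $e_1$ and swaps $e_2$ and $e_3$, so $J=\begin{pmatrix}p&&\\&&q\\&r&\end{pmatrix}$ with $p^2=1$ and $qr=1$. The determinant condition $-pqr=-1$ gives $p=1$. The only free parameter is $q$, so the fibre has dimension $1$.
\item $X=\operatorname{diag}(1,-1,-1)$. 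Here $X=X^{-1}$, so the condition is that $J$ commutes with $X$. Then $J$ preserves $\langle e_1\rangle$ and the plane $\langle e_2,e_3\rangle$, and on that plane $J$ is an involution of $\mathrm{GL}_2$. With the determinant fixed, this is either $\pm I$ or a reflection in the plane, a $2$-parameter family. The fibre has dimension $2$.
\item $X=J_2(1)\oplus[1]$ and $X=J_3(1)$. I would write $X=I+N$ with $N$ nilpotent and $X^{-1}=I-N+N^2$. The condition $JXJ=X^{-1}$ becomes $JNJ=-N+N^2$. I would solve this linear condition on $J$ together with $J^2=I$ in coordinates. For $J_3(1)$ I expect $J$ to be forced to be upper triangular up to the parameters of the centraliser, giving dimension $1$. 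For $J_2(1)\oplus[1]$ the larger centraliser, of dimension $5$, leaves dimension $2$.
\item The non-semisimple type with eigenvalues $(1,-1,-1)$, that is $X=[1]\oplus(-J_2(1))$. The same computation on the $2\times2$ block, where $J$ must invert $-J_2(1)$, gives a $1$-parameter family.
\end{itemize}

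As a consistency check, I would verify that $\dim(\text{class of }X)+\dim\mathcal F_X=8$ whenever the class of $X$ lies in the image of the $8$-dimensional space $\mathrm{Inv}^2$. For example, the regular semisimple classes with eigenvalue $1$ have dimension $6$, and each fibre over them has dimension $1$. The one-parameter family of $\rho$ gives total $6+1+1=8$, as required. The analogous count for the other classes is consistent with the dimensions in Proposition~\ref{prop:nonreg}.

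The main obstacle is the unipotent cases. There the inverting homologies need not sit in a nice block form, and the interaction between the involution condition and the centraliser has to be checked by honest coordinate computation. I would handle them first: fix one explicit inverting homology for each type, which must exist since $X$ is in the image, and parametrise the fibre as $\{gJ_0 : g\in Z(X),\ (gJ_0)^2=I\}$.
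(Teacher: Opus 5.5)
Your reduction is the paper's: $J_1=XJ_0$ is forced, and the fibre is the set of $J_0$ with $J_0XJ_0=X^{-1}$, $J_0^2=I$ and $\det J_0=-1$ (equivalently $\tr J_0=1$). Your treatment of $X=I$, of $\operatorname{diag}(1,\rho,\rho^{-1})$ and of $\operatorname{diag}(1,-1,-1)$ is correct and reproduces the normal forms the paper lists.

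The gap is in the unipotent rows. For $J_3(1)$ you only say what you expect. For $J_2(1)\oplus[1]$ the reason you give does not establish the entry. In your own coset description the fibre is $\{gJ_0: g\in Z(X),\ J_0gJ_0=g^{-1}\}$. The centraliser of $J_2(1)\oplus[1]$ is $5$-dimensional in $\mathrm{GL}_3$, so $4$-dimensional in $\PGL_3$, and it gives only the bound $4$. The table entry $2$ is exactly the claim that the condition $J_0gJ_0=g^{-1}$ removes two dimensions, and that computation is what is missing. Already in the regular semisimple case this condition halves the $2$-dimensional torus, so the centraliser dimension is no proxy for the answer.

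The obstacle you anticipate, that inverting homologies need not have a block form, does not arise, and seeing this closes the gap. Since $JXJ^{-1}=X^{-1}$ and $X^{-1}-I=-X^{-1}(X-I)$, the matrix $J$ maps each $\ker(X-I)^m$ and each $\operatorname{im}(X-I)^m$ onto itself, just as in the semisimple case it permutes eigenspaces.
\begin{itemize}
\item For $X=I+N$ with $N=E_{12}+E_{23}$, this makes $J$ upper triangular. The superdiagonal of $JNJ^{-1}$ is $(d_1/d_2,d_2/d_3)$ and must equal that of $-N+N^2$, namely $(-1,-1)$. So $J^2=I$ and $\tr J=1$ force the diagonal $(1,-1,1)$. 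Writing $J=\begin{pmatrix}1&x&y\\0&-1&z\\0&0&1\end{pmatrix}$, the relation $JN=(-N+N^2)J$ gives $x=1-z$, and $J^2=I$ gives $2y+xz=0$. This leaves one free parameter, which is the paper's family.
\item For $X=I+E_{12}$, the linear relation $JE_{12}=-E_{12}J$ gives $J=\begin{pmatrix}a&b&c\\0&-a&0\\0&e&f\end{pmatrix}$. Then $\tr J=1$ gives $f=1$, and $J^2=I$ gives $a=\pm1$, $c(a+1)=0$, $e(1-a)=0$. These are the two $2$-parameter families with $(a,c)=(1,0)$ and $(a,e)=(-1,0)$.
\item For $(1)\oplus J_2(-1)$, state why $J$ is block diagonal: it preserves the generalised eigenspaces of the self-inverse eigenvalues $1$ and $-1$. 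On the block the solutions are $\begin{pmatrix}\pm1&r\\0&\mp1\end{pmatrix}$, and $\det J=-1$ forces the $(1,1)$ entry to be $1$.
\end{itemize}

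Finally, your consistency check is misstated. The sum $\dim(\text{class of }X)+\dim\mathcal F_X$ equals $4,6,7,6,7$ for $I$, $\operatorname{diag}(1,-1,-1)$, $J_3(1)$, $J_2(1)\oplus[1]$ and $(1)\oplus J_2(-1)$, not $8$. Even your regular example needed the extra $+1$ from $\rho$. These numbers are the dimensions of the preimages of the non-regular strata, which are proper subvarieties of $\mathrm{Inv}^2$. The correct check is that they are less than $8$ and agree with Propositions~\ref{prop:nonreg} and~\ref{prop:globaldensity}. As you phrased it, the right answers would have appeared to fail the check.
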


\begin{proof}
The condition on $J_0$ is $J_0X=X^{-1}J_0$, together with $J_0^2=I$ and
$\tr J_0=1$; then $J_1=XJ_0$. In the regular semisimple case the solutions have
the form
\[
 \begin{pmatrix}1&0&0\\0&0&q\\0&q^{-1}&0\end{pmatrix},
 \qquad q\neq0,
\]
up to projective sign. For $J_3(1)$ they are
\[
 \begin{pmatrix}1&1-q&q(q-1)/2\\0&-1&q\\0&0&1\end{pmatrix},
\]
and for $J_2(1)\oplus[1]$ they lie in the two two-parameter families
\[
 \begin{pmatrix}-1&r&s\\0&1&0\\0&0&1\end{pmatrix},\qquad
 \begin{pmatrix}1&r&0\\0&-1&0\\0&s&1\end{pmatrix}.
\]
For $\operatorname{diag}(1,-1,-1)$ the generic solutions are
$\operatorname{diag}(1,J)$ with
$J=\left(\begin{smallmatrix}a&b\\c&-a\end{smallmatrix}\right)$ and
$a^2+bc=1$, a two-dimensional quadric. The identity has the full four-dimensional
space $\mathrm{Inv}$. For the non-semisimple type, take
$X=\left(\begin{smallmatrix}1&0&0\\0&-1&1\\0&0&-1\end{smallmatrix}\right)$.
The two one-parameter families
\[
 \begin{pmatrix}1&0&0\\0&-1&r\\0&0&1\end{pmatrix},\qquad
 \begin{pmatrix}1&0&0\\0&1&r\\0&0&-1\end{pmatrix}
\]
solve $J_0X=X^{-1}J_0$, $J_0^2=I$ and $\tr J_0=1$; these are all the
solutions, so the fibre has dimension $1$. These normal forms give the table.
\end{proof}

\begin{proposition}[Global dimension of the non-regular locus]\label{prop:globaldensity}
Let $Z\subset V_5$ be the locus where $A$ or $B$ fails to be regular semisimple.
Then every irreducible component of $Z$ has dimension at most $11$. Consequently
$V_5^{\mathrm{reg}}$ is dense in every irreducible component of $V_5$.
\end{proposition}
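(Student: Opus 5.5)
By the symmetry $(H_0,H_1)\leftrightarrow(H_2,H_3)$ it suffices to treat $Z_A$, the locus where $A=H_1H_0$ is not regular semisimple, and then $Z=Z_A\cup Z_B$. The plan is a fibration count. Proposition~\ref{prop:reduction} identifies $V_5$ with the set of quadruples $(H_0,\dots,H_3)\in\mathrm{Inv}^4$ such that $M=H_3H_2H_1H_0$ satisfies $M^2=I$ and $M\neq I$. Put $N=M$, which then has type $\operatorname{diag}(1,-1,-1)$. Over such a quadruple, record the pair $(A,N)$. Since $B=NA^{-1}$ is determined by them, the configuration is recovered from
\begin{enumerate}[label=\textup{(\alph*)},nosep]
\item the choice of $A$ in a given Jordan stratum together with a point of its factorisation fibre $\mathcal F_A$;
\item the choice of $N$ in the $4$-dimensional class of involutions;
\item a point of the factorisation fibre $\mathcal F_{B}$ of $B=NA^{-1}$.
\end{enumerate}
So for each stratum $\Sigma$ of non-regular-semisimple $A$ (the Jordan types $I$, $\operatorname{diag}(1,-1,-1)$, non-semisimple $(1,-1,-1)$, $J_2(1)\oplus[1]$, $J_3(1)$) I would bound
\[
\dim Z_A\cap\{A\in\Sigma\}\le \dim\Sigma+\max\dim\mathcal F_A+4+\max\dim\mathcal F_B .
\]
Each maximum is taken from the table of Lemma~\ref{lem:factorfibres}. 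The dimensions of the strata are read from conjugacy class dimensions in $\PGL_3$:
\begin{itemize}[nosep]
\item the identity has dimension $0$;
\item the class of $\operatorname{diag}(1,-1,-1)$ has dimension $4$;
\item the non-semisimple class with eigenvalues $(1,-1,-1)$ has dimension $6$;
\item the class of $J_2(1)\oplus[1]$ has dimension $4$;
\item the class of $J_3(1)$ has dimension $6$.
\end{itemize}
These match the counts of Proposition~\ref{prop:nonreg}: for instance, commuting pairs give $4+2=6$, and the unipotent locus gives $6+1=7$.

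The naive bound takes $\dim\mathcal F_B\le 4$, but that value is attained only when $B=I$, i.e.\ $N=A$. This is a closed condition, so the count has to be split into the generic sub-case and the degenerate one. Generically $B$ is regular semisimple with a $1$-dimensional fibre, which gives:
\begin{itemize}[nosep]
\item $A=I$ (so $H_1=H_0$): $0+4+4+1=9$;
\item $A$ of type $\operatorname{diag}(1,-1,-1)$: $4+2+4+1=11$;
\item $A$ non-semisimple of type $(1,-1,-1)$: $6+1+4+1=12$. This overshoots, so here I would use that $A$ must also be a product of two involutions, i.e.\ be conjugate to its inverse, which cuts the relevant locus down; alternatively, $N$ is constrained by the requirement that $NA^{-1}$ have eigenvalue $1$ (Lemma~\ref{lem:pair} applied to $B$), which is one condition and drops the count by $1$;
\item $J_2(1)\oplus[1]$: $4+2+4+1-1=10$;
\item $J_3(1)$: $6+1+4+1-1=11$.
\end{itemize}
The constraint that $B$ has eigenvalue $1$ is exactly the conic condition \eqref{eq:conic-cond} in the proof of Theorem~\ref{thm:threshold}. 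I must check that it is a genuine nontrivial condition on $N$ for fixed $A$ in each stratum. Regular semisimple $A$ should not be used here: in that case the condition is a single equation, $\det(N-A)=0$.

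The degenerate sub-cases, where $B$ is itself non-regular, are handled by the same table with the roles reversed. They impose extra closed conditions on $N$, so each such sub-case lies in the boundary of one of the cases above and has strictly smaller dimension.

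The main obstacle is the bookkeeping: I need to show that the single condition ``$B$ has eigenvalue $1$'' really cuts down dimension on every stratum, since otherwise the $J_3(1)$ and non-semisimple $(-1)$ strata reach $12$. The first thing I would try is the determinant formula from Theorem~\ref{thm:threshold}, extended to the non-diagonalisable $A$ by working in Jordan normal form; there $\det(N-A)$ is a nonzero polynomial in $N$ over the involution class.

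Once $\dim Z\le11$ is established, density follows from Proposition~\ref{prop:krull}. Every component of $V_5$ has dimension at least $12$, so no component can be contained in $Z$. Hence $V_5^{\mathrm{reg}}=V_5\setminus Z$ meets every component in a nonempty open, hence dense, subset.
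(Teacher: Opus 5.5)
Your scheme is the paper's own. You fibre $Z_A$ over $(A,N)$ with $N=BA\in\mathrm{Inv}$, and add the class dimension, $\dim\mathcal F_A$, the dimension of the admissible $N$, and $\dim\mathcal F_B$. You cut $N$ by the single equation $\det(N-A)=0$ and finish with Proposition~\ref{prop:krull}.

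The step you flag as the main obstacle is indeed where work is needed, and you do not carry it out. The paper computes in the chart $w=(1,u,v)$, $\tilde w=(1-pu-qv,p,q)$:
\begin{itemize}
\item for $A=J_3(1)$, $\det(N-A)$ is twice an irreducible nonzero polynomial;
\item for $A=(1)\oplus J_2(-1)$, it is nonzero in every chart.
\end{itemize}
Hence $\dim\mathcal C_A=3$ in both cases. Your first suggestion for the $(1)\oplus J_2(-1)$ stratum does nothing. That class is closed under inversion, and the count $6+1$ already encodes that $A$ is a product of two homologies.

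The genuine gap is your treatment of non-regular $B$. The inference ``extra closed conditions on $N$, hence boundary, hence strictly smaller dimension'' is invalid. Exactly on those loci $\dim\mathcal F_B$ jumps from $1$ to $2$ (types $\operatorname{diag}(1,-1,-1)$ and $J_2(1)\oplus[1]$), or to $4$ at $B=I$. So you cannot conclude that such points of $V_5$ lie in the closure of the $B$-regular part. The codimension in $N$ must be weighed explicitly against the fibre jump. Reversing the roles of $A$ and $B$ does not help, since $Z_A\cap Z_B$ is symmetric.

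Worse, your premise that $B$ is generically regular fails in two strata.
\begin{itemize}
\item If $A=I$, then $B=N$ is always a non-trivial involution, so the count is $0+4+4+2=10$, not $9$.
\item If $A=J_2(1)\oplus[1]$, the chart gives $\det(N-A)=4u(pu+qv-1)$ and $\tr(NA^{-1})+1=\tfrac12\det(N-A)$. So every admissible $B$ has spectrum $(1,-1,-1)$, and your ``degenerate'' sub-case is the whole stratum. Your count of $10$ is not supported by your argument; the paper bounds this stratum by $6+3+2=11$.
\end{itemize}

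What is needed is the per-stratum balancing of the paper's table.
\begin{itemize}
\item For $A=\operatorname{diag}(1,-1,-1)$, the determinant vanishes identically. The non-regular $B$ form a proper closed subset of $\mathrm{Inv}$ with fibres of dimension at most $2$, giving $6+3+2=11$ (equal to the generic count, not below it). The point $B=I$ forces $N=A$, giving $6+0+4=10$.
\item For $J_3(1)$ and $(1)\oplus J_2(-1)$, you must check that neither $\tr(NA^{-1})-3$ nor $\tr(NA^{-1})+1$ shares a factor with $\det(N-A)$. Then the non-regular part of $\mathcal C_A$ has dimension at most $2$ and contributes at most $7+2+2=11$.
\end{itemize}
With these checks in place, $\dim Z\le 11$ holds and your density argument goes through.
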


\begin{proof}
We prove the assertion for the branch where $A$ is non-regular; the branch for $B$
is identical after cyclically relabelling the five homologies. Write $N=BA$ for the
determinant-one lift of the involution in Proposition~\ref{prop:reduction}, and put
\[
 \mathcal C_A=\{N\in\mathrm{Inv}:\det(NA^{-1}-I)=0\}.
\]
The determinant condition is necessary because $B=NA^{-1}=H_3H_2$ has eigenvalue
$1$. The five non-regular Jordan types of $A$ and the corresponding dimensions of
the pair $(H_0,H_1)$ are
\[
\begin{array}{c|c|c|c}
 A&\dim\{(H_0,H_1)\}&\dim\mathcal C_A&
 \dim\{(N,H_2,H_3):H_3H_2=NA^{-1}\}\\ \hline
 I&4&4&6\\
 \operatorname{diag}(1,-1,-1)&6&4&\le5\\
 J_3(1)&7&3&\le4\\
 J_2(1)\oplus[1]&6&3&\le5\\
 (1)\oplus J_2(-1)&7&3&\le4
\end{array}
\]

Here is the exact calculation behind the last two columns. Write
$N=2w\tilde w/(\tilde ww)-I$ and, on the chart $w=(1,u,v)$, write
$\tilde w=(1-pu-qv,p,q)$. For $A=J_3(1)$,
$\det(N-A)$ is twice
\[
 2pu^2-puv-2pv+2quv-qv^2-2u+v,
\]
an irreducible nonzero equation; neither $\tr(NA^{-1})-3$ nor
$\tr(NA^{-1})+1$ contains it as a factor. Thus $\dim\mathcal C_A=3$, and the
non-regular sublocus of $B$ has dimension at most $2$. For $A=J_2(1)\oplus[1]$,
the determinant is $4u(pu+qv-1)$ and $\tr(NA^{-1})+1$ vanishes on it, so every
such $B$ has a factorisation fibre of dimension at most $2$, giving the bound $5$.
The other two affine charts give the same conclusions. For the non-semisimple type
$A=(1)\oplus J_2(-1)$, the triples
$(\det(N-A),\tr(NA^{-1})-3,\tr(NA^{-1})+1)$ in the three charts are
\[
\begin{aligned}
&(-4pv,\,-2(2pu+pv+2qv),\,-2(2pu+pv+2qv-2)),\\
&\bigl(4v(pu+qv-1),\,2(puv+2pu+qv^2-v-2),\,2(puv+2pu+qv^2-v)\bigr),\\
&(-4q,\,2(2pu-q-2),\,2(2pu-q)).
\end{aligned}
\]
In each line the determinant is nonzero and neither trace polynomial has a
common irreducible factor with it. Hence
$\dim\mathcal C_A=3$, its regular part has factorisation fibres of dimension $1$,
and its non-regular part has dimension at most $2$ and fibres of dimension at most
$2$, giving the bound $4$. For
$A=\operatorname{diag}(1,-1,-1)$ the determinant vanishes identically, but the
non-regular locus of $B=NA$ is a proper closed subset of the four-dimensional
$\mathrm{Inv}$; its fibres have dimension at most $2$, while the regular part has
fibres of dimension $1$. The exceptional point $B=I$ has a four-dimensional
factorisation fibre, but its inverse image is only the single point $N=A$, so this
still gives the bound $5$. The case $A=I$ gives $B=N$, a non-trivial involution, and
has dimension $4+2=6$.

The first column follows from the conjugacy-class dimensions and Lemma~\ref{lem:factorfibres}:
the classes of a regular unipotent, a subregular unipotent, a non-trivial involution,
and the non-semisimple $(1)\oplus J_2(-1)$ type have dimensions $6,4,4,6$,
respectively. Adding the last column to the second gives at most
$10,11,11,11,11$; hence the whole $A$-branch has dimension at most
$11$. The same holds for the $B$-branch. Since Proposition~\ref{prop:krull} gives
dimension at least $12$ for every component of $V_5$, no component is contained in
$Z$, proving density.
\end{proof}

\begin{corollary}[Density]\label{cor:density}
Let $Z\subset V_5$ be the locus where $A$ or $B$ fails to be regular semisimple.
The regular locus $V_5^{\mathrm{reg}}$ is dense in every irreducible component of
$V_5$.
\end{corollary}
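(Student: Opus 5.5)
The plan is to deduce this corollary directly from Proposition~\ref{prop:globaldensity} together with the lower bound of Proposition~\ref{prop:krull}. First I note that $V_5^{\mathrm{reg}}=V_5\setminus Z$ by definition. I will also check that $Z$ is Zariski closed in $V_5$. Failure of regular semisimplicity for $A$ means, by Lemma~\ref{lem:pair} and Proposition~\ref{prop:nonreg}, that $\lambda=\pm1$. This is the vanishing of the polynomial $(\tr A-3)(\tr A+1)$ in the entries of $A=H_1H_0$, since $\tr A=1+\lambda+\lambda^{-1}$. The same holds for $B=H_3H_2$, so $Z$ is the zero set of the product of these two polynomials and is closed.

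Next, let $Y$ be an irreducible component of $V_5$. By Proposition~\ref{prop:krull}, $\dim Y\ge12$. The intersection $Y\cap Z$ is a closed subset of $Z$. Each of its irreducible components lies in some irreducible component of $Z$, so it has dimension at most $11$ by Proposition~\ref{prop:globaldensity}. Hence $Y\not\subset Z$, and $Y\cap Z$ is a proper closed subset of the irreducible space $Y$.

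Finally, a nonempty Zariski-open subset of an irreducible variety is dense. Therefore $Y\setminus Z=Y\cap V_5^{\mathrm{reg}}$ is dense in $Y$, which is the claim. Working over $\mathbb{R}$, I would either carry out the argument for the complexified variety or note that the dimension statements were all made algebraically, so the Zariski statement is the intended one.

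The only step needing care is confirming that $Z$ is closed, so that "proper subset of smaller dimension" really gives an open dense complement. The trace criterion above handles this. Everything else is a restatement of Proposition~\ref{prop:globaldensity}, whose last sentence already contains the conclusion.
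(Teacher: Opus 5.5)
Your proposal is correct and follows the paper's route. The paper's proof just cites Proposition~\ref{prop:globaldensity}, whose final sentence is exactly your comparison of the bound $\dim\le 11$ on the components of $Z$ with the bound $\dim\ge 12$ from Proposition~\ref{prop:krull}. Your added check that $Z$ is Zariski closed, as the zero set of $(\tr A-3)(\tr A+1)(\tr B-3)(\tr B+1)$ for the determinant-one lifts, is left implicit in the paper and is correct.
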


\begin{proof}
This is Proposition~\ref{prop:globaldensity}.
\end{proof}
\begin{theorem}\label{thm:global}
Let $H_0,\dots,H_4$ be harmonic homologies with $H_4H_3H_2H_1H_0=-I$ bounding a
convex pentagon, with pairwise distinct centres and no four centres collinear.
Then the five mirrors preserve a
\textbf{nondegenerate} conic, and the billiard is one of constant curvature --- in
the Cayley--Klein sense, for which see Remark~\ref{rem:geom}: the signature of the
conic decides the geometry, and only the definite case is a metric billiard on the
whole domain.
\end{theorem}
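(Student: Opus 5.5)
The plan is to combine the density result with the closedness of the rank condition, and then apply the geometric nondegeneracy lemma. Let $x=(L_\bullet,O_\bullet)$ be the given configuration; it lies in $V_5$. Take any irreducible component $Y$ of $V_5$ containing $x$. By Corollary~\ref{cor:density}, $V_5^{\mathrm{reg}}\cap Y$ is dense in $Y$. By Theorem~\ref{thm:threshold}, every point of $V_5^{\mathrm{reg}}$ preserves a nonzero quadratic form, so $\rk M\le5$ there; this is also recorded in Corollary~\ref{cor:closed}. The condition $\rk M\le5$ is the vanishing of all $6\times6$ minors of $M$, hence Zariski closed. It therefore holds on $\overline{V_5^{\mathrm{reg}}\cap Y}=Y\ni x$, and $\ker M(x)\neq0$: the five homologies at $x$ preserve a nonzero form $Q$.

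One point needs care. Density was proved for the complex (or at least Zariski) variety, whereas $x$ is real. This is harmless, because $M(x)$ has real entries, so a nonzero complex kernel vector yields a nonzero real one (take its real or imaginary part). Also, $\ker M$ was defined through $QO_i\times L_i^\top=0$, i.e.\ $QO_i\propto L_i^\top$. I will check that this is equivalent to $H_i^\top QH_i=Q$: the forward direction is the first paragraph of the proof of Lemma~\ref{lem:nondeg}, and the converse is the direct computation of Lemma~\ref{lem:polar}, which goes through without nondegeneracy when $\mu_i=0$ and when $QO_i=\mu L_i^\top$ with $\mu\neq0$. So $\ker M\neq0$ really gives a common invariant form.

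Next comes nondegeneracy. The pentagon is convex and its sides are distinct. The transversal fields are admissible, $L_iO_i\neq0$, since harmonic homologies require it. The centres are pairwise distinct and no four are collinear. Lemma~\ref{lem:nondeg} then rules out $\dim K=1$ by (i) and $\dim K=2$ by (ii). The kernel cannot be everything since $Q\neq0$. Hence $Q$ is a nondegenerate conic. The invariance $QO_i=\mu_iL_i^\top$ now forces every $\mu_i\neq0$, since otherwise $O_i\in\ker Q=0$. So each centre is the pole of its mirror, and each $H_i$ is the Cayley--Klein reflection in its side for the geometry defined by $Q$. The closing sentence, that the signature decides the geometry and that only the definite case gives a metric billiard on the whole domain, is then an interpretation referring to Remark~\ref{rem:geom}; it requires no further proof.

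The main obstacle is the first step: making sure the density statement applies to the component actually containing $x$, and that the passage from $V_5^{\mathrm{reg}}$ to its closure is legitimate. The key input is Proposition~\ref{prop:globaldensity}, whose bound of $11$ on non-regular strata is compared with the bound of $12$ from Proposition~\ref{prop:krull}. I would therefore first re-check that both propositions are stated for every irreducible component, including components passing through possibly singular points such as $x$. Everything after that is formal.
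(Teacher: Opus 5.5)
Your proposal is correct and is essentially the paper's proof: the paper likewise places the configuration in $\overline{V_5^{\mathrm{reg}}}$ via Proposition~\ref{prop:globaldensity}, transfers $\rk M\le5$ from Corollary~\ref{cor:closed} by Zariski closedness, and then applies Lemma~\ref{lem:nondeg} with the hypotheses on the centres. Your extra checks fill in points the paper leaves implicit: extracting a real kernel vector from a complex one, and confirming via the direct computation of Lemma~\ref{lem:polar} that $\ker M$ consists exactly of the invariant forms even when $Q$ is degenerate.
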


\begin{proof}
By Proposition~\ref{prop:globaldensity}, the configuration lies in
$\overline{V_5^{\mathrm{reg}}}$. By Corollary~\ref{cor:closed} and the closedness of
$\rk M\le5$ there is a nonzero invariant form. Lemma~\ref{lem:nondeg} and the
hypotheses on the centres make it nondegenerate.
\end{proof}

\subsection{Classification and irreducibility at $k=5$}

Everything above was arranged so as not to need irreducibility of $V_5$. It is in
fact irreducible, and the proof is a fibration of $V_5^{\mathrm{reg}}$ over the
possible values of $A$; the same coordinates then show that
Proposition~\ref{prop:construct} is \emph{exhaustive}, so $k=5$ is completely
classified. All algebraic varieties in the irreducibility argument are
complexified; irreducibility over $\mathbb C$ implies irreducibility over
$\mathbb R$, and does not assert connectedness of the real locus. We use
geometrically irreducible generic fibres and explicitly exclude components
supported over proper subsets of the base. Equal fibre dimensions alone
would not suffice.

\begin{lemma}\label{lem:invchart}
$\mathrm{Inv}$ is the open subset $\{(w,\tilde w)\in\RP^2\times\RP^2:\tilde
ww\neq0\}$ via $N=2w\tilde w/(\tilde ww)-I$, irreducible of dimension $4$. If $A$
is regular semisimple with eigenvalues $1,\lambda^{\pm1}$ and $\lambda\neq\pm1$,
then
\[
  \mathcal C_A=\{N\in\mathrm{Inv}:\det(N-A)=0\}
\]
is a nonempty proper \textbf{irreducible} hypersurface of $\mathrm{Inv}$, of
dimension $3$.
\end{lemma}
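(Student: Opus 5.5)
The plan has two parts: first identify $\mathrm{Inv}$ with the open set $\{\tilde ww\neq0\}$, then use the earlier determinant computation to show that $\mathcal C_A$ is a single irreducible equation in that chart.

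\emph{The chart on $\mathrm{Inv}$.} A harmonic homology lifts to a matrix with $N^2=I$ and eigenvalues $(1,-1,-1)$ after the sign normalisation used in the proof of Theorem~\ref{thm:threshold}. Such an $N$ is determined by its $+1$-eigenline $\langle w\rangle$ and its $-1$-eigenplane $\ker\tilde w$, subject to $w\notin\ker\tilde w$. Conversely every $(w,\tilde w)$ with $\tilde ww\neq0$ gives $N=2w\tilde w/(\tilde ww)-I$, and $N$ is unchanged when $w$ and $\tilde w$ are rescaled independently. So $\mathrm{Inv}$ is isomorphic to a nonempty Zariski-open subset of $\mathbb P^2\times\mathbb P^2$, which is irreducible of dimension $4$.

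\emph{The equation of $\mathcal C_A$.} Work in an eigenbasis of $A$. The boxed formula in the proof of Theorem~\ref{thm:threshold} used only this rank-one form of $N$, not the relation to $B$. It gives
\[
\det(N-A)=-\frac{2(1+\lambda)(\lambda-1)}{\lambda}\cdot\frac{bw_2-cw_3}{\tilde ww}.
\]
Since $\lambda\neq\pm1$, $\mathcal C_A$ is cut out in $\mathrm{Inv}$ by the bihomogeneous bilinear form $F(w,\tilde w)=bw_2-cw_3$ of bidegree $(1,1)$.

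\emph{Nonempty and proper.} The point $w=\tilde w^\top=e_1$ satisfies $F=0$ and $\tilde ww=1$, so $\mathcal C_A$ is nonempty. The point $w=(0,1,0)^\top$, $\tilde w=(0,1,0)$ gives $F=1\neq0$ and $\tilde ww=1$, so $\mathcal C_A$ is a proper subset.

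\emph{Irreducibility.} In $\mathbb P^2\times\mathbb P^2$ the hypersurface $\{F=0\}$ is defined by the bilinear form $\tilde w\,Jw$ with $J$ the matrix having entries $J_{32}=1$, $J_{23}=-1$ and zeros elsewhere. This $J$ has rank $2$. A bilinear form $\tilde wJw$ factors as a product of linear forms only if they have bidegrees $(1,0)$ and $(0,1)$. Such a product has the shape $(\ell\cdot w)(\tilde w\cdot m)$, which corresponds to a rank-one $J$. Since $J$ has rank $2$, $F$ is irreducible, so $\{F=0\}$ is an irreducible divisor of dimension $3$.

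Its intersection with the open set $\{\tilde ww\neq0\}$ is nonempty by the point $e_1$ above. A nonempty open subset of an irreducible variety is irreducible of the same dimension. Hence $\mathcal C_A$ is an irreducible hypersurface of dimension $3$.

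The main obstacle is the irreducibility step. One has to make sure the restriction to the open set does not lose a component, and the check at $e_1$ handles this. One also has to confirm that the earlier determinant formula applies to an arbitrary $N\in\mathrm{Inv}$ and not only to $N=BA$. I expect this to be immediate, since the formula is the rank-one determinant identity, which never invokes the relation to $B$.
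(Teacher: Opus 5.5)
Your proof is correct and matches the paper's argument step for step: the same $(w,\tilde w)$ chart on $\mathrm{Inv}$, the boxed rank-one determinant formula reducing $\mathcal C_A$ to $bw_2=cw_3$, irreducibility of this $(1,1)$-form because its coefficient matrix has rank $2$, and the same witnesses $e_1$ and $e_2$ for nonemptiness and properness (you also make explicit that restricting to the nonempty open set $\{\tilde ww\neq0\}$ preserves irreducibility, which the paper leaves implicit). One small slip: the coefficient matrix of $bw_2-cw_3$ is $\operatorname{diag}(0,1,-1)$, not the antisymmetric matrix with $J_{32}=1$, $J_{23}=-1$ (that matrix gives $cw_2-bw_3$); both have rank $2$, so your conclusion is unaffected.
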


\begin{proof}
That $N$ satisfies $Nw=w$ and $N=-I$ on $\ker\tilde w$, so it is the harmonic
homology of centre $w$ and axis $\tilde w$ up to the sign that is trivial in
$\PGL_3$; every element of $\mathrm{Inv}$ has this form and $\tilde ww\neq0$ is
$LO\neq0$. So $\mathrm{Inv}$ is open in $\RP^2\times\RP^2$, irreducible of
dimension $4$.

In an eigenbasis of $A$ the boxed formula in the proof of
Theorem~\ref{thm:threshold} reads
$\det(N-A)=-2(1+\lambda)(\lambda-1)(bw_2-cw_3)/(\lambda s)$ with $\tilde
w=(a,b,c)$, so with $\lambda\neq\pm1$ and $s\neq0$ the equation $\det(N-A)=0$ is
\emph{exactly} $bw_2=cw_3$, a form of bidegree $(1,1)$ on $\RP^2\times\RP^2$. Such
a form is reducible only if it factors as (a linear form in $w$)(a linear form in
$\tilde w$) --- the only splitting available in bidegree $(1,1)$ --- that is, only
if its coefficient matrix has rank $1$. That matrix is here
$\operatorname{diag}(0,1,-1)$, of rank $2$. So $bw_2-cw_3$ is irreducible and
$\mathcal C_A$ is an irreducible hypersurface of dimension $3$. It is nonempty
($w=\tilde w=e_1$) and proper ($w=\tilde w=e_2$ gives $bw_2-cw_3=1$).
\end{proof}

\begin{theorem}[Irreducibility]\label{thm:irred}
$V_5$ is irreducible, of dimension $12$.
\end{theorem}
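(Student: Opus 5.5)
The plan is to fibre $V_5^{\mathrm{reg}}$ over the value of $A=H_1H_0$, show that the total space is irreducible of dimension $12$, and then use density (Corollary~\ref{cor:density}) to pass to all of $V_5$. By Proposition~\ref{prop:reduction}, a point of $V_5$ is the same as a quadruple $(H_0,H_1,H_2,H_3)$ with $N:=H_3H_2H_1H_0$ a nontrivial involution; $H_4=-N^{-1}$ is then determined. So $V_5$ is identified with a closed subvariety of $\mathrm{Inv}^4$.

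On the regular locus we map a configuration to the pair $(A,N)$ with $N\in\mathcal C_A$, and then to $A$ alone. The base is the set $\mathcal R$ of regular semisimple elements of $\PGL_3$ that are products of two homologies, i.e.\ those with eigenvalues $1,\lambda^{\pm1}$, $\lambda\neq\pm1$. It is the image of an open subset of the irreducible $\mathrm{Inv}^2$, hence irreducible, of dimension $8-1=7$, since the spectrum condition is one equation. Over a fixed $A$ there are three further choices:
\begin{enumerate}[label=\textup{(\alph*)},nosep]
  \item the factorisation $(H_0,H_1)$ of $A$, which by Lemma~\ref{lem:factorfibres} is the irreducible curve $q\in\mathbb{C}^\ast$;
  \item the involution $N\in\mathcal C_A$, an irreducible $3$-fold by Lemma~\ref{lem:invchart};
  \item the factorisation of $B=NA^{-1}$ into $H_3H_2$, again a $\mathbb{C}^\ast$-curve when $B$ is regular semisimple.
\end{enumerate}
This gives $7+1+3+1=12$, matching Proposition~\ref{prop:krull}.

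To make the irreducibility argument rigorous, I would take the variety $\mathcal E=\{(A,N): A\in\mathcal R,\ N\in\mathcal C_A\}$. The group $\PGL_3$ acts transitively on pairs $(A,\text{eigenbasis})$ with fixed $\lambda$, so $\mathcal E$ is locally a twisted product of a $\lambda$-line, the homogeneous space $\PGL_3/T$, and the fixed irreducible hypersurface $\{bw_2=cw_3\}$. In the eigenbasis chart of Lemma~\ref{lem:invchart} this hypersurface does not even depend on $\lambda$. Hence $\mathcal E$ is irreducible of dimension $10$. Over $\mathcal E$, the regular part of $V_5$ is a fibre product of two families of factorisation fibres, one for $A$ and one for $B$. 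By the explicit normal form in Lemma~\ref{lem:factorfibres}, each is, after a conjugation that can be chosen locally in the étale (or even Zariski, via eigenvector choices) topology, isomorphic to $\mathbb{C}^\ast$. So over the open set $\mathcal E^\circ$ where $B$ is regular semisimple, $V_5$ is a $\mathbb{C}^\ast\times\mathbb{C}^\ast$-bundle with geometrically irreducible fibres. A variety that is flat with irreducible equidimensional fibres over an irreducible base is irreducible, provided no component is supported over a proper closed subset. Here that proviso holds automatically, because every fibre over $\mathcal E^\circ$ has the same dimension and the map is open (being locally trivial).

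It remains to show that $\mathcal E^\circ$ is nonempty and that the pieces discarded along the way cannot carry extra components. The point of Theorem~\ref{thm:second} gives nonemptiness. The discarded pieces are of two kinds: the locus where $A$ is non-regular, and the locus over $\mathcal E\setminus\mathcal E^\circ$ where $B$ is non-regular. Both lie in $Z$, which has dimension $\le11$ by Proposition~\ref{prop:globaldensity}. Every component of $V_5$ has dimension $\ge12$ by Proposition~\ref{prop:krull}, so each component meets $V_5^{\mathrm{reg}}$ densely. Since $V_5^{\mathrm{reg}}$ is irreducible, $V_5$ equals its closure and is irreducible of dimension $12$.

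I expect the main obstacle to be the local triviality of the factorisation-fibre families. One must check that the parameter $q$ varies algebraically with the choice of eigenbasis, and that the monodromy of the eigenvector labelling, which swaps $\lambda\leftrightarrow\lambda^{-1}$ and $q\leftrightarrow q^{-1}$, does not split the total space. I would handle this by working on the finite cover that records an ordered eigenbasis, where everything is a genuine product. The image of an irreducible variety is irreducible, so the swap ambiguity causes no trouble once we pass back down.
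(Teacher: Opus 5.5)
Your proposal is correct and is essentially the paper's proof: the same fibration of $V_5^{\mathrm{reg}}$ through $A=H_1H_0$, built from $\mathcal F_A$, the irreducible hypersurface $\mathcal C_A$ of Lemma~\ref{lem:invchart} and $\mathcal F_B$ trivialised after an \'etale ordering of eigenvalues, followed by the same passage to $V_5$ via Propositions~\ref{prop:krull} and~\ref{prop:globaldensity}. The difference is only packaging: you fibre over the $10$-dimensional space $\mathcal E$ of pairs $(A,N)$, get its irreducibility from $\PGL_3$-homogeneity and the $\lambda$-independence of $bw_2=cw_3$, and conclude from openness of the $\mathbb{C}^\ast\times\mathbb{C}^\ast$-fibration, whereas the paper fibres over $A$ alone and excludes vertical components by the count $\dim D-5\ge7$; drop the parenthetical ``or even Zariski'', since the monodromy $\lambda\leftrightarrow\lambda^{-1}$ is genuine and the ordering needs the \'etale double cover you use at the end.
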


\begin{proof}
By Proposition~\ref{prop:globaldensity}, $V_5^{\mathrm{reg}}$ is dense in every
irreducible component of $V_5$; hence $V_5=\overline{V_5^{\mathrm{reg}}}$ and it
suffices to prove $V_5^{\mathrm{reg}}$ irreducible. Also
$\dim V_5^{\mathrm{reg}}\ge12$, by Proposition~\ref{prop:krull} and that same
density.

Let $\pi:V_5^{\mathrm{reg}}\to\PGL_3$, $\pi(H_0,\dots,H_4)=A=H_1H_0$, with image
$\mathcal A$. By Lemma~\ref{lem:pair} every element of $\mathcal A$ is regular
semisimple with eigenvalues $1,\lambda^{\pm1}$, $\lambda\neq\pm1$; let $\mathcal T$
be the set of all such. $\mathcal T$ is the image of
$\PGL_3\times(\mathbb{A}^1\setminus\{0,\pm1\})\to\PGL_3$,
$(g,\lambda)\mapsto g\operatorname{diag}(1,\lambda,\lambda^{-1})g^{-1}$, hence
irreducible; the fibres are cosets of a maximal torus up to a finite group, so
$\dim\mathcal T=8+1-2=7$.

\emph{The fibre.} Fix $A\in\mathcal A$. A point of $\pi^{-1}(A)$ is a triple
$(H_0,N,H_2)$ with $H_0\in\mathcal F_A$, $N\in\mathcal C_A$ such that
$B:=NA^{-1}$ is regular semisimple with $\mu\neq\pm1$, and $H_2\in\mathcal F_B$;
then $H_1=AH_0$, $H_3=BH_2$ and $H_4=-N^{-1}$ are determined. Indeed
$N=M=H_3H_2H_1H_0$ lies in $\mathrm{Inv}$ by Proposition~\ref{prop:reduction},
which also gives $H_4=-M^{-1}$; and $B=NA^{-1}=H_3H_2$ has eigenvalue $1$ by
Lemma~\ref{lem:pair}, which is $\det(N-A)=0$ since $\det A=1$. By
Lemma~\ref{lem:factorfibres} the fibres $\mathcal F_A$ and $\mathcal F_B$ are, in
the regular semisimple case, the punctured line $\{q\neq0\}$: irreducible of
dimension $1$. By Lemma~\ref{lem:invchart} the admissible $N$ form a nonempty open
subset of the irreducible $3$-fold $\mathcal C_A$. The family of
factorisations of $B$ is smooth of relative dimension one on this open subset:
after an etale base change ordering the three simple eigenvalues and choosing
local eigenvectors, the formula of Lemma~\ref{lem:factorfibres} identifies it
with the product with $\mathbb G_m$. Smoothness descends under this base change.
In particular the projection is open, so every component dominates the base;
its geometrically irreducible generic fibre then permits only one component.
Taking the product with $\mathcal F_A$ proves that $\pi^{-1}(A)$ is
geometrically irreducible of dimension $1+3+1=5$.

\emph{No vertical components.} Each irreducible component $D$ of
$V_5^{\mathrm{reg}}$ has dimension at least $12$, by
Propositions~\ref{prop:krull} and~\ref{prop:globaldensity}. Since all fibres
of $\pi$ have dimension $5$,
$\dim\overline{\pi(D)}\ge\dim D-5\ge7$. The irreducible base
$\mathcal T$ has dimension $7$, hence every $D$ dominates $\mathcal T$.
The preceding fibre description remains valid over an algebraic closure of
the function field of $\mathcal T$, where it is irreducible. Distinct
dominating components would give distinct components of that generic fibre.
There is therefore exactly one component, of dimension $7+5=12$.
Its closure is $V_5$.
\end{proof}

\begin{corollary}\label{cor:v5s5}
$V_5=S_5$. Every configuration of five harmonic homologies with scalar product
preserves a nonzero quadratic form --- not merely those of an open subset.
\end{corollary}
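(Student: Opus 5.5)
The plan is to combine the irreducibility of $V_5$ (Theorem~\ref{thm:irred}) with the fact that preserving a conic is a closed condition that already holds on the dense open subset $V_5^{\mathrm{reg}}$. Recall that $S_5=\{\rk M\le5\}$, where $M=M(L_\bullet,O_\bullet)$ is the $3k\times6$ matrix whose kernel is the space of invariant forms. The condition $\rk M\le5$ is the simultaneous vanishing of all $6\times6$ minors of $M$. These minors are polynomial in the coordinates $(L_i,O_i)$ and homogeneous in each scaling, so $S_5$ is Zariski closed in $V_5$.

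The argument then runs in three steps.

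\begin{enumerate}[label=\textup{(\arabic*)},nosep]
\item By Theorem~\ref{thm:threshold}, every configuration in $V_5^{\mathrm{reg}}$ preserves a nonzero quadratic form. This gives $V_5^{\mathrm{reg}}\subset S_5$, as already recorded in Corollary~\ref{cor:closed}.
\item By Corollary~\ref{cor:density}, equivalently Proposition~\ref{prop:globaldensity}, $V_5^{\mathrm{reg}}$ is dense in every irreducible component of $V_5$. By Theorem~\ref{thm:irred} there is only one component, so $\overline{V_5^{\mathrm{reg}}}=V_5$.
\item Since $S_5$ is closed and contains $V_5^{\mathrm{reg}}$, it contains its closure. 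Hence $V_5\subset S_5$, and the reverse inclusion holds by definition. Every point of $V_5$ therefore has $\ker M\neq0$, that is, it admits a nonzero invariant form.
\end{enumerate}

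Strictly, irreducibility is not needed here: density in every component already suffices. I would remark on this so that the corollary visibly does not depend on the delicate fibration argument.

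The main point needing care is compatibility of topologies. The density statements were proved for the complexified Zariski topology. The billiard configurations are real points, and a real point of $V_5$ need not be a limit of real regular points in the Euclidean topology. I would sidestep this entirely by working algebraically. The minors of $M$ vanish on the Zariski-dense subset $V_5^{\mathrm{reg}}(\mathbb C)$ of each component of $V_5(\mathbb C)$, so they vanish identically on $V_5(\mathbb C)$, and in particular at every real point.

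A second check is that Theorem~\ref{thm:threshold} is stated over $\mathbb R$ with real eigenvalues $\lambda,\mu$. I would note that its proof is purely algebraic: it uses eigenbases, rank-one formulas and Lemma~\ref{lem:pair}, none of which depend on the field. So it holds verbatim over $\mathbb C$, including when $\lambda$ is complex or lies on the unit circle, as happens for the family of Theorem~\ref{thm:main}. At a real configuration, $\ker M$ is defined over $\mathbb R$, so the form obtained at a real point can be taken real.
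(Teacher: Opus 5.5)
Your proposal is correct and follows the paper's proof: $V_5^{\mathrm{reg}}\subseteq S_5$ by Corollary~\ref{cor:closed}, $S_5=\{\rk M\le5\}$ is closed, and density of $V_5^{\mathrm{reg}}$ in every component gives $V_5=\overline{V_5^{\mathrm{reg}}}\subseteq S_5$. Your two added remarks are sound refinements that the paper leaves implicit: irreducibility is not actually needed, and the complex-Zariski versus real-point issue is handled by the vanishing of the $6\times6$ minors together with $\ker M$ being defined over $\mathbb R$.
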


\begin{proof}
Corollary~\ref{cor:closed} gives $V_5^{\mathrm{reg}}\subseteq S_5$, and
$S_5=\{\rk M\le5\}$ is closed in $V_5$, so
$V_5=\overline{V_5^{\mathrm{reg}}}\subseteq S_5\subseteq V_5$.
\end{proof}

This supersedes Corollary~\ref{cor:threshold}(i), which only placed $S_5$ among the
components of $V_5$.

\begin{theorem}[The construction is exhaustive]\label{thm:exhaustive}
Let $Q$ be nondegenerate, let $R_i=I-2n_in_i^\top Q/(n_i^\top Qn_i)$ be
$Q$-reflections with $R_4R_3R_2R_1R_0=-I$ and $R_3\neq R_4$, and put
$B=-R_2R_1R_0$. Then $B\neq I$ and there is $w\neq0$ with $Bw=w$ and
$n_4^\top Qw=0$; consequently $R_3=BR_4$. So every such tuple arises from the
construction of Proposition~\ref{prop:construct}, performed with the form $Q$.
\end{theorem}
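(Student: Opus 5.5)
The plan is to mirror the proof of Proposition~\ref{prop:construct} backwards, working throughout in $\mathrm{O}(Q)$. Each $R_i$ lies in $\mathrm{O}(Q)$ (Lemma~\ref{lem:polar}), is an involution of determinant $-1$, and satisfies $R_in_i=-n_i$ and $R_ix=x$ whenever $n_i^\top Qx=0$. Set $B=-R_2R_1R_0$. Since $\det B=(-1)(-1)^3=1$, $B$ lies in $\mathrm{SO}(Q)$. The relation $R_4R_3R_2R_1R_0=-I$ gives $R_4R_3=-(R_2R_1R_0)^{-1}=B^{-1}$, hence $B=(R_4R_3)^{-1}=R_3R_4$ and therefore $R_3=BR_4$, using $R_4^2=I$. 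This last identity is formal; the work lies in $B\neq I$ and in the existence of $w$.

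For $B\neq I$: if $B=I$, then $R_3R_4=I$, so $R_3=R_4$, which is excluded by hypothesis.

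For the fixed vector, $B=R_3R_4$ is a product of exactly two $Q$-reflections. Take the $Q$-orthogonal intersection $W=\{x: n_3^\top Qx=0,\ n_4^\top Qx=0\}$. In dimension $3$ this has dimension at least $1$, so pick $w\neq0$ in $W$. Then $R_4w=w$ and $R_3w=w$, giving $Bw=R_3R_4w=w$ and $n_4^\top Qw=0$, as required. No appeal to rationality is needed, since the statement is over the field of definition of $Q$.

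It remains to match the construction of Proposition~\ref{prop:construct}, which takes $w$ to span $\ker(B-I)$. I would show $\ker(B-I)$ is one-dimensional, so that the $w$ above is the axis used there. If $\dim\ker(B-I)\ge2$, then $B\in\mathrm{SO}(Q)$ fixes a plane pointwise. Here I would use nondegeneracy of $Q$. If that plane $\Pi$ is $Q$-nondegenerate, then $B$ preserves $\Pi^{\perp_Q}$, a line on which it acts by $\det B=1$, so $B=I$, a contradiction. If $\Pi$ is $Q$-degenerate (possible for indefinite $Q$), $B$ would be a unipotent transvection-like element. This case is the main obstacle. I would handle it by noting that $B=R_3R_4$ fixes $W$ pointwise and, when $n_3,n_4$ are independent, acts on the span of $n_3,n_4$. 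Computing that $2\times2$ block with the eigenvalue-$1$ analysis of Lemma~\ref{lem:pair} should show the fixed space equals $W$ unless $R_3=R_4$. Alternatively, and more simply, I would observe that the construction only needs some fixed vector $w$ with $n_4^\top Qw=0$, reinterpreting ``$w$ spans $\ker(B-I)$'' accordingly. With that, the tuple is recovered by choosing $n_0,n_1,n_2$, then $n_4\perp_Q w$, and setting $R_3=BR_4$, exactly as in Proposition~\ref{prop:construct}.
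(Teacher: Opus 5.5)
Your proof is correct, and for the existence of $w$ it takes a different and more elementary route than the paper.

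\textbf{The paper's route.} It applies Lemma~\ref{lem:pair} to $(R_4,R_3)$ to get $R_4BR_4=B^{-1}$. Hence $R_4$ preserves $E=\ker(B-I)$ and acts on it as an involution. It then excludes the case where $R_4$ acts by $-1$ on all of $E$: that would give $E=\langle n_4\rangle$ and $R_3w=-w$, hence $n_3\parallel n_4$ and $R_3=R_4$.

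\textbf{Your route.} You take $w$ on both mirrors, $n_3^\top Qw=n_4^\top Qw=0$; geometrically this is the common point of the mirror lines of $R_3$ and $R_4$. Such a $w\neq0$ exists by a dimension count. It is fixed by $R_3$ and by $R_4$, hence by $B=R_3R_4$. This avoids Lemma~\ref{lem:pair} and the eigenvector case split, and uses $R_3\neq R_4$ only to get $B\neq I$.

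\textbf{What each buys.} The paper works inside $E$ from the start, and $E$ is the object named in Proposition~\ref{prop:construct}. Your argument has to reconnect to $E$, which is what your last paragraph worries about. That worry dissolves in one line, and the ``transvection-like'' case you fear does not occur. Since $R_3^2=I$, $Bx=x$ iff $R_3x=R_4x$, that is, $\frac{n_3^\top Qx}{n_3^\top Qn_3}\,n_3=\frac{n_4^\top Qx}{n_4^\top Qn_4}\,n_4$. Because $R_3\neq R_4$, the vectors $n_3,n_4$ are independent, so both coefficients vanish. Hence $\ker(B-I)$ equals your set $W$, which is exactly a line because $Q$ is nondegenerate and the covectors $n_3^\top Q$, $n_4^\top Q$ are independent.

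So your $w$ genuinely spans $\ker(B-I)$ in every signature, and no reinterpretation of the construction is needed. The paper's own proof leaves this point implicit. More generally, no nonidentity element of $\mathrm{SO}(Q)$ can fix a plane pointwise, whether that plane is degenerate or not.
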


\begin{proof}
Since $(-X)^{-1}=-X^{-1}$, the relation gives
$R_4R_3=-(R_2R_1R_0)^{-1}=B^{-1}$, hence $R_3R_4=B$. If $B=I$ then $R_3=R_4$,
excluded. Apply Lemma~\ref{lem:pair} to the pair $(R_4,R_3)$, whose product is
$R_3R_4=B$: it gives $R_4BR_4=B^{-1}$ and that $B$ has eigenvalue $1$. So $R_4$
preserves $E=\ker(B-I)\neq0$ and acts on it as an involution, whence $E$ is
spanned by $\pm1$-eigenvectors of $R_4$. If all of them had eigenvalue $-1$ then
$E$ would lie in the $(-1)$-eigenline $\langle n_4\rangle$, so $E=\langle
n_4\rangle$ and $R_4w=-w$ for $w$ spanning $E$; then $Bw=w$ reads $R_3R_4w=w$,
i.e.\ $R_3w=-w$, so $n_3\parallel w\parallel n_4$ and $R_3=R_4$, excluded. Hence
some $w\in E\setminus\{0\}$ has $R_4w=w$, i.e.\ $n_4^\top Qw=0$; and
$R_3=R_3R_4R_4=BR_4$.
\end{proof}

\begin{remark}
Proposition~\ref{prop:construct} was stated for $Q=x^2+y^2+z^2$. For an indefinite
$Q$ the same construction works, with one hypothesis added: $w$ must be
non-isotropic. The step ``a reflection whose mirror contains the axis conjugates
the rotation to its inverse'' is the statement that a reflection of the
$2$-dimensional space $w^{\perp_Q}$ inverts its rotation group, and that needs
$w^{\perp_Q}$ nondegenerate. Both signatures occur among genuine billiards, by
Proposition~\ref{prop:whichgeom}.
\end{remark}

\begin{remark}[What is classical here, and what is not]\label{rem:kaleido}
Worth saying plainly, because it is the first thing a reader will suspect. In
Theorem~\ref{thm:main} the eigenvalues of $W$ are $-1$ and $e^{\pm i\theta}$ with
$\theta$ a \emph{rational} multiple of $\pi$, so $W$ has finite order. For the
spherical member the mirror group is therefore finite, and it can be identified: at
$n=5$, $j=2$ the Gram matrix of the five normals in the metric of the invariant
conic is circulant with
\[
  g_{i,i}=1,\qquad g_{i,i\pm1}=\tfrac12=\cos\tfrac\pi3,\qquad
  g_{i,i\pm2}=\tfrac{1-\sqrt5}4=\cos\tfrac{3\pi}5,
\]
and the group generated has order $60$ in $\PGL_3$, i.e.\ $120$ in
$\mathrm{O}(3)$: it is the icosahedral group $H_3$, and the five mirrors are five
of its fifteen. So the spherical member is a spherical kaleidoscope seen in
$\RP^2$, and that is also where its $\sqrt5$ comes from. What is \emph{not}
classical about it is the reason it exists at all for odd $n$: that $-I$ is trivial
in $\PGL_3$, which is what turns \cite[Prop.~1.5]{fie-ex} from a wall into a
$\mathbb{Z}/2$ obstruction one can pass.

Neither of these applies to the configuration of Theorem~\ref{thm:second}. Its
normals are rational, so each $\tr(R_aR_b)$ is rational, and by Niven's theorem the
only rational values of $\cos(q\pi)$ with $q$ rational are $0,\pm\tfrac12,\pm1$;
eight of the ten pairs fall outside that list, so those products have infinite
order and the mirror group is infinite. It is not a kaleidoscope, which is what
makes it, and not the symmetric pentagon, the answer to the second question of
Problem~1.
\end{remark}

\subsection{Development: what the relation does to the metric billiard}

When the invariant conic is nondegenerate the billiard is a metric one, and the
relation $\prod H_i=cI$ then says something sharp about the \emph{lengths} of the
orbits. The tool is the classical development of a billiard trajectory, stated here
with care, because the naive version --- ``the unfolded arc runs from $p_0$ to
$g(p_0)$'' --- is not what is true, and the difference is the whole point.

\begin{lemma}[Development]\label{lem:develop}
Let $M$ be a Riemannian surface of constant curvature, $\Omega\subset M$ a domain
bounded by totally geodesic walls, and let $\gamma:[t_0,t_k]\to\overline\Omega$ be a
billiard trajectory with bounces at $t_1<\dots<t_k$ off the walls $a_1,\dots,a_k$,
with reflections $H_{a_j}\in\mathrm{Isom}(M)$. Put $g_0=\mathrm{id}$,
$g_j=H_{a_1}\cdots H_{a_j}$, and define $\sigma(t)=g_j(\gamma(t))$ for
$t\in[t_j,t_{j+1}]$. Then $\sigma$ is a \textbf{geodesic} of $M$ of the same length
as $\gamma$. If moreover $\gamma$ is a $k$-periodic orbit, then
\[
  \sigma(t_k)=g_k\,\sigma(t_0),\qquad \sigma'(t_k)=g_k\,\sigma'(t_0),
\]
that is, $g_k$ \textbf{translates the geodesic $\sigma$ along itself} by the length
of the orbit.
\end{lemma}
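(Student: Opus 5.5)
The argument proceeds segment by segment, then checks that the pieces join smoothly at the bounces. On $[t_j,t_{j+1}]$ the trajectory $\gamma$ is a geodesic segment of $M$, because between bounces a billiard trajectory is a geodesic. Since $g_j$ is an isometry, $\sigma|_{[t_j,t_{j+1}]}=g_j\circ\gamma$ is again a geodesic segment of the same length. Summing over $j$ shows that $\sigma$ and $\gamma$ have the same length, so the length statement will follow once $\sigma$ is known to be a single geodesic.

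The key step is the junction at $t_j$, where $\sigma$ switches from $g_{j-1}\gamma$ to $g_j\gamma=g_{j-1}H_{a_j}\gamma$.

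\emph{Position.} The bounce point $\gamma(t_j)$ lies on the wall $a_j$, which is fixed pointwise by $H_{a_j}$. Hence $g_j\gamma(t_j)=g_{j-1}\gamma(t_j)$, and $\sigma$ is continuous.

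\emph{Velocity.} The reflection law says $\gamma'(t_j^+)=dH_{a_j}\bigl(\gamma'(t_j^-)\bigr)$. Because $H_{a_j}$ is an involution, this gives
\[
d g_j\bigl(\gamma'(t_j^+)\bigr)=dg_{j-1}\,dH_{a_j}\,dH_{a_j}\bigl(\gamma'(t_j^-)\bigr)=dg_{j-1}\bigl(\gamma'(t_j^-)\bigr),
\]
so the one-sided derivatives of $\sigma$ agree at $t_j$.

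A piecewise geodesic that is $C^1$ at its break points is a geodesic, by uniqueness of geodesics with given initial position and velocity. This settles the first claim.

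For periodicity, take a $k$-periodic orbit, so that $\gamma(t_k)=\gamma(t_0)$ and the outgoing velocity after the $k$-th bounce equals $\gamma'(t_0)$. One has to decide which branch of $\sigma$ is evaluated at $t_k$. Convention: read $\sigma(t_k)$ on the branch $g_k\gamma$, that is, continue the development one step past the last bounce, which is legitimate by the same junction argument. Then $\sigma(t_k)=g_k\gamma(t_k)=g_k\gamma(t_0)=g_k\sigma(t_0)$, and $\sigma'(t_k)=dg_k\gamma'(t_k^+)=dg_k\gamma'(t_0)=dg_k\,\sigma'(t_0)$.

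It follows that $g_k\circ\sigma$ and the geodesic $s\mapsto\sigma(s+(t_k-t_0))$ have the same initial point and velocity at $s=t_0$, so by uniqueness they coincide. Thus $g_k$ maps $\sigma$ onto itself, shifting the parameter by $t_k-t_0$, which is the orbit length in unit speed.

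I expect the main obstacle to be bookkeeping rather than geometry. Two things need fixing explicitly. First, the order of composition: with $g_j=H_{a_1}\cdots H_{a_j}$ and the rule $\sigma=g_j\gamma$, the new reflection acts first, which is exactly what makes the junction identity work. Second, the convention that $\sigma(t_k)$ lies on the post-bounce branch. I also need the walls to be fixed pointwise by global isometric reflections $H_{a_j}$; this holds in constant curvature for totally geodesic walls and is exactly the hypothesis in the statement.
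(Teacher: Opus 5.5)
Your proof is correct and follows essentially the paper's route: the same junction identities at each $t_j$ (the wall is fixed pointwise by $H_{a_j}$, and $H_{a_j}^2=\mathrm{id}$), the same $C^1$ gluing of geodesic pieces, and the same periodicity computation. The only difference is bookkeeping at $t_k$: the paper reads $\sigma(t_k)$ on the branch $g_{k-1}\gamma$ and pushes it through $H_{a_k}$, while you read it on the post-bounce branch $g_k\gamma$, and these agree by the junction identity. Your extra uniqueness step giving $g_k\circ\sigma(s)=\sigma(s+t_k-t_0)$ is a welcome sharpening of ``translates along itself''. However, global isometric reflections are a hypothesis of the lemma, not an automatic consequence of constant curvature; a generic flat torus, for instance, has none.
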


\begin{proof}
Continuity at $t_j$: the left value is $g_{j-1}\gamma(t_j)$, the right value is
$g_j\gamma(t_j)=g_{j-1}H_{a_j}\gamma(t_j)=g_{j-1}\gamma(t_j)$, because $\gamma(t_j)$
lies on the wall $a_j$, which $H_{a_j}$ fixes pointwise. Continuity of the velocity:
the reflection law is $\gamma'(t_j^+)=H_{a_j}\gamma'(t_j^-)$, so
$\sigma'(t_j^+)=g_{j-1}H_{a_j}\gamma'(t_j^+)=g_{j-1}H_{a_j}^2\gamma'(t_j^-)
=g_{j-1}\gamma'(t_j^-)=\sigma'(t_j^-)$. Each piece is the isometric image of a
geodesic, so $\sigma$ is a $C^1$ concatenation of geodesics, hence a geodesic; and
isometries preserve length.

For a $k$-periodic orbit, $\gamma(t_k)=\gamma(t_0)=:p_0$ and
$\gamma'(t_k^+)=\gamma'(t_0^+)$, with $p_0$ on the wall $a_k$. Hence
$\sigma(t_k)=g_{k-1}\gamma(t_k)=g_{k-1}H_{a_k}p_0=g_kp_0=g_k\sigma(t_0)$; and
$\gamma'(t_k^-)=H_{a_k}\gamma'(t_k^+)=H_{a_k}\gamma'(t_0^+)$ gives
$\sigma'(t_k)=g_{k-1}H_{a_k}\gamma'(t_0^+)=g_k\sigma'(t_0)$.
\end{proof}

The two constant-curvature cases now behave in \emph{opposite} ways, and for a
single reason: what the relation $\prod H_i=cI$, which is the identity of $\PGL_3$,
does to the metric.

\begin{theorem}[What an open family requires]\label{thm:translate}
Let $M$ be a complete simply connected surface of constant curvature and let
$\Omega\subset M$ be a domain bounded by finitely many \emph{totally geodesic}
walls. Suppose there is an open set of $k$-periodic orbits, all with the same
itinerary $a_1,\dots,a_k$. Then the isometry $g=H_{a_1}\cdots H_{a_k}$
\textbf{translates a two-parameter family of geodesics of $M$ along themselves, all
by the same positive length}.
\end{theorem}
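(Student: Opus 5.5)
The plan is to apply Lemma~\ref{lem:develop} orbit by orbit, and then to show that the two quantities it produces — the developed geodesic and the translation length — behave well as the orbit varies over the open family. First I shrink the open set $U_1\times U_2$ of Definition~\ref{def:krefl} to a connected neighbourhood of the witness $(p_0,p_1)$. On this neighbourhood the itinerary $a_1,\dots,a_k$ is fixed and every impact is interior to its wall. For each pair $(p_0,p_1)$ in it, let $\gamma_{(p_0,p_1)}$ be the $k$-periodic orbit and $\sigma_{(p_0,p_1)}$ its development.

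Since the walls and the itinerary are fixed, the isometries $g_j=H_{a_1}\cdots H_{a_j}$ do not depend on the orbit. In particular $g_k=g$ is the same isometry for every member of the family. Lemma~\ref{lem:develop} then gives $\sigma(t_k)=g\,\sigma(t_0)$ and $\sigma'(t_k)=g\,\sigma'(t_0)$. Because $M$ is complete and simply connected of constant curvature, a geodesic is determined by a point and a velocity. Hence $g$ maps the complete geodesic extending $\sigma$ onto itself, shifting its parameter by the length $\ell(p_0,p_1)$ of the orbit, and $\ell(p_0,p_1)>0$ because the orbit is nonconstant.

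\emph{Two parameters.} I would check that $(p_0,p_1)\mapsto\sigma_{(p_0,p_1)}$ is locally injective. On $[t_0,t_1]$ the curve $\sigma$ coincides with $\gamma$, since $g_0=\mathrm{id}$. So $\sigma$ contains the chord $[p_0,p_1]$, and $p_0,p_1$ are recovered as the intersections of its supporting geodesic with the walls $a_k$ and $a_1$ near the witness. These intersections are transversal by (H2)-type nondegeneracy. Hence distinct initial pairs give distinct geodesics, and the family of geodesics is genuinely two-dimensional.

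\emph{Same length.} I expect this step to be the main obstacle. I would not try to argue through the structure of $g$, because the translation length along a closed geodesic of the sphere is only defined up to multiples of $2\pi$. Instead I would use the variational characterisation of periodic billiard orbits. Because the walls are totally geodesic and $H_{a_j}$ is the isometric reflection, the reflection law is the equal-angle law. Then, by the first variation formula, the $k$-periodic orbits are exactly the critical points of the perimeter function
\[
\mathcal L(x_1,\dots,x_k)=\sum_{j=1}^{k} d(x_j,x_{j+1}),\qquad x_j\in a_j,\ x_{k+1}=x_1,
\]
restricted to the smooth manifold $a_1\times\dots\times a_k$ near the family. Consecutive points are close to the witness and lie on distinct walls, so $\mathcal L$ is smooth there.

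The map $(p_0,p_1)\mapsto(p_1,\dots,p_k)$ is continuous, in fact rational in projective coordinates by the proof of Theorem~\ref{thm:open}, and its image is a connected set of critical points of $\mathcal L$. Along any $C^1$ path in this image the derivative of $\mathcal L$ vanishes. The path is $C^1$ because the map is smooth, being a composition of exit maps and reflections. Therefore $\ell=\mathcal L\circ(\text{orbit map})$ is locally constant, hence constant on the connected neighbourhood.

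Combining the three steps: $g$ translates the two-parameter family $\{\sigma_{(p_0,p_1)}\}$ of geodesics along themselves, all by the single positive length $\ell$.
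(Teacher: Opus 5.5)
Your proof is correct. For the first two steps it follows the paper's argument and supplies details the paper omits: the paper simply asserts that distinct orbits develop to distinct geodesics, and your recovery of $p_0,p_1$ as transversal intersections of the developed geodesic with $a_k$ and $a_1$ is the justification.

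The genuine difference is the equal-length claim. The paper handles it in one sentence: a translation length is determined by $g$ and the geodesic, and the family is connected. As you anticipated, on $S^2$ this fixes the length only modulo $2\pi$. Turning it into local constancy needs continuity of the length together with discreteness of the possible residues, and that discreteness in effect comes from knowing $g=\pm I$, as in Theorem~\ref{thm:nohyp}(iii). Your first-variation argument makes no appeal to the structure of $g$ and works verbatim in all three geometries. It is in fact the mechanism the paper itself uses later, in the first paragraph of the proof of Theorem~\ref{thm:rigidambient}. Its only cost is smooth dependence of the orbit on $(p_0,p_1)$, which transversality at the impacts provides.

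One repair is needed in that step: do not write the perimeter as $\sum_j d(x_j,x_{j+1})$. On $S^2$ the domain need not lie in a hemisphere, so a billiard segment can have length $\ge\pi$. It is then not minimising, so $d$ is not its length; at length exactly $\pi$, $d$ is not smooth and the segment is not determined by its endpoints. Lying on distinct walls does not exclude this. Instead, differentiate the total length of the smooth family of closed broken geodesics produced by the orbit map. Each piece is a geodesic, so its first variation is just the boundary term. The sum $\sum_j\langle u_j^- - u_j^+,\delta p_j\rangle$ vanishes, because the equal-angle law makes $u_j^- - u_j^+$ normal to $a_j$ while $\delta p_j$ is tangent to it. This needs no minimality, as the paper remarks in the proof of Theorem~\ref{thm:rigidambient}.
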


\begin{proof}
The itinerary is fixed, so $g$ is one isometry, the same for every orbit of the
family. By Lemma~\ref{lem:develop} each orbit develops to a geodesic translated
along itself by $g$, by the length of that orbit; and the space of orbits is
$2$-dimensional, with distinct orbits developing to distinct geodesics. Lengths are
locally constant on the family, since a translation length is determined by $g$ and
the geodesic, and the family is connected.
\end{proof}

Which isometries can do that is then a question with a three-line answer in each of
the three geometries, and the answers are completely different.

\begin{theorem}[Only the sphere]\label{thm:nohyp}
In Theorem~\ref{thm:translate}:
\begin{enumerate}[label=\textup{(\roman*)},nosep]
  \item $M=\mathbb{H}^2$: \textbf{impossible.} A hyperbolic isometry translates only
        its axis, and elliptic, parabolic and identity isometries translate no
        geodesic by a positive length. So a geodesic-walled billiard in the
        hyperbolic plane is never $k$-reflective, for any $k$.
  \item $M=\mathbb{E}^2$: \textbf{impossible.} A translation by $v\neq0$ translates
        exactly the geodesics parallel to $v$, a \emph{one}-parameter family; a
        glide reflection translates only its axis; rotations and reflections
        translate none. So a flat-mirror billiard in the Euclidean plane is never
        $k$-reflective either.
  \item $M=S^2$: \textbf{possible, and only one way.} The isometries translating
        \emph{every} great circle along itself are exactly $\pm I$, with translation
        lengths $\pi$ and $2\pi$; every other isometry translates at most one great
        circle. So $g=\pm I$, and every orbit of the family has length
        congruent to $\pi$ or to $0$ modulo $2\pi$; see
        Lemma~\ref{lem:shortbranch} for when the first value is attained.
\end{enumerate}
\end{theorem}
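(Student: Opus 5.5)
The plan is to start from Theorem~\ref{thm:translate}: $g$ is a single isometry of $M$ translating a two-parameter family $\mathcal G$ of geodesics along themselves, all by one common length $\ell>0$. In each geometry I would classify isometries by type and, for each type, determine the set of geodesics $\gamma$ with $g(\gamma)=\gamma$ as a set and $g$ acting on $\gamma$ as a translation of positive length, meaning orientation-preserving along $\gamma$ with no fixed point on it. I then compare the dimension of that set with $2$. The space of geodesics of $M$ is $2$-dimensional, so it suffices to show that the invariant set has dimension at most $1$, except for $\pm I$ on $S^2$.

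For $\mathbb H^2$, I would split by type. The identity and elliptic elements fix a point, and a geodesic translated by positive length carries no fixed point, so the only candidates would be geodesics invariant without fixed points. An elliptic rotation by angle not equal to $\pi$ preserves no geodesic, and a half-turn reverses every geodesic through its centre. A parabolic element has no invariant geodesic, since an invariant geodesic would give two fixed points at infinity. A hyperbolic element fixes exactly two ideal points, so its only invariant geodesic is the axis. Orientation-reversing isometries must also be included, because $k$ may be odd. A reflection preserves the geodesics orthogonal to its mirror, but it reverses their direction. A glide reflection has a unique invariant geodesic, its axis. In every case the set is at most one geodesic, which contradicts a two-parameter family.

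For $\mathbb E^2$ the argument is the same. A translation by $v$ preserves exactly the lines parallel to $v$, a one-parameter family. Rotations have no fixed-point-free invariant lines. Reflections reverse the lines perpendicular to the mirror, and a glide reflection preserves only its axis.

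For $S^2$, write $g\in\mathrm O(3)$ and use the fact that a great circle is a plane $P$ through the origin. Take $g\ne\pm I$. If $g$ is a rotation with axis $a$, the invariant planes are $a^\perp$ and the planes containing $a$. On a plane containing $a$, $g$ acts as a reflection unless the angle is $\pi$, and even then it fixes $\pm a$, so it is no translation. Hence only $a^\perp$ qualifies. If $g$ is orientation-reversing, then $g=-r$ with $r$ a rotation. The same analysis for $-r$ shows that planes containing the axis are acted on with determinant $-1$ (a reflection). The exception is $r$ of angle $\pi$, where $-r$ is a reflection in a plane, and a reflection fixes points on each invariant circle. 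So again at most $a^\perp$ qualifies. Conversely, $I$ translates every great circle by $2\pi$ and $-I$ translates every great circle by $\pi$ (the antipodal map). Therefore $g=\pm I$, and the common orbit length is a positive multiple of the translation length, that is, congruent to $\pi$ or $0$ modulo $2\pi$. Here I would note that the developed geodesic may wrap, so the length is $\ell+2\pi m$.

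The main obstacle is the bookkeeping in the $S^2$ case and in the orientation-reversing cases, namely checking that invariant geodesics on which $g$ acts by a reflection or a rotation by $\pi$ with fixed points do not count as translations. The cleanest route is to restrict $g$ to the invariant plane $P$ and to require that $g|_P$ be a fixed-point-free rotation of the circle, or a nonzero translation in the noncompact cases.
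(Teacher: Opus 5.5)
Your proposal is correct and follows essentially the paper's own argument. You classify the isometries of each geometry, observe that apart from $\pm I$ on $S^2$ each one translates at most a one-parameter family of geodesics, and conclude by a dimension count, using the same rotation/rotary-reflection case analysis on $S^2$ that the paper gives before its alternative polynomial-invariance argument (your explicit treatment of reflections and glide reflections of $\mathbb{H}^2$, needed for odd $k$, fills in what the statement leaves implicit).

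One small repair is needed. On $S^2$ your criterion ``no fixed point on $\gamma$'' would exclude $g=I$, which translates every great circle by $2\pi$, so for closed geodesics require instead that $g|_P$ be an orientation-preserving rotation of the circle. This, and not the fixed points $\pm a$, is why the circles through the axis of a half-turn are discarded; also, such circles are invariant only when the angle is $\pi$. Likewise, write the length as $\ell+2\pi m$ rather than as a multiple of $\ell$. The case analysis is otherwise unchanged.
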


\begin{proof}
(i) and (ii) are the classification of isometries of $\mathbb{H}^2$ and
$\mathbb{E}^2$ together with the observation that a $2$-parameter family of
geodesics cannot fit inside a $1$-parameter one.

(iii) $-I$ maps every great circle to itself and shifts it by $\pi$; the identity
shifts it by $2\pi$. Conversely a rotation about an axis $a$ translates only the
great circle polar to $a$, and a rotation-reflection other than $-I$ translates at
most one. Alternatively, invariance of a great circle is a polynomial
condition on its normal. If it holds on an open set, it holds on every
normal; a linear map preserving every normal line is scalar. Orthogonality
then gives $\pm I$.
\end{proof}

\begin{remark}[The one fact behind everything]\label{rem:onefact}
Theorem~\ref{thm:nohyp} says that among the three constant-curvature geometries the
sphere is the only home for $k$-reflective billiards, and identifies the mechanism
exactly: the relation must be $\pm I$ on $S^2$. Both signs are trivial
in $\PGL_3$, whereas $-I$ is the nontrivial antipodal isometry and $+I$
is the identity isometry. That single sentence contains all of the following, which
looked like separate phenomena:
\begin{enumerate}[label=\textup{(\alph*)},nosep]
  \item odd $k$ exists at all, because $-I$ is trivial in $\PGL_3$: the obstruction
        of \cite[Prop.~1.5]{fie-ex} is a $\mathbb{Z}/2$ and not a wall
        (Theorem~\ref{thm:main});
  \item the length of every orbit of the family is an odd multiple of $\pi$,
        because that is how far $-I$ moves a great circle
        (Theorem~\ref{thm:halfcircle}), and it is $\pi$ itself whenever the domain
        satisfies both the hemisphere and total-chord bound of
        Lemma~\ref{lem:shortbranch};
  \item nothing exists in the hyperbolic plane, because there the identity of
        $\PGL_3$ is the identity isometry and moves nothing;
  \item nothing exists for flat mirrors in the Euclidean plane either, which is why
        Fierobe's Euclidean examples \emph{must} use a transversal field that is not
        the normal --- they are projective billiards and not metric ones;
  \item and in $\mathbb{RP}^n$ with $n$ odd the whole thing collapses to even $k$,
        because every real scalar matrix has positive determinant in even vector
        rank, whereas an odd number of reflections has negative determinant;
        see \cite{gonzalez-higher}.
\end{enumerate}
So the subject exists because the double cover $S^2\to\mathbb{RP}^2$ is nontrivial,
and the projective setting is not a convenience: it is where the phenomenon lives.
\end{remark}

\begin{remark}[Attribution]\label{rem:attrib}
Part (ii) is classical --- it is the standard unfolding argument for polygonal
billiards. Part (i) is elementary and may well be folklore; what is in the
literature is the stronger and harder statement for \emph{smooth} boundaries at
$k=3$, proved by Jacobi fields in \cite{bknz}, together with the classification of
the $3$-reflective spherical billiards as those bounded by three orthogonal great
circles. We claim novelty for neither (i) nor (ii); the content here is the
trichotomy of Theorem~\ref{thm:nohyp} and the identification, in
Remark~\ref{rem:onefact}, of $\pm I$ as the whole mechanism.
\end{remark}

\subsection{Global periodicity of an analytic ambient}

The constant-curvature classification above has the following consequence
for general analytic ambients. A common refocusing time does not by itself
identify the first conjugate locus, so no general roundness conclusion is
included here.

\begin{theorem}[Global periodicity of the ambient]\label{thm:rigidambient}
Let $(M^n,g)$ be connected, complete and real analytic, with $n\ge2$.
Let the walls of a billiard domain be totally geodesic hypersurfaces whose
reflections extend to global isometries. Suppose a nonempty open family of
regular $k$-periodic orbits has a common itinerary. On a connected sufficiently
small part of that family the length is a constant $\ell>0$. Let $G$ be the
composition of wall reflections in the unfolding convention. Then
$dG=\Phi_\ell$ on $SM$. Moreover:
\begin{enumerate}[label=\textup{(\roman*)},nosep]
\item $\exp_p(\ell v)=G(p)$ for every $p\in M$ and unit $v\in T_pM$;
\item $\Phi_{2\ell}=\mathrm{id}$ and $G^2=\mathrm{id}$;
\item $M$ is compact and $\operatorname{diam}M\le\ell$;
\item if $G=\mathrm{id}$ then $\Phi_\ell=\mathrm{id}$; otherwise $G$ has no
fixed point.
\end{enumerate}
\end{theorem}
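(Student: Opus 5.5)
Fix a regular orbit of the family and unfold it as in Lemma~\ref{lem:develop}; the proof is the same computation, done for an open set of initial vectors. Write $\Phi_t$ for the geodesic flow on $SM$. Let $G$ be the product of the wall reflections in the order of Lemma~\ref{lem:develop}, so that the developed geodesic $\sigma$ satisfies $\sigma(\ell)=G\sigma(0)$ and $\sigma'(\ell)=dG\,\sigma'(0)$. Lengths are locally constant: a length is a continuous function of the initial data, the itinerary is fixed, and regular orbits depend analytically on their data. Shrinking to a connected piece, every orbit therefore has the same length $\ell>0$.

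The first step is to show that the two maps $\Phi_\ell$ and $dG$ of $SM$ (with $dG$ acting on unit vectors) agree on a nonempty open subset $U\subset SM$. The initial unit vectors of the family are the points $(p_0,u_0)$ with $p_0$ in an open subset of a wall and $u_0$ in an open cone. Flowing these for small times fills an open subset of $SM$ of full dimension $2n-1$: the wall has dimension $n-1$, the directions contribute $n-1$, and time contributes $1$. Flowing by a small time $s$ commutes with both sides, since $\Phi_\ell\Phi_s=\Phi_s\Phi_\ell$, and $dG\,\Phi_s=\Phi_s\,dG$ because $G$ is an isometry. So the identity $\Phi_\ell=dG$, which holds at the initial vectors, extends to this open set $U$.

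Both maps are real analytic on the connected manifold $SM$. The metric is analytic and complete, so $\Phi_\ell$ is globally defined and analytic, and $G$ is an analytic isometry. The identity principle then gives $dG=\Phi_\ell$ on all of $SM$; one should check that $SM$ is connected for $n\ge2$, which holds because the fibres $S^{n-1}$ are connected. The properties follow from this identity.
\begin{enumerate}[label=\textup{(\roman*)},nosep]
\item Apply the identity to $(p,v)$ and take footpoints: $\exp_p(\ell v)=G(p)$.
\item Set $D=dG$. Since $\Phi_\ell(v)=D(v)$ and $\Phi_\ell(-v)=D(-v)=-D(v)$, the reversibility $\Phi_\ell(-\Phi_\ell v)=-v$ gives $\Phi_\ell(-D v)=-v$. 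The left side equals $-D^2v$, so $D^2=\mathrm{id}$ and $\Phi_{2\ell}=\mathrm{id}$. Then $G^2=\mathrm{id}$ on footpoints.
\item By (i), every unit-speed geodesic from $p$ reaches $G(p)$ at time $\ell$. Any point $q$ lies at distance $d\le\ell$ from $p$, or else it lies on a geodesic from $p$ of length in $[0,2\ell]$, by periodicity and completeness. Such a geodesic passes through $G(p)$ at time $\ell$, so $d(p,q)\le\ell$ in either case. Hence $\operatorname{diam}M\le\ell$, and $M$ is compact by Hopf--Rinow.
\item If $G=\mathrm{id}$ then $dG=\mathrm{id}$, so $\Phi_\ell=\mathrm{id}$. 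If instead $G(p)=p$ for some $p$, then $dG_p$ is an orthogonal map, so it sends a unit vector $v$ to the unit vector $dG_p v$. But $\Phi_\ell(v)$ is the velocity of a loop at $p$ and equals $dG_pv$ for all $v$; the plan is to derive $G=\mathrm{id}$ from this, so that $G\neq\mathrm{id}$ forces $G$ to be fixed-point free.
\end{enumerate}

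The main obstacle is the last part of (iv), showing that a fixed point forces $G=\mathrm{id}$. The approach I would try first is to use that the geodesics in all directions from $p$ return to $p$ at time $\ell$. Consequently $-dG_p v=\Phi_\ell(-v)$, where $-v$ traces the same loop backwards. On the other hand $dG_p(-v)=-dG_pv$, and comparing the two expressions should give $dG_pv=v$, i.e.\ $dG_p=\mathrm{id}$. An isometry fixing a point with differential the identity is the identity, so $G=\mathrm{id}$.

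I expect the reversal step to need care, and perhaps to fail in that form. In that case I would instead combine $dG_p^2=\mathrm{id}$ with the requirement that $\Phi_t$ commute with $dG$: the geodesic $t\mapsto\exp_p(tv)$ and its image under $G$, which is $t\mapsto\exp_p(t\,dG_pv)$, must match at time $\ell$. That matching should force $dG_p=\mathrm{id}$.
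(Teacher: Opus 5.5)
\paragraph{Main gap: the last claim of (iv).} Neither of your attempts proves that a fixed point forces $dG_p=\mathrm{id}$.

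The reversal computation does not give it. Honestly reversing the loop gives $\Phi_\ell(-dG_pv)=-v$, and with $\Phi_\ell=dG$ this only says $dG_p^2=\mathrm{id}$, which is part (ii) again. The relation $\Phi_\ell(-v)=-dG_pv$ you compare is just $\Phi_\ell=dG$ plus linearity of $dG_p$, so the comparison is a tautology. Also, $-v$ traces the same loop backwards only if $dG_pv=v$ already holds.

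The fallback is vacuous too. By (i), both $\exp_p(\ell v)$ and $\exp_p(\ell\,dG_pv)$ equal $G(p)=p$, so the ``matching at time $\ell$'' holds automatically.

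The missing idea is to use a $-1$ eigenvector. By (ii), $dG_p$ is an orthogonal involution. If it is not the identity, there is a unit $v$ with $dG_pv=-v$. Then $\Phi_\ell(v)=dG(v)$ reads $\gamma_v(\ell)=p$ and $\dot\gamma_v(\ell)=-v$. The geodesic $t\mapsto\gamma_v(\ell-t)$ therefore has the same initial data $(p,v)$ as $\gamma_v$, so $\gamma_v(\ell-t)=\gamma_v(t)$ for all $t$. Differentiating at $t=\ell/2$ gives $\dot\gamma_v(\ell/2)=0$, contradicting unit speed. Hence $dG_p=\mathrm{id}$, and an isometry of a connected manifold that fixes a point with identity differential is the identity.

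\paragraph{Second gap: constancy of the length.} Your justification (continuity in the data, fixed itinerary, analytic dependence) does not show the length is locally constant, since a nonconstant analytic function is also continuous. Everything after this step needs a single $\ell$ with $dG=\Phi_\ell$ on an open set, so this must be proved.

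The paper uses the first variation. For a smooth variation of a closed billiard polygon, the derivative of total length is $\sum_i\langle u_i^--u_i^+,\delta p_i\rangle$. The reflection law makes $u_i^--u_i^+$ normal to the wall, while $\delta p_i$ is tangent to it, so the derivative vanishes.

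The rest of your scheme matches the paper:
\begin{itemize}
\item the flow-box extension to an open subset of $SM$;
\item analytic continuation on the connected $SM$, and (i);
\item your reversibility proof of (ii), which is the paper's $J\Phi_tJ=\Phi_{-t}$ argument in different notation.
\end{itemize}

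\paragraph{Part (iii).} Here you take a different, more elementary route than the paper's Jacobi-field/conjugate-point argument, using $\Phi_{2\ell}=\mathrm{id}$. It works, but your stated reason is not the right one. The operative fact is that every geodesic from $p$ returns to $p$ at time $2\ell$. So a point $q=\gamma_v(t)$ with $t\in[0,2\ell]$ satisfies $d(p,q)\le\min(t,2\ell-t)\le\ell$. Passing through $G(p)$ at time $\ell$ is not what gives the bound.
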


\begin{proof}
For a smooth variation of a closed billiard polygon, the first variation
of its total length is the sum over vertices of
$\langle u_i^- -u_i^+,\delta p_i\rangle$, where $u_i^-$ and $u_i^+$ are
the incoming and outgoing unit velocities. Reflection makes their difference
normal to the wall and $\delta p_i$ tangent to it. Thus the derivative is
zero. Smooth dependence of the regular geodesic segments gives a constant
length $\ell$ on a connected local family; no minimizing assumption on the
segments is needed for this first variation.

Unfolding gives $dG(v)=\Phi_\ell(v)$ for the initial vectors of the family.
A local flow box extending the transverse impact section supplies a nonempty
open subset of $SM$ with the same identity. Both maps are analytic and $SM$
is connected for $n\ge2$, so the identity holds everywhere. Projection to
$M$ proves (i).

Let $J(v)=-v$ be velocity reversal. It commutes with $dG$ and satisfies
$J\Phi_tJ=\Phi_{-t}$. Therefore
$\Phi_\ell=dG=JdGJ=\Phi_{-\ell}$, so $\Phi_{2\ell}=\mathrm{id}$.
Also $d(G^2)=\mathrm{id}$, and projection gives $G^2=\mathrm{id}$.

Differentiating (i) in the unit initial direction produces nonzero Jacobi
fields vanishing at times $0$ and $\ell$. Hence every geodesic from $p$
has a conjugate point by time $\ell$ and cannot minimize beyond that time.
Completeness and Hopf--Rinow give $\operatorname{diam}M\le\ell$ and compactness.

Finally suppose $G(p)=p$. The orthogonal involution $dG_p$ has eigenvalues
$\pm1$. A unit $-1$ eigenvector would yield
$\gamma_v(\ell)=p$ and $\dot\gamma_v(\ell)=-v$.
The geodesics $t\mapsto\gamma_v(\ell-t)$ and $t\mapsto\gamma_v(t)$ then
have the same initial data, so are equal; differentiating at $\ell/2$
contradicts unit speed. Thus $dG_p=I$, and an isometry fixing a point with
identity derivative is the identity on a connected complete manifold.
This proves (iv).
\end{proof}

\begin{remark}[Scope of the conclusion]
For the round sphere the two possibilities are the identity and antipodal
map; on real projective space the latter induces the identity. These are
examples of the theorem, not a classification of all its analytic ambients.
To invoke Wiedersehen rigidity one would additionally need to identify the
first conjugate locus, a step not proved here. The direct constant-curvature
result of Theorem~\ref{thm:nohyp} remains independent of this issue.
\end{remark}

\begin{remark}[Three branches, and what projectivity buys]\label{rem:branches}
Theorem~\ref{thm:rigidambient} is metric: it needs the walls to be totally geodesic
for some metric. A projective billiard need not be metric, and comparing which of
the examples in these notes escape gives a trichotomy, all three cases of which
occur here:
\begin{enumerate}[label=\textup{(\arabic*)},nosep]
  \item \emph{definite invariant conic}: the associated Cayley--Klein metric is the
        round metric in projective coordinates, and its spherical lift
        satisfies the theorem. This is the symmetric pentagon,
        Theorem~\ref{thm:second}, and the higher-dimensional metric examples of \cite{gonzalez-higher}, discussed in
        that companion paper;
  \item \emph{indefinite invariant conic}: pseudo-Riemannian, and the theorem does
        not apply. This is the branch $\lambda<0$ of Theorem~\ref{thm:main}, of
        signature $(2,1)$, and it is where all but one member of the family lives;
  \item \emph{no invariant conic at all}: nothing to apply. This is Fierobe's
        Euclidean family and, by Corollary~\ref{cor:threshold}(ii), the generic
        configuration once $k\ge6$.
\end{enumerate}
In particular Remark~\ref{rem:onefact}(d) becomes a theorem rather than an
observation: Fierobe's Euclidean examples \emph{must} use a transversal field other
than the normal, because with the normal they would be metric billiards in the plane
with geodesic walls, which Theorem~\ref{thm:rigidambient} forbids. And branch (2) is
not an exception to the mechanism but the same one: the geodesics of a Cayley--Klein
geometry whose plane section meets the quadric in an ellipse are closed, of a common
projective length, with $-I$ acting as the half-turn, and that holds whatever the
signature.
\end{remark}

\begin{remark}[Why odd $k$ was the hard case]
Taking determinants, $\det G=(-1)^k$ and $\det(-I_{n+1})=(-1)^{n+1}$, so in the
metric branch $G=-I$ forces $k\equiv n+1\pmod 2$ and $G=+I$ forces $k$ even. For
$n=2$, within the definite spherical branch, this says
$k$ odd $\iff G=-I\iff L\equiv\pi\pmod{2\pi}$, the residue and not the value
being what the relation determines (Theorem~\ref{thm:halfcircle}). So the odd case is not one case among several: it is
exactly the rigid one, whereas even $k$ admits $G=+I$ and with it the Euclidean
escape of branch (3), which is where the known examples were. And at $k=5$ there is
no escape by branch (3) under the centre hypotheses of
Theorem~\ref{thm:global}, which supply a nondegenerate invariant conic; consistently, the family of Theorem~\ref{thm:main}
has exactly one spherical member for each $n$ and everything else of signature
$(2,1)$.
\end{remark}

\begin{theorem}[Spherical: the length is determined modulo $2\pi$]\label{thm:halfcircle}
Let a $k$-reflective projective billiard preserve a positive definite conic $Q$, so
that by Proposition~\ref{prop:law} it is the metric billiard of the round metric of
$Q$ with totally geodesic mirrors. If $H_{k-1}\cdots H_0=-I$ then every orbit of the
$k$-periodic family has length $L\equiv\pi\pmod{2\pi}$; if $+I$, $L\equiv0\pmod{2\pi}$.
\end{theorem}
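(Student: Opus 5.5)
The plan is to lift the projective billiard to the round sphere and combine the development lemma with the computation of how $\pm I$ moves great circles. Since $Q$ is positive definite, choose $g\in\mathrm{GL}_3(\mathbb{R})$ with $g^\top Qg=I$. In the new coordinates $Q=x^2+y^2+z^2$. By Lemma~\ref{lem:polar} each $H_i$ then lies in $\mathrm{O}(3)$, and by its determinant it is the orthogonal reflection in the plane $L_i=0$. The relation $H_{k-1}\cdots H_0=cI$ holds with $c=\pm1$: taking determinants and using orthogonality forces $c^2=1$.

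We then lift the domain to $S^2$ through the double cover $S^2\to\RP^2$. The conic $Q$ has no real points, so the bounded domain is contained in an affine chart. One of its two preimages is therefore a connected spherical polygon $\widetilde\Omega$, whose walls are arcs of the great circles $L_i=0$. Projective lines correspond to great circles, and Proposition~\ref{prop:law} shows that the projective reflection law at a wall is the reflection $H_i$. Hence the lift of a projective billiard orbit is a spherical billiard orbit in $\widetilde\Omega$, with the same impact sequence.

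The key step is to make sure the lifted orbit actually closes in $S^2$. The projective orbit closes with $\kappa>0$, as in Theorem~\ref{thm:open}. Both endpoints of the lifted orbit lie in the same sheet $\widetilde\Omega$, so the lift returns to the same point of $S^2$ with the same unit direction. It is thus a closed spherical billiard orbit of $k$ bounces, and we write $L$ for its length. To state this carefully I would fix the lift once for the whole family and note that the sheet is preserved, because every segment stays inside the chart.

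Now apply Lemma~\ref{lem:develop} with $M=S^2$. The developed curve $\sigma$ is a geodesic, that is, a unit-speed great circle, and it satisfies $\sigma(t_k)=g_k\sigma(t_0)$ and $\sigma'(t_k)=g_k\sigma'(t_0)$. Here $g_k$ is the product of the reflections in the unfolding order. Since the $H_i$ are involutions, $g_k$ is the inverse of the closure product and so also equals $cI$; a cyclic shift of the product does not change a scalar. Parametrise $\sigma(t)=\cos(t)\,p+\sin(t)\,u$ with $p,u$ orthonormal. If $c=-1$, then $\sigma(L)=-p$ and $\sigma'(L)=-u$, which forces $\cos L=-1$ and $\sin L=0$, so $L\equiv\pi\pmod{2\pi}$. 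If $c=+1$, the same comparison gives $L\equiv0\pmod{2\pi}$.

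The main obstacle is the lift step: verifying that lengths measured in the round metric of $Q$ agree with spherical lengths of the lift, and that the lift closes rather than returning to the antipodal copy. I would handle this through the convexity and boundedness of the domain in an affine chart disjoint from the (empty) real conic. Positivity of $\kappa$ then rules out the antipodal return.
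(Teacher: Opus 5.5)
Your proposal is correct and is essentially the paper's argument: develop each orbit on $S^2$ by Lemma~\ref{lem:develop}, observe that the unfolding product equals $\pm I$, and read off the shift along the great circle modulo $2\pi$. Your extra steps fill in what the paper's two-line proof leaves implicit: the lift to a single sheet of $S^2\to\RP^2$ (it is staying in that sheet, not $\kappa>0$, that rules out an antipodal return), and the cyclic-shift reconciliation of the unfolding order with $H_{k-1}\cdots H_0$.
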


\begin{proof}
Theorem~\ref{thm:nohyp}(iii), or directly: by Lemma~\ref{lem:develop} the developed
curve is a great-circle arc translated along itself by $g=\pm I$. A translation of a
closed great circle is determined only modulo its length $2\pi$.
\end{proof}

\begin{remark}[A correction, and where it came from]
An earlier version of this statement asserted $L=\pi$ outright. That is not what the
development gives: nothing in $g=-I$ distinguishes $\pi$ from $3\pi$. The error was
found on 9 August 2026 while trying to use the statement elsewhere, and the tell was
already in the file: the verifier for the seven-reflection example of \cite{gonzalez-higher} bounds the seven arcs
separately in order to conclude $L=\pi$, and that work would have been redundant had
the theorem been true as stated.
\end{remark}

The short branch is recovered by a hypothesis that costs nothing in practice.

\begin{lemma}[Short branch]\label{lem:shortbranch}
Suppose in addition that the spherical domain lies in an open hemisphere. Then every
arc of the orbit is shorter than $\pi$, and since $t\mapsto t/(2\sin(t/2))$ increases
on $(0,\pi]$ with value $\pi/2$ at the endpoint,
\[
  L\;\le\;\frac\pi2\sum_{j}\bigl|\hat p_{j+1}-\hat p_j\bigr| ,
\]
the sum being of Euclidean chords between the unit vectors of consecutive impacts.
Hence if the chords sum to less than $4$ then $L<2\pi$, and with
Theorem~\ref{thm:halfcircle} this forces $L=\pi$.
\end{lemma}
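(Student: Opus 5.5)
The plan is to work on the unit sphere model: identify the round metric of $Q$ with the standard $S^2\subset\mathbb{R}^3$, so that impacts are unit vectors $\hat p_j$ and each orbit segment between consecutive impacts is a great-circle arc of length $t_j$.

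\emph{Each arc is shorter than $\pi$.} The domain lies in an open hemisphere $\{x:\langle x,e\rangle>0\}$, and the billiard segment from $\hat p_j$ to $\hat p_{j+1}$ stays in the closure of the domain. Any great-circle arc of length $\ge\pi$ meets, or lies on the boundary of, every closed hemisphere's complement: a half great circle contains antipodal endpoints, so $\langle x,e\rangle$ takes values of both signs or vanishes somewhere. Hence each segment, being contained in an open hemisphere, has length $t_j<\pi$. The main point to check here is that impacts in the closure of the domain still sit in the open hemisphere; I will assume the closure lies in it, or argue that the impact points are interior to sides, hence in the domain's closure within the open hemisphere.

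\emph{Arc versus chord.} For $0<t<\pi$ the chord is $|\hat p_{j+1}-\hat p_j|=2\sin(t_j/2)$. The function $f(t)=t/(2\sin(t/2))$ increases on $(0,\pi]$. This follows from the derivative sign: $\tan(s)>s$ for $s=t/2\in(0,\pi/2)$. Moreover $f(\pi)=\pi/2$. Hence $t_j=f(t_j)\,|\hat p_{j+1}-\hat p_j|\le\frac\pi2|\hat p_{j+1}-\hat p_j|$, and summing over $j$ gives the displayed bound $L=\sum t_j\le\frac\pi2\sum_j|\hat p_{j+1}-\hat p_j|$.

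\emph{Conclusion.} If the chords sum to less than $4$, then $L<2\pi$. By Theorem~\ref{thm:halfcircle}, $L\equiv\pi$ or $0\pmod{2\pi}$ according to the sign of the product. Since $L>0$ and $L<2\pi$, the second case is impossible, and the first forces $L=\pi$. The step I expect to need the most care is the first one, namely the hemisphere containment of the segments; the rest is elementary calculus.
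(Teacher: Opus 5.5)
Your proof is correct and follows the paper's route: the hemisphere hypothesis gives arcs shorter than $\pi$, monotonicity of $t/(2\sin(t/2))$ gives $t\le\frac\pi2\cdot 2\sin(t/2)$, you sum over the arcs, and Theorem~\ref{thm:halfcircle} then pins $L=\pi$; your remark that the $+I$ case is vacuous under the chord hypothesis is a small addition. The one difference is how you bound the arcs: you note that a great-circle arc of length $\ge\pi$ contains an antipodal pair, whereas the paper cites that a convex subset of an open hemisphere has diameter $<\pi$, so your argument controls the trajectory segment itself and not only its endpoints; the closure caveat you flag is harmless in the paper's application, where the pentagon is compact in the chart $z>0$.
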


\begin{proof}
A convex subset of an open hemisphere has diameter $<\pi$, so each arc is minor and
the chord determines it. The stated inequality is $t\le(\pi/2)\cdot2\sin(t/2)$ on
$(0,\pi]$, which is the monotonicity just quoted. Summing and using
$\tfrac\pi2\cdot4=2\pi$ gives the conclusion.
\end{proof}

\begin{remark}
The point of trading arcs for chords is arithmetic, not geometric: an arc is a
transcendental function of rational data, whereas the square of a chord between unit
vectors is $2-2\langle u,v\rangle$ and is algebraic over $\mathbb{Q}$. So the
hypothesis of Lemma~\ref{lem:shortbranch} can be certified exactly, which the value
of the arc cannot.
\end{remark}

\begin{remark}[A sharper budget]\label{rem:sharperbudget}
The constant $\pi/2$ is the worst case and is attained only when an arc equals $\pi$.
Since $t\mapsto t/(2\sin(t/2))$ increases, it suffices to bound the largest arc: if
every arc is at most $\pi/2$ --- equivalently, if consecutive impacts have positive
inner product, which is a rational condition --- then the constant improves to
$(\pi/2)/(2\sin(\pi/4))=\pi/(2\sqrt2)$ and the hypothesis becomes
\[
  \sum_j\bigl|\hat p_{j+1}-\hat p_j\bigr|\;<\;4\sqrt2\;=\;5.656\ldots
\]
instead of $4$. This matters when one wants to \emph{iterate} a construction: each
composition adds chords, the crude budget runs out quickly, and the sharpened one is
larger by a factor $\sqrt2$ at no cost, since the extra hypothesis is checked by the
same rational data. The point came up in checking a composition that already spent
$3.081$ of the crude budget of $4$ after a single step.
\end{remark}

\begin{corollary}\label{cor:secondlength}
Every orbit of the family of Theorem~\ref{thm:second} has length exactly $\pi$.
\end{corollary}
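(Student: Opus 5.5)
The proof will feed the data of Theorem~\ref{thm:second} into Theorem~\ref{thm:halfcircle} and then into Lemma~\ref{lem:shortbranch}.

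\emph{Step 1: the metric setting.} The configuration is built with $Q=x^2+y^2+z^2$, which is positive definite. Each $R_i=I-2n_in_i^\top/(n_i\cdot n_i)$ is the $Q$-reflection whose mirror is the great circle $n_i^\perp$. By Lemma~\ref{lem:polar} and Proposition~\ref{prop:law}, the projective billiard is therefore the round-metric billiard on the spherical lift of the pentagon, with totally geodesic mirrors.

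\emph{Step 2: the residue of the length.} The product of the five reflections is $-I$; this was certified exactly in Theorem~\ref{thm:second}. Theorem~\ref{thm:halfcircle} then gives $L\equiv\pi\pmod{2\pi}$ for every orbit of the $5$-periodic family. It remains to rule out $L\ge3\pi$.

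\emph{Step 3: the upper bound.} Choose the lift of the pentagon to $S^2$ by fixing a sign of each vertex vector so that all five lie in one open hemisphere. It suffices to exhibit a rational $h$ with $h\cdot\hat V>0$ at every vertex. The vertex $(x,y,1)$ with $h=(0,0,1)$ works if no vertex is at infinity, and convexity in Theorem~\ref{thm:second} already excludes vertices at infinity. The lifted domain is then convex and lies in that open hemisphere, so Lemma~\ref{lem:shortbranch} applies:
\[
L\le\frac\pi2\sum_j\bigl|\hat p_{j+1}-\hat p_j\bigr|.
\]
For the witness orbit with $t_0=t_1=1/14$, the squared chords $2-2\langle\hat p_j,\hat p_{j+1}\rangle$ are algebraic with rational radicands. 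I would bound each chord above by a rational number, then check exactly that the five bounds sum to less than $4$. If the crude budget is too tight, I would instead use Remark~\ref{rem:sharperbudget}: check that consecutive impacts have positive inner product, then compare the chord sum with $4\sqrt2$. Either route gives $L<2\pi$ at the witness, hence $L=\pi$ there.

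\emph{Step 4: from the witness to the whole family.} By the first-variation argument in the proof of Theorem~\ref{thm:rigidambient}, the length is locally constant on the connected open family. Alternatively, $L$ is continuous on the family and takes values in the discrete set $\pi+2\pi\mathbb Z$. Either way, $L=\pi$ on the whole connected family.

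The main obstacle is the word ``every'' in the statement. The open set of Definition~\ref{def:krefl} need not be connected, and a far component could in principle have $L=3\pi$. I would close this by applying the chord bound uniformly rather than only at the witness. Every orbit stays inside the pentagon, which lies in an open hemisphere, so each arc is shorter than the spherical diameter $D$ of the lifted pentagon. That gives $L<5D$, and it would suffice to certify $D<2\pi/5$ from the vertex inner products. Failing that, I would restrict ``the family'' to the connected neighbourhood of the witness produced by Theorem~\ref{thm:open}, which is how the family is constructed there.
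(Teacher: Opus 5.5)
Your argument is the paper's: the pentagon lies in the open hemisphere $z>0$, Lemma~\ref{lem:shortbranch} applies, and exact rational bracketing of the five witness chords gives a sum below $2.972<4$, so $L<2\pi$ and Theorem~\ref{thm:halfcircle} forces $L=\pi$. Your Step~4 makes explicit the witness-to-family propagation that the paper leaves implicit.

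One correction to your fallback for ``every''. Only $L\ge3\pi$ has to be excluded, so $D<3\pi/5$ already suffices. Your threshold $D<2\pi/5$ is actually false here: the vertices $(1,-\tfrac52,1)$ and $(\tfrac74,1,1)$ have inner product $\tfrac14$ and are about $88^\circ$ apart. The uniform version still works. All ten pairwise vertex inner products are positive, and every point of the lifted pentagon is a nonnegative combination of the vertex vectors, so any two points are less than $\pi/2$ apart. Hence every orbit with this itinerary has $L<5\pi/2<3\pi$, so $L=\pi$, with no connectedness needed.
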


\begin{proof}
The pentagon lies in the open hemisphere $z>0$ of the chart, so
Lemma~\ref{lem:shortbranch} applies. The five chords of the witness orbit are bounded
above, by exact rational bracketing of the square roots involved, by numbers summing
to less than $2.972<4$.
\end{proof}

\begin{remark}
Before 9 August 2026 this corollary was asserted as a consequence of
Theorem~\ref{thm:halfcircle} alone, and the supporting decimal arc lengths came
from a floating-point computation that certifies nothing. The
statement was right and the proof was not. It is now checked over $\mathbb{Q}$ by
\path{verificadores/longitud_pi.py}.
\end{remark}

\begin{remark}[Why this is the spectrally interesting example]\label{rem:spectral}
Ivrii's conjecture, which opened these notes, matters because the second term of
the Weyl asymptotics needs the periodic set to have measure zero. A
$k$-reflective billiard with definite conic is a genuine spherical billiard whose
periodic set has positive measure, so it is a place where that mechanism can be
examined rather than assumed --- and by Remark~\ref{rem:kaleido} the two examples
behave completely differently. The symmetric pentagon is a kaleidoscope: it
unfolds to a finite group, \emph{every} orbit is periodic, and its spectrum is
explicit in spherical harmonics. The configuration of Theorem~\ref{thm:second} has
infinite mirror group, so it does not unfold to anything finite, yet it still
carries a two-parameter family of $5$-periodic orbits, all of length exactly $\pi$
by Corollary~\ref{cor:secondlength}. In the trace formula a $d$-parameter family
of periodic orbits of common length contributes with amplitude $\sim k^{d/2}$
against $k^0$ for an isolated unstable orbit, so the peak of the length spectrum at
$L=\pi$ should grow like $k$. That is a falsifiable statement about an explicit
rational configuration, and the exponent is the observable.
\end{remark}

\begin{corollary}[Classification at $k=5$]\label{cor:classification}
Every convex $5$-reflective projective billiard --- convex pentagon, pairwise
distinct centres, no four centres collinear --- is obtained thus: take a
nondegenerate $Q$ and $n_0,n_1,n_2$ with $n_i^\top Qn_i\neq0$; put
$B=-R_2R_1R_0$ and take $w\neq0$ in $\ker(B-I)$; take $n_4$ with $n_4^\top Qw=0$
and $n_4^\top Qn_4\neq0$; set $R_3=BR_4$. The parameter space is irreducible of
dimension $2+2+2+1=7$ and the moduli space has dimension
$7-\dim\mathrm{O}(Q)=4=4k-16$, in agreement with Theorem~\ref{thm:dim}. In
particular the moduli space of convex $5$-reflective projective billiards is
irreducible and unirational.
\end{corollary}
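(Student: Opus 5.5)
The plan has three parts: reduce to $Q$-reflections, apply the exhaustion result, then count parameters and check irreducibility.

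\emph{Reduction.} Start with a convex $5$-reflective billiard with pairwise distinct centres and no four centres collinear. The discussion after Proposition~\ref{prop:law} shows that the open family forces scalar monodromy, so $H_4H_3H_2H_1H_0=cI$; normalising determinants gives $c=-1$. Theorem~\ref{thm:global} then supplies a nondegenerate invariant conic $Q$. By the first step of the proof of Lemma~\ref{lem:nondeg}, $QO_i=\mu_iL_i^\top$; nondegeneracy of $Q$ forces $\mu_i\neq0$. Putting $n_i:=O_i$, each $H_i$ is therefore the $Q$-reflection $R_i=I-2n_in_i^\top Q/(n_i^\top Qn_i)$. Here $n_i^\top Qn_i=\mu_iL_iO_i\neq0$, and the overall sign is absorbed in $\PGL_3$.

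\emph{Applying the exhaustion result.} Distinct centres give $n_3\not\parallel n_4$, hence $R_3\neq R_4$. Theorem~\ref{thm:exhaustive} then yields $B\neq I$, a vector $w\in\ker(B-I)$ with $n_4^\top Qw=0$, and $R_3=BR_4$. This is exactly the recipe in the statement. For the converse direction, that the recipe always produces a valid $5$-tuple, I would adapt the argument of Proposition~\ref{prop:construct} with $Q$ in place of the Euclidean form. As the subsequent remark notes, this needs $w$ non-isotropic when $Q$ is indefinite. Since the convex locus is open and only dimension and irreducibility are at stake, that is a generic condition, and I would simply impose it as an open condition.

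\emph{Counting.} Fix $Q$ and regard everything up to scaling. The choices $n_0,n_1,n_2\in\RP^2$ contribute $2+2+2$ parameters. For generic such data $B$ is a nontrivial rotation in $\mathrm{SO}(Q)$, so $\ker(B-I)$ is a line and $w$ is determined. Then $n_4$ ranges over the pencil $\{n_4^\top Qw=0\}$, which is a $\mathbb{P}^1$ and contributes $1$. The total is $7$. The parameter space is an open subset of a $\mathbb{P}^1$-bundle over an open subset of $(\RP^2)^3$, hence irreducible and rational. Letting $Q$ vary, one quotients by $\PGL_3$. Equivalently, one fixes $Q$ and quotients by the stabiliser $\mathrm{O}(Q)$, which has dimension $3$; this gives $7-3=4=4k-16$ at $k=5$, matching Theorem~\ref{thm:dim}. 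The moduli space is the image of an irreducible rational variety, so it is irreducible and unirational.

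\emph{Main obstacle.} The hard step is reconciling the convex locus with the algebraic one. Irreducibility of the parameter space is clear, but the moduli of \emph{convex} billiards is an open semialgebraic subset of it. So "irreducible" must be read as the Zariski closure being irreducible, which follows from Theorem~\ref{thm:irred}. To see that this open subset is nonempty and Zariski dense, I would use the example of Theorem~\ref{thm:second}. One must also check that the generic $\mathrm{O}(Q)$-stabiliser is finite, so that the quotient dimension is exactly $4$; this I would take from the trivial infinitesimal stabiliser computed in Theorem~\ref{thm:dim}.
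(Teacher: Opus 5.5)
Your proposal is correct and follows the paper's own proof. Theorem~\ref{thm:global} supplies a nondegenerate invariant $Q$; the polarity relation $QO_i=\mu_iL_i^\top$ with $\mu_i\neq0$ identifies each $H_i$ with the $Q$-reflection of normal $n_i=O_i$; distinct centres give $R_3\neq R_4$ so that Theorem~\ref{thm:exhaustive} applies; and the count $2+2+2+1$ minus $\dim\mathrm{O}(Q)=3$ rests on the trivial generic stabiliser from Theorem~\ref{thm:dim}. The extra points you make explicit are refinements the paper leaves implicit, not a different route: $c=-1$ from determinants, $n_i^\top Qn_i=\mu_iL_iO_i\neq0$, non-isotropy of $w$ for the converse when $Q$ is indefinite, and Zariski density of the convex locus via Theorem~\ref{thm:second}.
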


\begin{proof}
By Theorem~\ref{thm:global} there is a nondegenerate invariant conic $Q$. The first
paragraph of the proof of Lemma~\ref{lem:nondeg} gives $QO_i=\mu_iL_i^\top$, and
$\mu_i\neq0$ because $Q$ is nondegenerate and $O_i\neq0$; so each mirror is the
polar of its centre and $H_i$ is the $Q$-reflection with normal $n_i=O_i$. The
centres being pairwise distinct gives $R_3\neq R_4$, so
Theorem~\ref{thm:exhaustive} applies and the tuple is in the image. The parameter
space is an open subset of $(\RP^2)^3\times\RP^1$, irreducible of dimension $7$;
the generic $\PGL_3$-stabiliser is trivial by Theorem~\ref{thm:dim}, and the
subgroup preserving $Q$ is $\mathrm{O}(Q)$, of dimension $3$.
\end{proof}

\begin{remark}[Scope, stated once and plainly]\label{rem:scope}
Proposition~\ref{prop:globaldensity} is an exact algebraic dimension argument: it
enumerates the five Jordan types of a non-regular $A$ --- which by
Lemma~\ref{lem:pair} exhaust the possibilities, since $A$ has determinant one and
eigenvalue one, so a spectrum $\{1,\lambda,\lambda^{-1}\}$ that is diagonalisable
whenever $\lambda\neq\pm1$ --- and bounds each stratum by a conjugacy-class
dimension plus a factorisation fibre. It does not rely on the numerical Newton and
SVD experiment of \texttt{densidad\_global.py}, which is retained only as a consistency
check and decides nothing. Thus the regular locus is dense in every component of
$V_5$, and Theorem~\ref{thm:global} is a global statement under its stated
geometric hypotheses on the centres. Those hypotheses are imposed explicitly;
convexity alone is not used as a substitute for them.
\end{remark}

\begin{remark}[Scope of the classification]\label{rem:gaps}
Proposition~\ref{prop:globaldensity} and Theorem~\ref{thm:irred} establish the
algebraic density and irreducibility assertions. The invariant form is nonzero
on all of $V_5$ by Corollary~\ref{cor:v5s5}; its nondegeneracy in the convex
classification uses the distinct-centre and non-collinearity hypotheses of
Theorem~\ref{thm:global}. No classification of arbitrary transversal fields,
or roundness theorem for every analytic Riemannian ambient, is asserted.
\end{remark}

\begin{remark}
The phenomenon is sharp, and the grouping $2+2+1$ is what makes it work. A single
harmonic homology preserves a $4$-dimensional space of conics, two of them a
$2$-dimensional one, and \emph{three generic ones preserve none}; so a common conic
for five mirrors is not dimensional slack but a consequence of the relation. The
same argument does not apply for $k=6$, where the natural grouping is $2+2+2$ and
one would need three pencils to meet, consistently with
Corollary~\ref{cor:threshold}(ii).
\end{remark}
\section*{Reproducibility and computational disclosure}
The proofs of the structural theorems are independent of numerical computation.
For the explicit rational examples, the supplementary scripts check matrix
identities, ranks and strict inequalities in exact arithmetic. Research and
manuscript development used primarily AI assistance: OpenAI chat models Sol and
Luna 5.6 and Anthropic Claude models Fable and Opus 5 supported literature,
drafting, checks, code, and revision. Jorge Lucas Gonz\'alez reviewed the
mathematics and assumes full responsibility; no AI system is an author.
\section*{Conclusion}
The paper settles the existence of primitive open families in every period
starting at three and gives the complete five-reflective algebraic and convex
classification.  The metric result identifies the spherical realization and
excludes the apparent Klein-disc alternative.  The exact certificates support
the displayed rational examples, while the structural conclusions do not rely
on numerical computation.

\end{document}